\newtheorem{thm}{Theorem}[section]
\newtheorem{lemma}[thm]{Lemma}
\newtheorem{prop}[thm]{Proposition}
\theoremstyle{definition} 
\newtheorem{rmk}[thm]{Remark}
\newtheorem{step}[thm]{Step}
\newcommand{\C}{\mathbb{C}}	
\newcommand{\F}{\mathbb{F}} 
\newcommand{\Z}{\mathbb{Z}} 
\newcommand{\map}{\rightarrow}
\newcommand{\Xtwofour}{Y}
\newcommand{\Xtwofive}{Q}
\newcommand{\Xonefour}{C}
\newcommand{\Xzerofour}{Z}
\newcommand{\hsf}{\widetilde{2}}
\newcommand{\tmf}{\mathrm{tmf}}
\newcommand{\mmf}{\mathrm{mmf}}
\DeclareMathOperator{\Sq}{Sq}
\DeclareMathOperator{\colim}{colim}
\newcommand{\lemdeg}[1]{$({#1})$}
\newcommand{\D}{\Delta}
\newcommand{\kappabar}{\overline{\kappa}}
\begin{document}

\title[$\F_2$-synthetic methods]
{Classical Stable Homotopy Groups of Spheres via $\F_2$-Synthetic Methods}

\author{Robert Burklund}
\address{Department of Mathematical Sciences, University of Copenhagen, Denmark}
\email{rb@math.ku.dk}

\author{Daniel C.\ Isaksen}
\address{Department of Mathematics\\
Wayne State University\\
Detroit, MI 48202, USA}
\email{isaksen@wayne.edu}

\author{Zhouli Xu}
\address{Department of Mathematics, UC San Diego, La Jolla, CA 92093, USA}
\email{xuzhouli@ucsd.edu}

\thanks{
The first author was supported by NSF grant DMS-2202992, by the DNRF through the Copenhagen center for Geometry and Topology (DNRF151) and by Villum Fonden through the Young Investigator Program.
The second author was supported by NSF grant DMS-1904241.
The third author was partially supported by NSF grant DMS-2105462.}

\subjclass[2010]{Primary 55Q45;
Secondary 55T15}

\keywords{stable homotopy group,
synthetic homotopy theory,
Adams spectral sequence}

\date{\today}

\begin{abstract}
We study the $\F_2$-synthetic Adams spectral sequence.  We
obtain new computational information about $\C$-motivic and classical
stable homotopy groups.
\end{abstract}

\maketitle

\section{Introduction}
\label{sctn:intro}

The topic of this manuscript is the computation of the 
stable homotopy groups 
\[
\pi_n = \colim_k \pi_{n+k}(S^k),
\]
where $\pi_{n+k}(S^k)$ is the group of based homotopy classes
of maps from a sphere $S^{n+k}$ to another sphere $S^k$.
These groups are a central object of study in topology.
For $n > 0$, each stable homotopy group is a finite group \cite{Serre53}.
Therefore, we can compute $\pi_n$ by studying its $p$-primary components
for each prime $p$.  In this manuscript, we consider only the
$2$-primary components.

Building on earlier work of many authors, 
the $2$-primary stable homotopy groups are mostly computed 
for $n \leq 90$ in \cite{IWX} \cite{IWX20a}.  However, there are a few 
gaps in those computations; for example,
$\pi_{82}$ and $\pi_{83}$ are computed only 
up to a factor of $\Z/2$.
Our work completes the computation of
both $\pi_{82}$ and $\pi_{83}$.

\begin{thm}
\label{thm:82-83}
\mbox{}
\begin{enumerate}
\item
The $2$-primary component of $\pi_{82}$ is
$(\Z/2)^6 \oplus \Z/8$.
\item
The $2$-primary component of $\pi_{83}$ is $(\Z/2)^3 \oplus (\Z/8)^2$.
\end{enumerate}
\end{thm}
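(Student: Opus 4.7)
The plan is to leverage the $\F_2$-synthetic Adams spectral sequence to pin down the remaining $\Z/2$ ambiguity in each of $\pi_{82}$ and $\pi_{83}$. Since \cite{IWX} and \cite{IWX20a} have already determined these groups up to a single factor of $\Z/2$, the residual uncertainty must reside either in a hidden $2$-extension in the associated graded of the classical Adams filtration, or in an Adams differential whose status was not settled by the techniques available there.

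The first step is to pinpoint, using the charts and tables of \cite{IWX}, \cite{IWX20a}, exactly which classes in $\Ext_{A}^{s,s+82}(\F_2,\F_2)$ and $\Ext_{A}^{s,s+83}(\F_2,\F_2)$ produce the ambiguity, and to list the candidate Adams differentials and hidden extensions between them. The second step is to pass to the $\F_2$-synthetic category: each class lifts to a synthetic class carrying extra $\tau$-adic information, and the cofiber-of-$\tau$ portion of the spectral sequence is computed by $\Ext$ over the $\C$-motivic Steenrod algebra, which is known in a much wider range and with algorithmic precision. Propagating information from the synthetic $E_\infty$-page through multiplication by $\tau$ and solving the resulting $\tau$-Bockstein problems should either force a differential that eliminates the ambiguous summand or detect a hidden $\tau$-extension that, after setting $\tau=1$, delivers the desired hidden $2$-extension.

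For each stem I expect the decisive input to be a hidden multiplicative relation — most plausibly a hidden $2$-extension, possibly mediated by an $\eta$- or $\nu$-relation — that is invisible classically but readable synthetically, and that identifies one of the would-be independent summands with twice an existing generator. Concretely, in stem $82$ this should upgrade a $\Z/4$ summand to $\Z/8$ while absorbing one $\Z/2$, producing $(\Z/2)^6 \oplus \Z/8$; in stem $83$ it should likewise merge a $\Z/4$ and a $\Z/2$ into an additional $\Z/8$, producing $(\Z/2)^3 \oplus (\Z/8)^2$.

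The main obstacle is verifying these hidden extensions rigorously. Synthetic detection is only as good as one's control of the surrounding synthetic $E_\infty$-page: to conclude that a specific $\tau$-extension exists one must exclude every competing source and target in a thick band of adjacent bidegrees, and one must rule out late Adams differentials that could otherwise kill a putative witness. Consequently the bulk of the work will consist in assembling a complete local picture of the $\F_2$-synthetic Adams spectral sequence in a neighborhood of stems $82$ and $83$, and in showing that the Massey-product and synthetic-lifting arguments available there suffice to resolve every alternative.
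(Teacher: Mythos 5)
There is a genuine gap: your proposal never isolates the actual source of the ambiguity, and the machinery you propose is exactly the machinery that was already exhausted. The unresolved $\Z/2$ in both stems comes from one specific question, namely whether $d_9(h_6 g + h_2 e_2)$ vanishes in the $83$-stem; \cite{IWX}*{Lemma 5.24} already pushed the $\C$-motivic/$\tau$-Bockstein, Massey-product, and Moss-convergence toolkit far enough to show this class survives to $E_9$, and that toolkit (as well as the machine computations of \cite{LWX}) cannot decide the last differential. So ``assembling a complete local picture of the $\F_2$-synthetic Adams spectral sequence near stems $82$--$83$ and solving $\tau$-Bockstein problems'' is not a proof strategy here; it is a restatement of the impasse. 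Your guessed mechanism is also off: the resolution is not a hidden $2$-extension merging a $\Z/4$ and a $\Z/2$ (which would leave the order unchanged), but the survival of $h_6 g + h_2 e_2$, which enlarges both $\pi_{82}$ and $\pi_{83}$ relative to the alternative.

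What is actually needed is a construction that produces a homotopy class forced to be detected by $h_6 g + h_2 e_2$, in the style of the Barratt--Jones--Mahowald argument for $h_5^2$ \cite{BJM}. Concretely, the paper builds a $6$-cell synthetic complex $X$ (Section \ref{subsctn:complex}) whose attaching maps encode the relations $\lambda\mu\cdot\lambda^2\eta=0$, $2\theta_5=0$, $2\alpha=0$, $\nu\cdot\lambda\beta=0$ from Lemmas \ref{lem:mu}, \ref{lem:beta}, \ref{lem:alpha}, then constructs $f\colon S^{83,5}\to X$ and $g\colon X\to S^{0,0}$ from the Toda brackets in Table \ref{tab:Toda}, and identifies the composite $gf$ modulo $\lambda$ using the splitting of $X/\lambda$ together with Adams's decomposition of $\Sq^{64}$ by secondary operations and its nonvanishing on the cofiber of $\theta_5$ (Lemma \ref{lem:Sq64}); this pins down the coefficient $h_6$ and yields Theorem \ref{thm:perm-h6g+h2e2}. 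The synthetic category enters not as a bookkeeping refinement but because the needed elements ($\hsf$, $\lambda$-torsion classes, brackets like $\langle\nu,\lambda\mu,\lambda^2\eta\rangle$) only exist there. None of these ingredients — the cell complex, the specific bracket inputs, or the secondary-operation detection of $h_6$ — appear in your outline, and without something playing their role the argument does not close.
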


The proof of Theorem \ref{thm:82-83} is to show that a particular
element $h_6 g + h_2 e_2$ in the Adams spectral sequence is a 
permanent cycle.  
We construct a certain 6-cell complex that allows us to
identify a homotopy class that must be detected by $h_6 g + h_2 e_2$.
The structure of the argument is similar to the proof that the
Kervaire class $h_5^2$ survives \cite{BJM}, but the detailed supporting
computations are different.

From \cite{IWX}*{Lemma 5.24}, we already know that $h_6 g + h_2 e_2$ survives to the
$E_9$-page of the Adams spectral sequence, but there is a possible
non-zero value for $d_9(h_6 g + h_2 e_2)$.

\begin{rmk}
  Mark Behrens suggested an argument using height $2$ chromatic
  homotopy theory that would rule out this differential, which
  would provide
  a completely different argument that $h_6 g + h_2 e_2$ is a 
  permanent cycle.
\end{rmk}

A distinguishing feature of our argument is the use of the
$\F_2$-synthetic Adams spectral sequence.
Motivic stable homotopy theory is useful for classical computations because it is an enhancement of classical stable homotopy theory.  This can be made precise using the language of deformations \cite{GWX17} \cite{GIKR18}  \cite{Pstragowski18}.  
In this manuscript, we use $\F_2$-synthetic stable
homotopy theory, which is a different enhancement of classical stable homotopy theory and provides access to different pieces of information.
Here we are using $\F_2$ as shorthand for the
Eilenberg-Mac Lane spectrum $H\F_2$.

In addition to the study of $h_6 g + h_2 e_2$, this manuscript also
includes the proofs of a number of less prominent computations
about stable homotopy groups.  
These results fill in several gaps left by the motivic techniques
of \cite{IWX}.
Individually, these facts have little direct connection,
but their proofs all use the same basic idea of exploiting the additional structure of $\F_2$-synthetic stable homotopy theory.
Tables \ref{tab:diff} and \ref{tab:extn} summarize the precise
results that we prove.

\begin{thm}
\label{thm:diff-extn}
\mbox{}
\begin{enumerate}
\item
Table \ref{tab:diff} lists some differentials in the $\F_2$-synthetic
Adams spectral sequence.
\item
Table \ref{tab:extn} lists some hidden extensions in the $\F_2$-synthetic
Adams spectral sequence.
\end{enumerate}
\end{thm}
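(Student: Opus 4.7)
The plan is to treat Tables \ref{tab:diff} and \ref{tab:extn} as an enumeration of independent computational claims, each to be established by a synthetic-Adams argument tailored to that entry but drawing on a common toolkit. The foundational observation is that the $\F_2$-synthetic Adams spectral sequence refines the classical Adams spectral sequence by an extra grading by synthetic weight: reducing modulo the synthetic parameter $\tau$ recovers the classical $E_2$-page, while inverting $\tau$ recovers the classical Adams spectral sequence itself. Consequently, every classical differential and extension recorded in \cite{IWX} and \cite{IWX20a} imposes a first-pass constraint on the synthetic unknowns, and what remains to be pinned down is the distribution of the answer across synthetic weights.

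For the differentials in Table \ref{tab:diff}, I would lean on two mechanisms. The first is multiplicative propagation: if a source class $x$ can be written as $y \cdot z$ where $d_r(y)$ is known and $z$ is a permanent cycle (or carries a known differential), the Leibniz rule fixes $d_r(x)$, and in the synthetic setting one additionally tracks the weight of each factor, which often kills would-be terms. The second is Moss's theorem, which converts Massey product representatives into Toda brackets whose indeterminacies are typically narrower synthetically than classically, so that the differential on the Massey product can be identified. When neither mechanism suffices, the remaining option is to realize a small synthetic cell complex whose synthetic homotopy is fully computable and to detect the relevant class by a lift into (or map from) that complex; this is precisely the strategy that settles $h_6 g + h_2 e_2$ in the proof of Theorem \ref{thm:82-83}, and I expect the leftover entries of Table \ref{tab:diff} to require scaled-down variants of the same construction.

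For the hidden extensions in Table \ref{tab:extn}, the main tool is that the synthetic weight filtration promotes some classical hidden extensions to honest multiplications on the synthetic $E_\infty$-page in the appropriate weight, where they can be read off directly from the algebra of Ext. When this is unavailable, the hidden extension in question is often the image of a shorter synthetic extension computable by a Toda bracket argument or by the $\tmf$- and $\mmf$-module structures on the sphere; these module structures are particularly effective synthetically because the detection maps do not lose weight information. The main obstacle throughout will be a handful of entries in regions where the classical Adams spectral sequence has no remaining structural leverage—differentials $d_r$ with $r$ large enough that the usual multiplicative and bracket arguments stall. For those, I anticipate needing bespoke synthetic cell complexes of the same flavor as the 6-cell complex used for $h_6 g + h_2 e_2$, with their cell structure engineered precisely to realize a Toda bracket that forces the desired answer.
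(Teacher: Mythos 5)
Your proposal is a strategy outline rather than a proof. Theorem \ref{thm:diff-extn} is a summary statement whose proof is the aggregate of the individual propositions and lemmas cited in the last column of Tables \ref{tab:diff} and \ref{tab:extn}, and your plan does not carry out (or even engage with) any of those specific computations. More seriously, the toolkit you list omits the mechanism that does most of the actual work. For the differentials $d_5(Ph_5e_0)$, $d_5(A')$, and $d_6(tQ_2)$, the argument is not Leibniz propagation from a factorization of the source, nor Moss's theorem applied to the source: it is the observation that a specified $\lambda$-power-torsion class on the $E_\infty$-page (e.g.\ $\lambda^5 il$, $\lambda^4 Mh_1e_0^2$, $\lambda^6 MP\D h_1 d_0$) would detect a homotopy class that is forced to vanish --- a product such as $\hsf\eta$, or a $\lambda$-multiple killed because every candidate detecting class is seen by $\tmf$ --- so the class must be hit by some differential, after which all but one possible source is eliminated by comparison with $\C$-motivic data, $S^{0,0}/\tau$, $\mmf$, or previously established permanent cycles. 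Proposition \ref{prop:perm-P^2h6d0} uses yet another device absent from your plan: a Toda bracket computed in the truncated object $S^{0,0}/\lambda^5$ via Moss, converted there into a hidden $\hsf$ extension, and then pulled back along $S^{0,0}\map S^{0,0}/\lambda^5$. Your proposal has no analogue of this ``must-be-hit'' bookkeeping with $\lambda$-torsion, which is the distinctive synthetic leverage the paper exploits.

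On the extension side, several entries are purely synthetic: the hidden $\nu$ extension from $M\D h_1h_3$ to $\lambda^2 Mh_1e_0^2$ has no classical or motivic counterpart because its target is $\lambda$-torsion, so your mechanism of ``promoting classical hidden extensions to honest multiplications in the right weight'' cannot produce it; the paper instead shuffles specific Toda brackets such as $\langle\eta,\hsf^2\kappabar_2,\hsf\rangle$ (detected by $Mh_1$) and $\langle\nu,\eta,\gamma\rangle$, and rules out alternative targets by checking which candidates are $\lambda$-free and comparing with $S^{0,0}/\tau$ and $\mmf$. Two of your expectations are also off the mark: no entry of either table besides $h_6g+h_2e_2$ requires a bespoke cell complex (every other entry is settled by bracket shuffles, torsion constraints, and comparison functors), and the entry $d_5(p_1+h_0^2h_3h_6)$ is not proved in the paper at all --- it is quoted from the machine computation \cite{LWX}. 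As written, your proposal would not establish any single line of the tables.
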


Not all of the results in Tables \ref{tab:diff} and
\ref{tab:extn} are new.  In a few cases, we have provided simpler
proofs of facts that were previously known to be true by more intricate
arguments. 
Very recently announced machine computations \cite{LWX}
independently verify some of our results.  However, 
current machine results do not establish that $h_6 g + h_2 e_2$ 
is a permanent cycle, so our argument regarding that particular
element is the only known proof.

Most of the notation in Tables \ref{tab:diff} and \ref{tab:extn}
is the same as in \cite{IWX}.  The exceptions are $\lambda$,
which is the $\F_2$-synthetic parameter analogous to the motivic parameter
$\tau$; and $\hsf$, which is our notation for an $\F_2$-synthetic homotopy
class such that $\lambda \hsf = 2$.  See Sections 
\ref{subsctn:syn-Adams} and \ref{subsctn:notation}
below for a more careful discussion of notation.

\begin{longtable}{lllll}
\caption{Some Adams differentials \label{tab:diff}
} \\
\toprule
$(s,f,d)$ & element & $r$ & $d_r$ & proof \\
\midrule \endfirsthead
\caption[]{Some Adams differentials} \\
\toprule
$(s,f,d)$ & element & $r$ & $d_r$ & proof \\
\midrule \endhead
\bottomrule \endfoot
$(56, 9, 9)$ & $P h_5 e_0$ & $5$ & $\lambda^4 i l$ & Proposition \ref{prop:d5-Ph5e0} \\
$(61, 6, 6)$ & $A'$ & $5$ & $\lambda^4 M h_1 d_0$ & Proposition \ref{prop:d5-A'} \\
$(70, 4, 4)$ & $p_1 + h_0^2 h_3 h_6$ & $5$ & 
$\lambda^4 h_2^2 C' + \lambda^4 h_3(\D e_1 + C_0)$ & 
\cite{LWX} \\
$(83, 5, 5)$ & $h_6 g + h_2 e_2$ & $9$ & $0$ & Theorem \ref{thm:perm-h6g+h2e2} \\
$(93, 13, 13)$ & $P^2 h_6 d_0$ & $6$ & $0$ & Proposition \ref{prop:perm-P^2h6d0} \\
$(93, 13, 13)$ & $t Q_2$ & $6$ & $\lambda^5 M P \D h_1 d_0$ & Proposition \ref{prop:d6-tQ2} \\
\end{longtable}

\begin{longtable}{lllll}
\caption{Some hidden extensions \label{tab:extn}
} \\
\toprule
$(s,f,d)$ & source & type & target & proof \\
\midrule \endfirsthead
\caption[]{Some hidden extensions} \\
\toprule
$(s,f,d)$ & source & type & target & proof \\
\midrule \endhead
\bottomrule \endfoot
$(71, 5, 5)$ & $h_1 p_1$ & $\nu$ & $0$ & Proposition \ref{prop:nu-h1p1} \\
$(74, 6, 6)$ & $h_3 n_1$ & $\hsf$ & $\lambda x_{74,8}$ & Proposition \ref{prop:h-h3n1} \\
$(77, 7, 7)$ & $m_1$ & $\nu$ & $0$ & Proposition \ref{prop:nu-m1} \\
$(77, 7, 5)$ & $\lambda^2 m_1$ & $\eta$ & $0$ & Proposition \ref{prop:eta-L^2m1} \\
$(77, 12, 12)$ & $M \D h_1 h_3$ & $\nu$ & $\lambda^2 M h_1 e_0^2$ & Proposition \ref{prop:nu-MDh1h3} \\
$(79, 7, 7)$ & $h_2 x_{76,6}$ & $\eta$ & $0$ & Lemma \ref{lem:nu,eta,h2x76,6} \\
$(81, 12, 12)$ & $\D^2 p$ & $\nu$ & $0$ & Proposition \ref{prop:nu-D^2p} \\
$(84, 10, 10)$ & $P x_{76,6}$ & $\hsf$ & $0$ & Proposition \ref{prop:h-Px76,6} \\
\end{longtable}

In order to carry out the computations in Tables \ref{tab:diff} and \ref{tab:extn}, we need a number of technical supporting details,
including some Toda bracket computations as well as some careful
choices of stable homotopy elements.  

\begin{thm}
\label{thm:defn-Toda}
\mbox{}
\begin{enumerate}
\item
There exist elements in the $\F_2$-synthetic stable homotopy groups with the properties shown in Table \ref{tab:elements}.
\item
Table \ref{tab:Toda} lists some Toda brackets in the $\F_2$-synthetic stable homotopy groups.
\end{enumerate}
\end{thm}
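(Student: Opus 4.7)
The plan is to build each entry in Tables \ref{tab:elements} and \ref{tab:Toda} from the $\F_2$-synthetic Adams spectral sequence, exploiting the additional $\lambda$-filtration that is invisible in the classical Adams setup. The overall strategy parallels that of \cite{IWX}: identify candidate classes on an appropriate $E_r$-page, use a synthetic version of Moss's convergence theorem to relate Massey products there to Toda brackets in the $\F_2$-synthetic stable homotopy groups, and control indeterminacies using $\lambda$-multiples together with previously established differentials.

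For part (1), for each element in Table \ref{tab:elements} I would first pick a non-zero $E_\infty$-class in the specified $(s,f,d)$-tridegree and then choose a lift to the $\F_2$-synthetic homotopy groups. Any two lifts differ by a $\lambda$-divisible class, so the conditions listed in the table---typically a product relation, a Toda bracket membership, or a specification modulo $\lambda$-multiples---are arranged to cut the ambiguity down to a canonical representative. For elements whose definition is by Toda bracket membership, existence is supplied by part (2); for elements characterized multiplicatively, existence is immediate from the detection statement and the $\F_2$-synthetic module structure.

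For part (2), the main tool is Moss's convergence theorem for the $\F_2$-synthetic Adams spectral sequence: a non-zero Massey product $\langle a,b,c\rangle$ on $E_r$ with no crossing differentials detects a Toda bracket $\langle \alpha,\beta,\gamma\rangle$ modulo the usual indeterminacy. For each bracket I would (i) identify the underlying Massey product using the Adams differentials of \cite{IWX}, \cite{IWX20a}, and Table \ref{tab:diff}; (ii) apply Moss to obtain the $\F_2$-synthetic Toda bracket; (iii) compute the indeterminacy by listing classes in the neighbouring bidegrees; and (iv) if the tabulated value requires a specific representative modulo $\lambda$-torsion, pin it down using the $\lambda$-filtration and known products by low-stem classes.

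The main obstacle is the indeterminacy analysis. In the synthetic setting both the Massey product and Toda bracket indeterminacies acquire $\lambda$-torsion contributions invisible classically, and ruling out crossing differentials requires control of $d_s$ for $s < r$ in nearby tridegrees. In the relevant stems---roughly $70$ through $90$---the synthetic $E_\infty$-page is sufficiently crowded that I expect to need auxiliary inputs such as pushforward to $\tmf$ to kill candidate error classes, multiplications by $h_1$, $h_2$, or $\hsf$ to constrain ambiguities, and the new differentials of Table \ref{tab:diff} to certify that the relevant Massey products are well-formed. Once the indeterminacies are settled for each bracket, the existence claims of part (1) follow either by construction or by direct appeal to the corresponding entry of Table \ref{tab:Toda}.
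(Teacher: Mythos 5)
There is a genuine gap: what you have written is a methodology outline, not a proof. Theorem \ref{thm:defn-Toda} is, in the paper, a summary of a family of specific lemmas (Lemmas \ref{lem:mu}, \ref{lem:beta}, \ref{lem:alpha} for Table \ref{tab:elements}; Lemmas \ref{lem:eta,h^2kappabar2,h}, \ref{lem:h^2kappabar2,h,Dh1h3}, \ref{lem:h,L^2eta,Lmu}, \ref{lem:L^2eta,Lmu,nu}, \ref{lem:nu,eta,h2x76,6} for Table \ref{tab:Toda}), each of which requires its own computation; your proposal does not carry out any of them. Moreover, the uniform ``identify a Massey product and apply Moss'' route you describe does not actually cover most of the table: in the paper only $\langle \eta, \hsf^2\kappabar_2,\hsf\rangle$ (via the specific Massey product $Mh_1=\langle h_1,h_0^2g_2,h_0\rangle$) and $\langle\nu,\lambda\mu,\lambda^2\eta\rangle$ (via $d_3(\lambda x_1)=\lambda^3 h_1 m_1$) are handled by Moss. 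The bracket $\langle\lambda\mu,\lambda^2\eta,\hsf\rangle$ is computed by comparison with the corresponding motivic bracket together with a hidden-extension analysis of the indeterminacy, and $\langle\hsf^2\kappabar_2,\hsf,\{\D h_1h_3\}\rangle$ and $\langle\nu,\eta,\{h_2x_{76,6}\}\rangle$ are computed by shuffling against known products and hidden $\nu$-extensions, not by locating a Massey product on some $E_r$-page. Your plan gives no indication of which of these inputs would be used where, and for several entries the Massey-product datum you propose to start from does not exist in usable form.

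For Table \ref{tab:elements} the difficulty is also mislocated. Your claim that ``any two lifts differ by a $\lambda$-divisible class'' is false: two classes detected by the same $E_\infty$-element differ by classes of higher Adams filtration, which need not be $\lambda$-divisible, and this is precisely why the paper must first prove the hidden-extension statements (Propositions \ref{prop:nu-m1} and \ref{prop:eta-L^2m1}) and then argue that a \emph{single} choice of $\mu$ satisfies both $\nu\mu=0$ and $\lambda^2\eta\mu=0$; this simultaneity is nontrivial because there is a non-hidden $\eta$-multiplication on $\lambda^7 M\D h_1h_3$ in higher filtration, forcing the correction term $\mu''$ in the paper's argument. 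Similarly, for $\beta$ and $\alpha$ the substantive claims are that \emph{every} element of the relevant bracket has the form $\lambda^3\beta$ (resp.\ $\lambda^3\alpha+\lambda\kappabar\theta_5$) with $\lambda\nu\beta=0$ (resp.\ $2\alpha=0$); the last relation requires the shuffle $2\bigl(\lambda\kappabar\theta_5+\langle\nu,\lambda\mu,\lambda^2\eta\rangle\bigr)=\lambda\nu\langle\lambda\mu,\lambda^2\eta,\hsf\rangle$ together with $2\theta_5=0$, and the ``every element'' statements require filtration-by-filtration analysis of the indeterminacy (including the low-filtration class $h_2x_{76,6}$, which your generic indeterminacy step would not automatically control). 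As written, the proposal would not produce the properties recorded in the tables, only a framework in which one might try to prove them.
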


\begin{longtable}{llll}
\caption{Some elements in the $\F_2$-synthetic stable
homotopy groups \label{tab:elements}
} \\
\toprule
$(s,d)$ & element & properties & defined in \\
\midrule \endfirsthead
\caption[]{Some elements in the $\F_2$-synthetic stable
homotopy groups} \\
\toprule
$(s,d)$ & element & properties & defined in \\
\midrule \endhead
\bottomrule \endfoot
$(77, 7)$ & $\mu$ & detected by $m_1$ & Lemma \ref{lem:mu} \\
& & $\lambda^2 \eta \mu = 0$\\
& &  $\nu \mu = 0$ \\
$(79, 8)$ & $\beta$ & $\lambda \nu \beta = 0$ & Lemma \ref{lem:beta} \\
& & $\lambda^3 \beta \in \langle \hsf, \lambda^2 \eta, \lambda \mu \rangle$ \\
$(82, 8)$ & $\alpha$ & $2 \alpha = 0$ & Lemma \ref{lem:alpha} \\
& & $\lambda^3 \alpha + \lambda \kappabar \theta_5 \in \langle \lambda^2 \eta, \lambda \mu, \nu \rangle$ \\
\end{longtable}

\begin{longtable}{lllll}
\caption{Some Toda brackets \label{tab:Toda}
} \\
\toprule
$(s,d)$ & bracket & detected by & indeterminacy & proof \\
\midrule \endfirsthead
\caption[]{Some Toda brackets} \\
\toprule
$(s,d)$ & bracket & detected by & indeterminacy & proof \\
\midrule \endhead
\bottomrule \endfoot
$(46, 7)$ & $\langle \eta, \hsf^2 \kappabar_2, \hsf \rangle$ & $M h_1$ & $\lambda^4 d_0 l$ & Lemma \ref{lem:eta,h^2kappabar2,h} \\
$(77, 12)$ & $\langle \hsf^2 \kappabar_2, \hsf, \{\D h_1 h_3\} \rangle$
& $M \D h_1 h_3$ & $0$ & Lemma \ref{lem:h^2kappabar2,h,Dh1h3} \\
$(79, 5)$ & $\langle \lambda \mu, \lambda^2 \eta, \hsf \rangle$ & $0$ or $\lambda^9 M e_0^2$ & $\lambda^3 h_0 h_2 x_{76,6}$ & Lemma \ref{lem:h,L^2eta,Lmu} \\
$(82, 5)$ & $\langle \nu, \lambda \mu, \lambda^2 \eta \rangle$ & $\lambda h_5^2 g$ & ? & Lemma \ref{lem:L^2eta,Lmu,nu} \\
$(84, 8)$ & $\langle \nu, \eta, \{h_2 x_{76,6} \} \rangle$ & $\lambda^2 P x_{76,6}$ & ? &  Lemma \ref{lem:nu,eta,h2x76,6} \\
\end{longtable}

\subsection{The synthetic Adams spectral sequence}
\label{subsctn:syn-Adams}

For simplicity, we use the more concise term
``synthetic" instead of ``$\F_2$-synthetic".  
The only two synthetic homotopy theories under consideration in this
manuscript are the $\F_2$-synthetic and $BP$-synthetic ones.
To avoid confusion, we use the term ``motivic" to describe the latter homotopy theory.

We have made no particular effort to write a self-contained document.  Rather, this manuscript is a companion to \cite{IWX}, and it is just one part of a larger program to compute stable homotopy groups in a range.
We refer to \cite{BHS}*{Section 9 and Appendix A} \cite{Pstragowski18} 
for foundational material on the construction and properties of
synthetic spectra and synthetic Adams spectral sequences.
Also, \cite{I20}*{Chapter 6} foreshadows many of these ideas in the motivic setting.

We provide a short overview of the structure of the synthetic Adams spectral sequence from a computational perspective.
The synthetic Adams $E_2$-page is tri-graded by stem, Adams filtration, and synthetic degree.  The first two gradings are the familiar ones from the classical Adams spectral sequence.

The synthetic homology of a point is equal to
$\F_2[\lambda]$, where the synthetic degree of $\lambda$ is $-1$.
We use the letter $\lambda$ for the synthetic parameter to distinguish it from the motivic parameter $\tau$.  In the long run, we foresee computations that possess two parameters simultaneously, and this notation is convenient for that purpose.

The synthetic Adams $E_2$-page is a free $\F_2[\lambda]$-module whose
generators are in one-to-one correspondence with an $\F_2$-basis for
the classical Adams $E_2$-page.
For each element $x$ of the classical Adams spectral sequence in 
stem $s$ and filtration $f$, there is an $\F_2[\lambda]$-module
generator of the synthetic Adams spectral sequence in stem $s$, filtration $f$, and synthetic degree $f$.  We use the same letter $x$ to represent this generator of the synthetic Adams $E_2$-page.

Different authors have chosen different conventions regarding
the synthetic degree.  
Our choice is based on practical convenience, but unfortunately it is
incompatible with
the traditional use of motivic weight.
More precisely, the Thom reduction map $BP \map H\F_2$ induces a functor
from $BP$-synthetic to $\F_2$-synthetic homotopy theory.
By careful inspection of definitions, this functor takes
$\tau$ to $\lambda^2$.
In terms of stem, motivic weight, and synthetic degree,
the functor induces a homomorphism from the motivic 
homotopy group $\pi_{s,w}$ to the synthetic homotopy group $\pi_{s,2w-s}$.
Beware that the formal suspension has synthetic degree $-1$.  For example,
the cofiber of an element $\gamma$ of the synthetic homotopy group
$\pi_{s,d}$ has two cells in dimensions $(0,0)$ and $(s+1,d-1)$.

The synthetic Adams differentials are in precise correspondence with
classical Adams differentials.  For each classical differential
$d_r(x) = y$, we have a synthetic differential $d_r(x) = \lambda^{r-1} y$.
Consequently, the class $y$ itself is not hit by a differential,
but $y$ is annihilated by $\lambda^{r-1}$ in the $E_\infty$-page.
Moreover, $y$ detects a homotopy class that is annihilated by $\lambda^{r-1}$.

The synthetic Adams spectral sequence can be described as 
the classical Adams spectral sequence ``with history".
The Adams differentials can be reconstructed from the $E_\infty$-page,
since each element annihilated by $\lambda^{r-1}$ corresponds to
a non-zero Adams $d_r$ differential.

If we invert $\lambda$ in the synthetic Adams spectral sequence,
then we recover the classical Adams spectral sequence tensored with
$\F_2[\lambda^{\pm 1}]$.  In more naive terms, the
$\lambda$-free part of the synthetic Adams $E_\infty$-page is
identical to the classical $E_\infty$-page.

On the other hand, the $\lambda$-power torsion elements in the Adams
$E_\infty$-page represent exotic phenomena.  These torsion classes
can be the targets of 
hidden extensions that are not seen classically.
In general, the additional structure encoded in the $\lambda$-power
torsion can be exploited to carry out computations.
These phenomena are well-illustrated in \cite{Burklund21} \cite{BX} \cite{Marek} \cite{BJM24}.

\subsection{$\pi_{70}$ and $\pi_{71}$}

The value of the Adams differential $d_5(p_1 + h_0^2 h_3 h_6)$
given in Table \ref{tab:diff} is not proved in this manuscript.  
Very recently announced machine computations \cite{LWX} led to the
discovery of a mistake regarding this differential.
Note that the value of $d_5(p_1 + h_0^2 h_3 h_6)$ claimed in
\cite{IWX}*{Table 8} is incorrect, 
although \cite{IWX}*{Lemma 5.61} is correct.

We now know that there is a non-zero differential
$d_5(h_1 p_1) = \lambda^4 h_1 h_3 (\D e_1 + C_0)$
that was previously believed to be zero.  
Therefore, the values of $\pi_{70}$ and $\pi_{71}$ in
\cite{IWX}*{Table 1} are incorrect.  The correct values for
the $2$-primary $v_1$-torsion in $\pi_{70}$ and $\pi_{71}$ are
\begin{gather*}
(\Z/2)^6 \oplus \Z/4 \\
(\Z/2)^5 \oplus \Z/4 \oplus \Z/8
\end{gather*}
respectively.

Also, \cite{IWX}*{Table 15} claims that there is a non-zero
hidden $2$-extension from $h_1 h_3 H_1$ to $h_1 h_3 (\D e_1 + C_0)$ 
in the classical 70-stem.  This is incorrect
since $h_1 h_3 (\D e_1 + C_0)$ is now known to be zero
in the classical Adams $E_\infty$-page.

\subsection{Organization}

The main goal of Section \ref{sctn:h6g+h2e2} is to establish
Theorem \ref{thm:perm-h6g+h2e2}, which shows that 
$h_6 g + h_2 e_2$ is a permanent cycle.  This argument depends on
several technical computations in stable homotopy groups, whose
proofs are assembled in Section \ref{sctn:computations}.

Section \ref{sctn:computations} contains a number of computations.  Many of these computations are independent facts whose proofs are not directly connected.  Some of the computations are used in the proofs of later computations.  The results are arranged mostly in order of increasing stem.  However, there are a few exceptions to this general principle 
so that each individual computation depends only on previous computations.

\subsection{Notation}
\label{subsctn:notation}

By default, we work in the synthetic Adams spectral sequence.  Occasionally, we consider the classical or $\C$-motivic Adams spectral sequence; we will be explicit when that occurs.

We use the same notation as \cite{IWX} for elements of the Adams spectral sequence and for elements in stable homotopy groups.
For an element $x$ in the Adams $E_\infty$-page, let
$\{ x\}$ represent the set of all homotopy elements that are detected
by $x$.  This set possesses more than one element when there are
other elements in filtrations higher than the filtration of $x$.

We write $\hsf$ for an element in $\pi_{0,1}$ that 
is detected by $h_0$.  We choose $\hsf$ such that $2 = 1 + 1$
in $\pi_{0,0}$ is equal to $\lambda \hsf$ and is detected by $\lambda h_0$.
Another possible notation for $\hsf$ is $\mathsf{h}$, which 
some authors have used for the motivic homotopy element that is detected
by $h_0$.
It stands for ``hyperbolic" because of its relationship to the hyperbolic plane as an element of the Grothendieck-Witt group.  It also stands for ``Hopf" because it is the zeroth Hopf map (followed by $\eta$, $\nu$, and $\sigma$).

Most of our results are stated with the degrees in which they occur.  
This makes the manuscript
easier to use for readers who are seeking specific results.

\section{The permanent cycle $h_6 g + h_2 e_2$}
\label{sctn:h6g+h2e2}

Our goal is to show that the element $h_6 g + h_2 e_2$ 
in the $83$-stem is a permanent cycle.
We begin by sketching a line of reasoning without supporting details.
Motivated by this sketch,
later in this section we give
a precise argument that relies on a number of specific
computational facts about various stable homotopy groups.
These computational facts appear in the tables in 
Section \ref{sctn:intro}, and their proofs 
are given in Section \ref{sctn:computations}.

One might expect that the element $h_6 g$ detects
the Toda bracket $\langle 2, \theta_5, \kappabar \rangle$
since the relation $2\theta_5 = 0$ (proved in \cite{Xu16}) is suggested by the
Adams differential $d_2(h_6) = h_0 h_5^2$,
and $\kappabar$ is detected by $g$.
There are two problems with this hope.  First,
we do not expect $h_6 g$ itself to be a permanent cycle.
Rather we expect the linear combination
$h_6 g + h_2 e_2$ to survive.  Second,
the Toda bracket is not defined because
$\theta_5 \kappabar$ is non-zero and detected by 
$h_5^2 g$ in the $82$-stem.

But there is a relation $h_5^2 g = h_2 x_1$ in the Adams $E_2$-page.
Therefore, one might hope for a matric Toda bracket of the form
\[
\left\langle
2, 
\left[ \begin{array}{cc} \theta_5 & \xi \end{array} \right],
\left[ \begin{array}{c} \kappabar \\ \nu \end{array} \right]
\right\rangle,
\]
where $\xi$ is detected by $x_1$.  
The existence of such a bracket would require that
$2 \xi = 0$.  This relation is created by the Adams differential
$d_2(e_2) = h_0 x_1$.  This is a promising sign because we anticipate
the appearance of $h_2 e_2$.

But the matric Toda bracket has a fatal flaw
because $x_1$ does not survive; in fact, $d_3(x_1) = h_1 m_1$.
Therefore, we want to replace the entry
$\xi$ in the above matric bracket with something involving
the relation $\eta \mu = 0$, where $\mu$ is detected by $m_1$.

At this point, the language of matric Toda brackets 
of maps between spheres
is no longer
adequate because of varying
lengths of the null compositions that go into the construction.
The cell complex construction that we adopt is a way of turning
the above sketch into a precise argument.

\subsection{The construction of a $6$-cell complex $X$}
\label{subsctn:complex}

In this section, we will construct an explicit cell complex $X$
that has six cells in bidegrees
\[
(0,-2), \ (20,4), \ (3, 1), \ (81, 6), \ (83, 4), \ (83, 7).
\]
We also construct some closely related complexes that
map to $X$ or receive a map from $X$.
Using standard obstruction theory, we will construct $X$ 
by attaching cells to smaller complexes.

The spectrum $X$ can be intuitively described in terms of the
cell diagram in Figure \ref{fig:X}. 
See \cite{BJM} \cite{HLSX} \cite{WX17} for other uses of cell diagrams.
In this figure (and throughout the construction of $X$),
we refer to the elements $\alpha$, $\beta$, and $\mu$
shown in Table \ref{tab:elements}.

\begin{figure}[h!]
\caption{The $6$-cell complex $X$ \label{fig:X}}
$$\xymatrix{
*+[F]{83, 7}  \ar@{-}@/_-55pt/[ddd]^-{\lambda \beta} \\
*+[F]{83, 4} \ar@{-}[d]^-{\lambda^2 \eta} \ar@{-}@/^-35pt/[ddd]_-{\lambda \theta_5} \ar@{-}@/^-65pt/[dddd]_-{\lambda \alpha} \\
*+[F]{81, 6} \ar@{-}[d]^-{\lambda \mu} \\
*+[F]{3, 1} \\
*+[F]{20, 4} \\
*+[F]{0,- 2}
}$$
\end{figure}
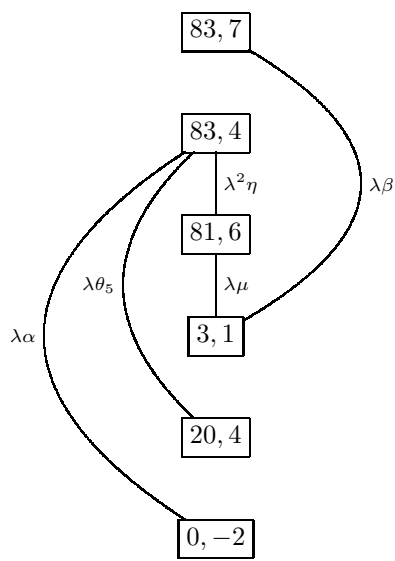

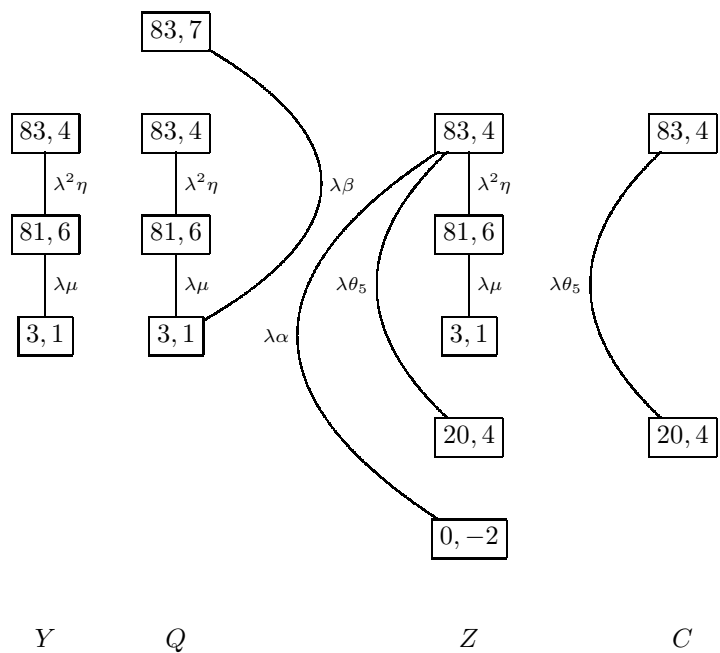
\begin{figure}[h!]
\caption{Some complexes related to $X$ \label{fig:subcomplexes}}
$$\xymatrix{
& *+[F]{83, 7}  \ar@{-}@/_-55pt/[ddd]^-{\lambda \beta} && && \\
*+[F]{83, 4} \ar@{-}[d]^-{\lambda^2 \eta} & *+[F]{83, 4} \ar@{-}[d]^-{\lambda^2 \eta} & && *+[F]{83, 4} \ar@{-}[d]^-{\lambda^2 \eta} \ar@{-}@/^-35pt/[ddd]_-{\lambda \theta_5} \ar@{-}@/^-65pt/[dddd]_-{\lambda \alpha}   && *+[F]{83, 4} \ar@{-}@/^-35pt/[ddd]_-{\lambda \theta_5} \\
*+[F]{81, 6} \ar@{-}[d]^-{\lambda \mu} & *+[F]{81, 6} \ar@{-}[d]^-{\lambda \mu} & && *+[F]{81, 6} \ar@{-}[d]^-{\lambda \mu} \\
*+[F]{3, 1} & *+[F]{3, 1} & && *+[F]{3, 1} \\
& & && *+[F]{20, 4} && *+[F]{20, 4}  \\
& &     && *+[F]{0, -2}  \\
 \Xtwofour & \Xtwofive & && \Xzerofour && \Xonefour
}$$
\end{figure}

\begin{step}
\label{step:X[2,4]}
To begin, Lemma \ref{lem:mu} implies that
$\lambda \mu \cdot \lambda^2 \eta$ is zero.
Therefore, we can 
form a 3-cell complex $\Xtwofour$ with cells in bidegrees 
$(3, 1)$, $(81, 6)$, and $(83, 4)$.
The attaching maps for $\Xtwofour$ are
$\lambda \mu$ and $\lambda^2 \eta$.
The first part of Figure \ref{fig:subcomplexes} shows
a cell diagram for $\Xtwofour$.
Note that $\Xtwofour$ comes equipped with 
inclusion $i_\Xtwofour: S^{3,1} \map \Xtwofour$ of the bottom cell
and projection
$p_\Xtwofour: \Xtwofour \map S^{83,4}$ to the top cell.

Beware that the construction of $\Xtwofour$ depends on a choice
of null-homotopy for $\lambda \mu \cdot \lambda^2 \eta$.
This choice turns out to be immaterial; the later steps of our
construction can be carried out with any fixed null-homotopy.
\end{step}

\begin{step}
\label{step:X[2,5]}
Next, consider the composition 
$i_\Xtwofour \cdot \lambda \beta: S^{82,8} \map \Xtwofour$.
Let $\Xtwofive$ be the cofiber of this map.
The second part of Figure \ref{fig:subcomplexes} shows a cell diagram
for $\Xtwofive$.
Note that $\Xtwofive$ comes equipped with inclusion
$i_\Xtwofive: S^{3,1} \map \Xtwofive$ of the bottom cell.

The top cell of $\Xtwofive$ is only attached to the bottom cell in bidegree $(3,1)$,
so the projection $p_\Xtwofour: \Xtwofour \map S^{83, 4}$ 
extends to a projection $p_\Xtwofive:\Xtwofive \map S^{83, 4}$ to the 
third cell of $\Xtwofive$.
More formally,
the dashed arrow in the commutative diagram 
\[
\xymatrix{
S^{82, 8} \ar[r]^-{i_\Xtwofour \cdot \lambda \beta} & \Xtwofour \ar@{->}[d]^{p_\Xtwofour} \ar[r] & \Xtwofive \ar@{-->}[dl]^-{p_\Xtwofive}\\
 & S^{83, 4} &
}
\]
exists because the top row is a cofiber sequence and because
the composition $p_\Xtwofour \cdot i_\Xtwofour \cdot \lambda \beta$ is zero
(since $p_\Xtwofour \cdot i_\Xtwofour$ is zero).

There is also a projection
$p'_\Xtwofive: \Xtwofive \map S^{83, 7} \vee S^{83,4}$
to the top two cells.  The first component of $p'_\Xtwofive$
is projection to the top cell, and the second component is
the map $p_\Xtwofive$ defined in the previous paragraph.
\end{step}

\begin{step}
\label{step:X[0,5]}
Now consider the composition 
$(\lambda \theta_5, \lambda \alpha) \cdot p_\Xtwofive:
\Xtwofive \map S^{21, 3} \vee S^{1,-3}$.
Define $X$ to be the fiber of this map.

Note that $X$ comes equipped with an inclusion
$i_X: S^{3,1} \vee S^{20,4} \vee S^{0,-2} \map X$
of the bottom three cells because they are not attached to each other.
More formally, consider the diagram
\[
\xymatrix{
& & S^{3,1} \ar@{-->}[dl] \ar[d]^{i_\Xtwofive} \\
S^{20,4} \vee S^{0,-2} \ar[r] & X \ar[r] & \Xtwofive \ar[rr]_-{(\lambda \theta_5, \lambda \alpha) \cdot p_\Xtwofive} & & S^{21,3} \vee S^{1,-3},
}
\]
in which the bottom row is a cofiber sequence.
The last two components of $i_X$ appear in the cofiber
sequence.  The first component, shown as the dashed arrow, exists
because the composition
$(\lambda \theta_5, \lambda \alpha) \cdot p_\Xtwofive \cdot i_\Xtwofive$ is 
zero (since $p_\Xtwofive \cdot i_\Xtwofive$ is zero).
\end{step}

\begin{step}
\label{step:X[0,4]}
We define $\Xzerofour$ to be the fiber of the projection
$p_X: X \map S^{83, 7}$ to the top cell.
The third part of Figure \ref{fig:subcomplexes}
shows a cell diagram for $\Xzerofour$.
Note that $\Xzerofour$ comes equipped with an inclusion
$i_\Xzerofour: S^{3, 1} \vee S^{20, 4} \vee S^{0, -2} \map \Xzerofour$.
More precisely, consider the diagram
\[
\xymatrix{
& S^{3,1} \vee S^{20, 4} \vee S^{0,-2} \ar[d]^{i_X}
\ar@{-->}[dl]_{i_\Xzerofour} \\
\Xzerofour \ar[r] & X \ar[r]_{p_X} & S^{83, 7}
}
\]
in which the bottom row is a cofiber sequence.  The dashed arrow
exists because the composition $p_X i_X$ is zero.
\end{step}

\begin{step}
\label{step:X1,4}
We define $\Xonefour$ to be the cofiber of the map
$\lambda \theta_5: S^{82, 5} \map S^{20,4}$.
The fourth part of Figure \ref{fig:subcomplexes} shows a cell diagram
for $\Xonefour$.
Note that there is an inclusion
$i_\Xonefour: S^{20, 4} \map \Xonefour$ of the bottom cell.

There is a map $\pi: X \map C$ defined as follows.
Consider the diagram
\[
\xymatrix{
\Sigma^{-1} Q \ar[rr]^-{(\lambda \theta_5, \lambda \alpha) \cdot p_Q} &&
S^{20, 4} \vee S^{0, -2} \ar[rr] \ar[d]_{(i_C, 0)} && X \ar@{-->}[dll]^{\pi} \\
&& C,
}
\]
in which the top row is the defining cofiber sequence for
$X$, as described in Step \ref{step:X[0,5]}.
In order to show that the dashed arrow exists, we need only argue that
the composition $(i_C, 0) \cdot (\lambda \theta_5, \lambda \alpha) \cdot
p_Q$ is zero.  This composition equals $i_C \cdot \lambda \theta_5 \cdot p_Q$, and $i_C \cdot \lambda \theta_5$ is zero because
$C$ is the cofiber of $\lambda \theta_5$.
\end{step}

\subsection{The map $f: S^{83,5} \map X$}
\label{subsctn:f}

We next construct a map
$f: S^{83, 5} \map X$, where $X$ is the complex constructed
in Section \ref{subsctn:complex}.
The map $f$ is intuitively described by
the cell diagram in Figure \ref{fig:maps}.
It is represented in the left two columns of the figure.

\begin{figure}[h!]
\caption{The maps $f: S^{83, 5} \map X$ and $g: X \map S^{0,0}$\label{fig:maps}}
$$\xymatrix{
*+[F]{83, 5} \ar[rr]^-{\lambda^2} \ar[rrd]^-{\hsf} && *+[F]{83, 7}  \ar@{-}@/_-55pt/[ddd]^-{\lambda \beta} && \\
 && *+[F]{83, 4} \ar@{-}[d]^-{\lambda^2 \eta} \ar@{-}@/^-35pt/[ddd]_-{\lambda \theta_5} \ar@{-}@/^-65pt/[dddd]_-{\lambda \alpha} &&\\
 && *+[F]{81, 6} \ar@{-}[d]^-{\lambda \mu} &&\\
 && *+[F]{3, 1} \ar[rrdd]^-{\nu} &&\\
 && *+[F]{20, 4} \ar[rrd]^-{\overline{\kappa}} &&\\
 && *+[F]{0, -2} \ar[rr]^-{\lambda^2} && *+[F]{0,0}
}$$
\end{figure}

\begin{step}
\label{step:f[2,5]}
Recall from Lemma \ref{lem:beta} that
$\lambda \beta \cdot \lambda^2$ belongs 
to the Toda bracket
$\langle \lambda \mu,  \lambda^2 \eta, \hsf \rangle$.
Therefore,
we have a map $f_\Xtwofive: S^{83,5} \rightarrow \Xtwofive$ 
such that composition
with the projection $p'_\Xtwofive: \Xtwofive \map S^{83,7} \vee S^{83,4}$ is 
$(\lambda^2, \hsf): S^{83,5} \rightarrow S^{83,7} \vee S^{83,4}$.
Recall that the map $p'_\Xtwofive$ was discussed
in Step \ref{step:X[2,5]}.

In fact, one must be careful here about the choice of null-homotopy
in Step \ref{step:X[2,4]}.  We need that $\lambda \beta \cdot \lambda^2$
belongs not merely to the Toda bracket 
$\langle \lambda \mu, \lambda^2 \eta, \hsf \rangle$, but rather to
the subset of this bracket consisting of representatives that are
defined using the specified null-homotopy
of $\lambda \mu \cdot \lambda^2 \eta$.

This turns out to be no problem for us.  We show in
Lemma \ref{lem:beta} that every element of the Toda bracket
$\langle \lambda \mu, \lambda^2 \eta, \hsf \rangle$ is of the form
$\lambda \beta \cdot \lambda^2$.  So we can choose $\beta$
to be compatible with the previously chosen null-homotopy.
\end{step}

\begin{step}
\label{step:f[0,5]}
We construct $f$ as the dashed map in the commutative diagram
\[
\xymatrix{
&& S^{83,5} \ar[d]^{f_\Xtwofive} \ar@{-->}[dll]_{f} \\
X \ar[rr] &&
\Xtwofive \ar[rr]^-{(\lambda \theta_5, \lambda \alpha) \cdot p_\Xtwofive} &&
S^{21,3} \vee S^{1,-3},
}
\]
in which the bottom row is the fiber sequence from Step \ref{step:X[0,5]}.
In order to do this, we just need to argue that the composition
$(\lambda \theta_5, \lambda \alpha) \cdot p_\Xtwofive \cdot f_\Xtwofive$
is zero.
This follows from two observations.
First,
$p_\Xtwofive \cdot f_\Xtwofive: S^{83,5} \map S^{83,4}$ is $\hsf$,
as explained in Step \ref{step:f[2,5]}.
Second $(\lambda \theta_5, \lambda \alpha) \hsf$
is zero by Lemma \ref{lem:alpha} and \cite{Xu16};
here we are using that $2$ equals $\lambda \hsf$.
\end{step} 
 
\subsection{The map $g: X \map S^{0,0}$}
\label{subsctn:g}

We now construct a map
$g: X \map S^{0,0}$, where $X$ is the complex constructed
in Section \ref{subsctn:complex}.
Later in Section \ref{subsctn:composition},
we will study the composition $gf$ as an element of $\pi_{83,5}$.

The map $g$ is intuitively described by
the cell diagram in Figure \ref{fig:maps}.
It is represented in the right two columns of the figure.

\begin{step}
\label{step:g[0,4]}
By Lemma \ref{lem:alpha}, 
$\kappabar \cdot \lambda \theta_5 + \lambda^2 \cdot \lambda \alpha$
is contained in the Toda bracket
$\langle \nu, \lambda \mu, \lambda^2 \eta \rangle$.
Therefore, we can form a map $g_\Xzerofour:\Xzerofour \map S^{0,0}$ 
such that the composition $g_\Xzerofour i_\Xzerofour$
is 
\[
(\nu, \kappabar, \lambda^2): S^{3,1} \vee S^{20,4} \vee S^{0,-2} \rightarrow S^{0,0}.
\]

As in Step \ref{step:f[2,5]},
one must be careful here about the choice of null-homotopy
in Step \ref{step:X[2,4]}.  We need that 
$\kappabar \cdot \lambda \theta_5 + \lambda^2 \cdot \lambda \alpha$
belongs not merely to the Toda bracket 
$\langle \nu, \lambda \mu, \lambda^2 \eta \rangle$, but rather to
the subset of this bracket consisting of representatives that are
defined using the specified null-homotopy
of $\lambda \mu \cdot \lambda^2 \eta$.

This turns out to be no problem for us.  We show in
Lemma \ref{lem:alpha} that every element of the Toda bracket
$\langle \lambda \mu, \lambda^2 \eta, \hsf \rangle$ is of the form
$\kappabar \cdot \lambda \theta_5 + \lambda^2 \cdot \lambda \alpha$.
So we can choose $\alpha$
to be compatible with the previously chosen null-homotopy.
\end{step}

\begin{step}
\label{step:g[0,5]}
Consider the commutative diagram
\[
\xymatrix{
S^{82,8} \ar[r] & \Xzerofour \ar[r] \ar[d]_{g_\Xzerofour} & X \ar[r]^-{p_X} \ar@{-->}[dl]^-{g} & S^{83,7} \\
 & S^{0,0} & &
}
\]
in which the top row is a cofiber sequence.
The composition $S^{82,8} \map \Xzerofour \map S^{0,0}$
is zero because $\nu \cdot \lambda \beta$ is zero.
Therefore, $g_\Xzerofour$ extends to the dashed arrow
$g: X \map S^{0,0}$ shown in the diagram.
\end{step}

\subsection{The composition $g f: S^{83,5} \map S^{0,0}$}
\label{subsctn:composition}

We will detect the map $g f: S^{83,5} \map S^{0,0}$ by smashing with $S^{0,0}/\lambda$ and passing
to the category of $S^{0,0}/\lambda$-modules.  This latter category is
entirely algebraic, i.e., is equivalent to the category of
derived comodules over the classical dual Steenrod algebra.  
In more concrete
terms, the synthetic homotopy groups of $S^{0,0}/\lambda$ are isomorphic
to the classical Adams $E_2$-page, i.e., the cohomology of the dual Steenrod algebra.  These latter homotopy groups of $S^{0,0}/\lambda$
are much more easily understood than the homotopy 
groups of $S^{0,0}$.

We will now show that the complex $X$ splits completely 
after smashing with $S^{0,0}/\lambda$.  This means that maps into and out of
$X/\lambda$ are easy to compute.

\begin{prop}
\label{prop:split/L}
The $S^{0,0}/\lambda$-module $X/\lambda$
splits as a wedge
of suspensions of $S^{0,0}/\lambda$.
\end{prop}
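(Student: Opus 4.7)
The plan is to exploit the fact that every attaching map used in the construction of $X$ in Section \ref{subsctn:complex} is a multiple of $\lambda$. Specifically, the maps $\lambda\mu$, $\lambda^2\eta$, $\lambda\beta$, $\lambda\theta_5$, and $\lambda\alpha$ all appear with an explicit $\lambda$ factor. Since $\lambda$ acts by zero on the $S^{0,0}/\lambda$-module category, any map that is a multiple of $\lambda$ becomes null-homotopic after smashing with $S^{0,0}/\lambda$. Smashing with $S^{0,0}/\lambda$ commutes with both cofibers and fibers of synthetic spectra, so we may apply $-\wedge S^{0,0}/\lambda$ term by term to the construction in Section \ref{subsctn:complex}.

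Proceeding inductively through the construction, I would first handle $\Xtwofour/\lambda$. Both attaching maps $\lambda\mu$ and $\lambda^2\eta$ become null, and the chosen null-homotopy of their composite also becomes null, so $\Xtwofour/\lambda$ splits as
\[
\Xtwofour/\lambda \simeq S^{3,1}/\lambda \vee S^{81,6}/\lambda \vee S^{83,4}/\lambda.
\]
Next, $\Xtwofive$ is the cofiber of $i_\Xtwofour \cdot \lambda\beta$; modulo $\lambda$ this map is null, giving $\Xtwofive/\lambda \simeq \Xtwofour/\lambda \vee S^{83,7}/\lambda$. Finally, $X$ is the fiber of $(\lambda\theta_5,\lambda\alpha)\cdot p_\Xtwofive$; modulo $\lambda$ this map is null, so the fiber splits as a product (equivalently a wedge, in the stable category) of $\Xtwofive/\lambda$ with $S^{20,4}/\lambda \vee S^{0,-2}/\lambda$. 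Combining these yields a splitting of $X/\lambda$ as a wedge of six suspensions of $S^{0,0}/\lambda$, one for each cell listed in Section \ref{subsctn:complex}.

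The only potential subtlety is ensuring that the wedge splittings at each stage can be chosen compatibly with the next step of the construction; this is not really an obstacle, since once an attaching map is null a choice of splitting exists, and only the existence (not canonicity) of the splitting is required for the statement. The proof is essentially formal and is the key point that makes the whole $S^{0,0}/\lambda$-reduction strategy of Section \ref{subsctn:composition} tractable: once $X/\lambda$ is a wedge of suspensions of $S^{0,0}/\lambda$, maps into and out of $X/\lambda$ are computed by the classical Adams $E_2$-page in the relevant bidegrees.
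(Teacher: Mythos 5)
There is a genuine gap, and it sits exactly at the one place where the splitting is not formal. Your reduction of the later stages is fine: $\Xtwofive/\lambda$ and $X/\lambda$ are formed as the cofiber (resp.\ fiber) of a single map that is a $\lambda$-multiple, and the cofiber or fiber of a null map splits off the corresponding cells with no further input. But for $\Xtwofour/\lambda$ the situation is different, because $\Xtwofour$ is a three-cell complex with two \emph{composable} attaching maps. After smashing with $S^{0,0}/\lambda$, the maps $\lambda\mu$ and $\lambda^2\eta$ do become null, so $C(\lambda\mu)/\lambda \simeq S^{3,1}/\lambda \vee S^{81,6}/\lambda$ and the component of the top attaching map landing on $S^{81,6}/\lambda$ vanishes. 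However, the attaching map of the $(83,4)$-cell is a chosen lift of $\lambda^2\eta$ through $C(\lambda\mu)$, and mod $\lambda$ it still has a component landing on the bottom cell $S^{3,1}/\lambda$: a secondary attaching map, an element of $\pi_{79,4}(S^{0,0}/\lambda)$, i.e.\ of the classical Adams $E_2$-page in stem $79$ and filtration $4$. Your phrase ``the chosen null-homotopy of their composite also becomes null'' is precisely the unjustified step: mod $\lambda$ the chosen null-homotopy of $\lambda\mu\cdot\lambda^2\eta$ has no reason to agree with the canonical null-homotopy coming from the vanishing of the factors, and in general a three-cell complex whose consecutive attaching maps die after base change need \emph{not} split --- the discrepancy is exactly this Toda-bracket-type secondary attachment.

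The paper closes this gap by a degree check: the Adams $E_2$-page is zero in stem $79$ and filtration $4$, so the bottom-to-top attaching map of $\Xtwofour/\lambda$ is forced to vanish, and only then does $\Xtwofour/\lambda$ split. Your argument would be complete once you insert this vanishing statement (and note that it is the only place where an actual computation, rather than the formal observation ``all attaching maps are $\lambda$-multiples,'' is required). The compatibility-of-splittings issue you flag at the end is indeed harmless, but it is not the real subtlety.
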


\begin{proof}
The $S^{0,0}/\lambda$-module
$\Xtwofour/\lambda$ has three cells:
$S^{83, 4}/\lambda$, $S^{81, 6}/\lambda$, and $S^{3,1}/\lambda$.
The attaching map between the bottom and middle cells is 
the smash product of $\lambda \mu$ with $S^{0,0}/\lambda$.
This smash product is zero because $\lambda \mu$ is a multiple of $\lambda$.
Similarly, 
the attaching map between the middle and top cells is zero.
The Adams $E_2$-page is zero in stem $79$ and filtration $4$;
therefore, zero is the only possible attaching map between the 
bottom and top cells.
This shows that $\Xtwofour/\lambda$ splits.

As shown in Step \ref{step:X[2,5]}, the spectrum
$\Xtwofive$ is defined by the cofiber sequence
\[
\xymatrix{
S^{82,8} \ar[r]^-{i_\Xtwofour \cdot \lambda \beta} & \Xtwofour \ar[r] & \Xtwofive. 
}
\]
After smashing with $S^{0,0}/\lambda$, this cofiber sequence splits
because $i_\Xtwofour \cdot \lambda \beta$ is a multiple of $\lambda$.
Consequently, $\Xtwofive/\lambda$ splits as
\[
\left( \Xtwofour/\lambda \right) \vee
\left (S^{83,7}/\lambda \right),
\]
and we have already shown that $\Xtwofour/\lambda$ splits.

As shown in Step \ref{step:X[0,5]}, the spectrum
$X$ is defined by the cofiber sequence
\[
\xymatrix{
X \ar[rr] && \Xtwofive \ar[rr]^-{(\lambda \theta_5, \lambda \alpha) \cdot p_\Xtwofive} && S^{21,3} \vee S^{1,-3}. 
}
\]
After smashing with $S^{0,0}/\lambda$, this cofiber sequence splits
because $(\lambda \theta_5, \lambda \alpha) \cdot p_\Xtwofive$ is a multiple
of $\lambda$.  Consequently, $X/\lambda$ splits as
\[
\left( \Xtwofive/\lambda \right) \vee
\left (S^{20,4}/\lambda \right) \vee
\left (S^{0,-2}/\lambda \right),
\]
and we have already shown that $\Xtwofive/\lambda$ splits.
\end{proof}

\begin{rmk}
\label{rmk:split/L}
Similarly to Proposition \ref{prop:split/L} but much easier, the
$S^{0,0}/\lambda$-module $\Xonefour/\lambda$ splits as a wedge of two
suspensions of $S^{0,0}/\lambda$.  Here $\Xonefour$ is the $2$-cell complex
discussed in Step \ref{step:X1,4}.
\end{rmk}

After smashing $f$ with $S^{0,0}/\lambda$, the map
$f/\lambda$ is of the form
\[
\left( a_{0,-2}, a_{0,1}, a_{2,-1}, a_{80, 4}, a_{63, 1}, a_{83, 7}
\right),
\]
where each $a_{s,f}$ is an element of the Adams $E_2$-page in
stem $s$ and filtration $f$.
Similarly, the map $g/\lambda$ is of the form
\[
\left( b_{83,7}, b_{83,4}, b_{81,6}, b_{3,1}, b_{20,4}, b_{0,-2} \right),
\]
where each $b_{s,f}$ is an element of the Adams $E_2$-page 
in stem $s$ and filtration $f$.
The composition $g f /\lambda$ is equal to
\[
b_{83,7} a_{0,-2} +
b_{83,4} a_{0,1} +
b_{81,6} a_{2,-1} +
b_{3,1} a_{80,4} + 
b_{20,4} a_{63,1} +
b_{0,-2} a_{83,7}.
\]
The computation of $g f /\lambda$ is illustrated in 
Figure \ref{fig:gf/L}.  This figure can be interpreted as a cell diagram
in the category of $S^{0,0}/\lambda$-modules.  However, since $X/\lambda$
splits, the cell diagram is trivial.

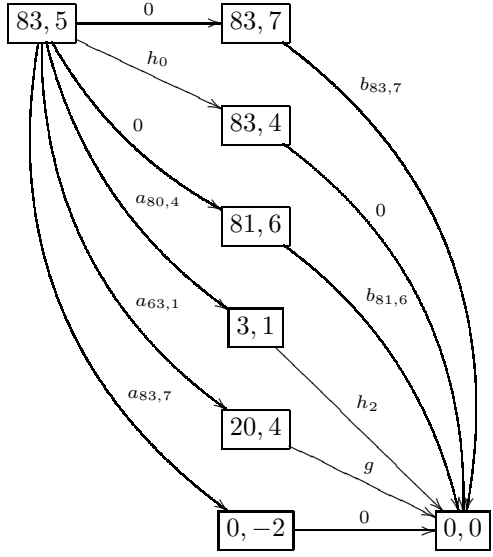
\begin{figure}[h!]
\caption{The composition $gf/\lambda$ \label{fig:gf/L}}
\[
\xymatrix{
*+[F]{83,5} \ar[rr]^-{0} \ar[rrd]^-{h_0} \ar@/^-10pt/[rrdd]^-{0} \ar@/^-15pt/[rrddd]^(.55){a_{80,4}} \ar@/^-25pt/[rrdddd]^(.65){a_{63,1}} \ar@/^-35pt/[rrddddd]^(.7){a_{83,7}} && *+[F]{83,7} \ar@/^35pt/[rrddddd]^(.2){b_{83,7}}  && \\
 && *+[F]{83,4} \ar@/^25pt/[rrdddd]^(.3){0}   &&\\
 && *+[F]{81,6} \ar@/^15pt/[rrddd]^(.35){b_{81,6}}  &&\\
 && *+[F]{3,1} \ar[rrdd]^(.45){h_2} &&\\
 && *+[F]{20,4} \ar[rrd]^-{g} &&\\
 && *+[F]{0,-2} \ar[rr]^-{0} && *+[F]{0,0}
}
\]
\end{figure}

\begin{lemma}
\label{lem:gf/L}
The composition $g f/\lambda: S^{83,5}/\lambda \map S^{0,0}/\lambda$
is equal to $h_2  a_{80,4} + g  a_{63,1}$.
\end{lemma}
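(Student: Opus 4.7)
The plan is to exploit the splitting of $X/\lambda$ provided by Proposition \ref{prop:split/L}: after reducing modulo $\lambda$, the spectrum $X/\lambda$ is a wedge of six suspensions of $S^{0,0}/\lambda$, so both $f/\lambda$ and $g/\lambda$ decompose into six independent components indexed by the cells of $X$, and the composition $gf/\lambda$ is simply the sum of pairwise products, as in the displayed formula preceding the lemma. The proof then reduces to evaluating each of these twelve components and observing that most of the resulting products vanish.

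First I would read off the components of $f/\lambda$ using the construction in Section \ref{subsctn:f}. Step \ref{step:f[2,5]} shows that the top cell projection $p_X f$ equals $\lambda^2$, which is zero modulo $\lambda$, so $a_{0,-2} = 0$. The projection onto the $(83,4)$ summand is $\hsf$, detected by $h_0$, so $a_{0,1} = h_0$. The component $a_{2,-1}$ vanishes for degree reasons, since the classical Adams $E_2$-page is concentrated in non-negative filtrations. The remaining components $a_{80,4}$, $a_{63,1}$, and $a_{83,7}$ are retained as formal names, since their values are not needed for the current lemma.

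Next I would read off the components of $g/\lambda$ using Section \ref{subsctn:g}. The formula $g_\Xzerofour i_\Xzerofour = (\nu, \kappabar, \lambda^2)$ from Step \ref{step:g[0,4]} directly yields $b_{3,1} = h_2$, $b_{20,4} = g$, and $b_{0,-2} = 0$. For $b_{83,4}$, the value of $g_\Xzerofour$ on the top cell of $\Xzerofour$ depends on the choice of null-homotopy for the relation $\kappabar \cdot \lambda\theta_5 + \lambda^2 \cdot \lambda\alpha \in \langle \nu, \lambda\mu, \lambda^2\eta \rangle$; the set of such null-homotopies is a torsor over $\pi_{83,4}(S^{0,0})$, which after reduction modulo $\lambda$ surjects onto the Adams $E_2$-page in stem $83$, filtration $4$, so I can choose the null-homotopy (equivalently, the map $g_\Xzerofour$) so that $b_{83,4} = 0$. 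The remaining components $b_{83,7}$ and $b_{81,6}$ play no role and are left as formal names.

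Substituting into the displayed formula, the summands $b_{83,7} a_{0,-2}$, $b_{83,4} a_{0,1}$, $b_{81,6} a_{2,-1}$, and $b_{0,-2} a_{83,7}$ are each zero, leaving $b_{3,1} a_{80,4} + b_{20,4} a_{63,1} = h_2 a_{80,4} + g a_{63,1}$ as claimed. The main obstacle is the treatment of $b_{83,4}$: I would need to verify carefully that the indeterminacy in the Toda bracket construction of $g_\Xzerofour$ is wide enough, after reducing modulo $\lambda$, to force $b_{83,4} = 0$, and that the required choice of null-homotopy is consistent with the subsequent extension of $g_\Xzerofour$ over the top cell of $X$ in Step \ref{step:g[0,5]}.
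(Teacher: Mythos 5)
Your overall strategy is the one forced by the setup and is the same as the paper's: use Proposition \ref{prop:split/L} to split $X/\lambda$, write $f/\lambda$ and $g/\lambda$ in components, and evaluate the six products. Your identifications $a_{0,1}=h_0$, $b_{3,1}=h_2$, $b_{20,4}=g$, and the vanishing of $a_{0,-2}$, $a_{2,-1}$, $b_{0,-2}$ (whether by negative filtration or by noting that the relevant components of $f$ and $g$ are $\lambda^2$, hence zero modulo $\lambda$) all agree with the paper. The problem is your treatment of $b_{83,4}$, which is exactly where the content of the lemma lies. The paper disposes of it by a degree count: the classical Adams $E_2$-page is zero in stem $83$ and filtration $4$, so $b_{83,4}=0$ for every admissible choice of $g_\Xzerofour$, with no adjustment of null-homotopies needed.

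Your proposed workaround has a genuine gap. You claim that the set of null-homotopies (equivalently, of extensions of $g_\Xzerofour$ over the top cell of $\Xzerofour$) is a torsor over $\pi_{83,4}$ whose reduction modulo $\lambda$ surjects onto the Adams $E_2$-page in stem $83$, filtration $4$, and you would use this to normalize $b_{83,4}$ to zero. That surjectivity is not justified and is false as a general principle: the image of $\pi_{83,4} \map \pi_{83,4}(S^{0,0}/\lambda) \cong \Ext$ consists only of classes that survive the classical Adams spectral sequence, so one cannot realize an arbitrary $E_2$-class by modifying $g_\Xzerofour$ on its top cell. Completing your argument would therefore require consulting the $E_2$-chart in bidegree $(83,4)$ anyway -- at which point the chart shows the group is zero and the paper's one-line argument already finishes, with no case analysis. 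There is also a secondary issue: re-choosing the null-homotopy changes $g$, so at best you would prove the statement for some modified map rather than for the $g$ constructed in Section \ref{subsctn:g} (harmless for the application in Theorem \ref{thm:perm-h6g+h2e2}, but it should be said). You flag the $b_{83,4}$ step yourself as unverified; as written, it is a missing step rather than a proof.
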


\begin{proof}
For degree reasons, $a_{0,2}$, $a_{2,-1}$, $b_{83,4}$, and $b_{0,-2}$
must be zero.
Moreover, $a_{0,1}$ must equal $h_0$ since
$h_0$ detects $\hsf$ and the composition of $f$ with
the projection $X \map S^{83,4}$ is $\hsf$.
Similarly, $b_{20,4}$ must equal $g$ since $g$ detects $\kappabar$
and the composition of $g$ with the inclusion
$S^{20,4} \map X$ is $\kappabar$.
Also, $b_{3,1}$ must equal $h_2$ since $h_2$ detects $\nu$
and the composition of $g$ with the inclusion
$S^{3,1} \map X$ is $\nu$.

Substituting these values, we find that $g f/\lambda$ equals
\[
b_{83,7} \cdot 0 +
0 \cdot h_0 +
b_{81,6} \cdot 0 +
h_2 \cdot a_{80,4} + 
g \cdot a_{63,1} +
0 \cdot a_{83,7},
\]
which simplifies to $h_2 a_{80,4} + g a_{63,1}$.
\end{proof}

We still need to compute that $a_{63,1}$ and $a_{80,4}$ are equal
to $h_6$ and $e_2$ respectively.  We need another cell complex
in order to to accomplish this.

\begin{step}
\label{step:D}
Recall the complex $\Xonefour$ constructed as the cofiber of
$\lambda \theta_5$ in Step \ref{step:X1,4}.
Let $D$ be the cofiber of the composition $\pi f: S^{83,5} \map C$.
A cell diagram for $D$ is shown on the left side of Figure \ref{fig:D}.
\end{step}

\begin{figure}[h!]
\caption{The complexes $D$ and $D/\lambda$ \label{fig:D}}
\[
\xymatrix{
*+[F]{84,4} \ar@{-}[d]^-{\hsf} && *+[F]{84,4} \ar@{-}[d]^-{h_0} \ar@{-}@/_-35pt/[dd]^-{h_6}\\
*+[F]{83,4} \ar@{-}[d]^-{\lambda \theta_5} && *+[F]{83,4} \\
 *+[F]{20,4} &&  *+[F]{20,4} \\
 D && D/\lambda \\
}
\]
\end{figure}
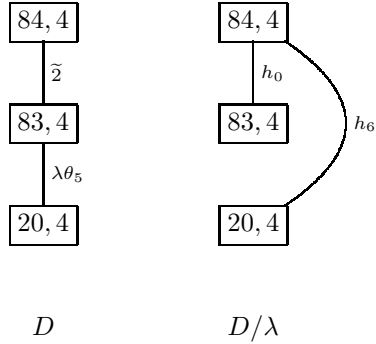

\begin{lemma}
\label{lem:Sq64}
The operation $\Sq^{64}$ is non-zero in the cohomology of $D$.
\end{lemma}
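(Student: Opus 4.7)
The plan is to pass to the $S^{0,0}/\lambda$-module $D/\lambda$, identify its cell structure with the right-hand side of Figure \ref{fig:D}, and then invoke the standard dictionary between the Adams $E_2$-page class $h_i \in \Ext^{1,2^i}_{\mathcal{A}}$ and the Steenrod operation $\Sq^{2^i}$ to conclude $\Sq^{64}\neq 0$.

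Since $D$ is the cofiber of $\pi f\colon S^{83,5}\to C$, smashing with $S^{0,0}/\lambda$ makes $D/\lambda$ the cofiber of $(\pi f)/\lambda$. By Remark \ref{rmk:split/L} and Proposition \ref{prop:split/L}, both $C/\lambda$ and $X/\lambda$ split as wedges of shifts of $S^{0,0}/\lambda$, so $(\pi f)/\lambda$ is completely described by its two components, one into each wedge summand of $C/\lambda$. The component into $S^{83,4}/\lambda$ must equal $h_0$ by the same argument that identified $a_{0,1}=h_0$ in the proof of Lemma \ref{lem:gf/L}: it is the mod-$\lambda$ reduction of the $\hsf$ factor fixed in Step \ref{step:f[2,5]}. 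This produces the $h_0$-attaching map of Figure \ref{fig:D}. The component into $S^{20,4}/\lambda$, after tracking $\pi/\lambda$ across the splitting of $X/\lambda$ and observing that only the $S^{20,4}$-summand of $X/\lambda$ contributes in the correct bidegree, equals the coefficient $a_{63,1}$ from Lemma \ref{lem:gf/L}. The bidegree $(63,1)$ of the classical Adams $E_2$-page is spanned by $h_6$, so $a_{63,1}\in\{0,h_6\}$.

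The main obstacle is ruling out $a_{63,1}=0$. The construction of $f$ as a lift through the fiber sequence in Step \ref{step:f[0,5]} is not unique, and different lifts can differ in the $S^{20,4}$-component. I would argue that the indeterminacy of the lift does not cover $h_6$, so the value of $a_{63,1}$ is forced by the construction. The mechanism is a Massey product identification: the null-homotopy of $(\lambda\theta_5,\lambda\alpha)\cdot p_\Xtwofive\cdot f_\Xtwofive$ witnessing the existence of the lift corresponds after mod $\lambda$ to the Massey product $\langle h_0, h_5^2,\cdot\rangle$ applied to the top-cell data of $\Xtwofive$, and by the classical differential $d_2(h_6)=h_0 h_5^2$ this Massey product contains $h_6$. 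Pinning down the indeterminacy to verify that $h_6$ appears (and is not absorbed) is the delicate computation I expect to occupy most of the proof.

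Once the cell structure of $D/\lambda$ on the right of Figure \ref{fig:D} is established, the non-vanishing of $\Sq^{64}$ on $H^{20}(D/\lambda;\F_2)$ follows from the classical observation that a two-cell synthetic complex whose attaching map is detected by $h_i$ mod $\lambda$ realizes a non-trivial $\Sq^{2^i}$ in cohomology; here $i=6$. Finally, since the cohomology of $D$ in the bidegrees of the $(20,4)$- and $(84,4)$-cells carries no $\lambda$-torsion obstruction, the mod-$\lambda$ reduction $H^*(D;\F_2)\to H^*(D/\lambda;\F_2)$ is an isomorphism on the relevant groups, and the non-vanishing of $\Sq^{64}$ transfers to $H^*(D;\F_2)$.
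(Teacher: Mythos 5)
There is a genuine gap, and it sits exactly where you expected the ``delicate computation'' to be. The paper's logic runs in the opposite direction from yours: Lemma \ref{lem:Sq64} is proved first, by a purely classical argument, and is then the \emph{input} used in Lemma \ref{lem:pi-f/L} and Lemma \ref{lem:a63,1} to determine the hidden attaching map $h_6$ of $D/\lambda$ and the coefficient $a_{63,1}$. Your proposal tries to go the other way: establish the $h_6$ attaching map directly from the construction of $f$ and then read off $\Sq^{64}\neq 0$. That direction is not circular in principle, but the mechanism you offer for it does not work. The mod-$\lambda$ homotopy $\pi_{*,*}(S^{0,0}/\lambda)$ is the classical $\Ext$ of the Steenrod algebra, where there are no Adams differentials; the differential $d_2(h_6)=h_0h_5^2$ does not produce a Massey product $\langle h_0,h_5^2,\cdot\rangle$ containing $h_6$ in that ring (indeed $h_0h_5^2\neq 0$ there, and no bracket of that shape of the relevant degree yields $h_6$). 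Whether the top cell of $D$ is attached to the $(20,4)$-cell by $h_6$ is precisely the question of whether $\Sq^{64}$ acts nontrivially on a complex built from the relations $2\theta_5=0$ and the attaching map $\theta_5$, and this is genuine homotopy-theoretic input that cannot be extracted by bookkeeping in the $E_2$-page or from the indeterminacy analysis of the lift in Step \ref{step:f[0,5]}. (Your observation that the indeterminacy of the lift cannot alter $a_{63,1}$ by $h_6$ is fine, since $h_6$ is not a permanent cycle and hence not in the image of reduction; but that only shows $a_{63,1}$ is well defined, not what its value is.)

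The missing ingredient is the one the paper actually uses, and it is short: work classically (the synthetic statement follows by comparison along $\lambda$-localization), invoke Adams's decomposition $\Sq^{64}=\Sq^1\Phi_{5,5}+\cdots$ of $\Sq^{64}$ in terms of secondary operations \cite{Adams60}, and use the Barratt--Jones--Mahowald computation that $\Phi_{5,5}$ is nonzero on the cofiber of $\theta_5$ \cite{BJM}; the cell of $D$ in dimension $84$ attached by $2$ then realizes the outer $\Sq^1$, giving $\Sq^{64}\neq 0$ on the degree-$20$ class. Without this (or an equivalent secondary-operation computation), your argument assumes the conclusion of the downstream Lemmas \ref{lem:pi-f/L} and \ref{lem:a63,1} rather than proving Lemma \ref{lem:Sq64}.
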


\begin{proof}
We work in the classical context; the synthetic result that we desire
follows immediately by comparison along the realization functor, i.e.,
along $\lambda$-localization.

Recall Adams's decomposition 
$$\Sq^{64} = \Sq^1 \Phi_{5,5} + \cdots$$
of $\Sq^{64}$ 
in terms of secondary operations \cite{Adams60}*{Theorem 4.6.1}.
The secondary operation $\Phi_{5,5}$
is non-zero on the cofiber of $\theta_5$ \cite{BJM}*{p.\ 536}.
\end{proof}

\begin{lemma}
\label{lem:pi-f/L}
The map
$$ \pi f /\lambda: 
S^{83,5}/\lambda \map \Xonefour /\lambda$$
is equal to
$$(h_0, h_6): S^{83,5}/\lambda \map  S^{83,4}/\lambda \vee S^{20,4}/\lambda.$$
\end{lemma}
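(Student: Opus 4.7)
The plan is to use the splitting $C/\lambda \simeq S^{83,4}/\lambda \vee S^{20,4}/\lambda$ from Remark \ref{rmk:split/L} to decompose $\pi f/\lambda$ into its two components
\[
c_{83} : S^{83,5}/\lambda \to S^{83,4}/\lambda, \qquad c_{20} : S^{83,5}/\lambda \to S^{20,4}/\lambda,
\]
and identify each separately. By degree considerations, $c_{83}$ lies in $\Ext_A^{1,1} = \F_2\{h_0\}$ and $c_{20}$ lies in $\Ext_A^{1,64} = \F_2\{h_6\}$, so each component is either zero or its unique nonzero generator. The task is to rule out the zero possibility in each case.

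For $c_{83}$, I would compute the composite $p_C \pi f$, where $p_C : C \to S^{83,4}$ is the projection to the top cell of $C$. Because $p_C \cdot i_C = 0$ in the cofiber sequence defining $C$, the composite $p_C \pi$ vanishes on the bottom cells of $X$ by the defining property of $\pi$ from Step \ref{step:X1,4}, and therefore factors as $\bar\pi \cdot j$ for some $\bar\pi : Q \to S^{83,4}$, where $j : X \to Q$ is the canonical projection. The indeterminacy in the construction of $\pi$ (ambiguity by elements of $[Q,C]$) allows me to arrange $\bar\pi = p_Q$. Then $p_C \pi f = p_Q \cdot j \cdot f = p_Q \cdot f_Q = \hsf$ by Step \ref{step:f[2,5]}, and since $\hsf$ is detected by $h_0$, this forces $c_{83} = h_0$.

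For $c_{20}$, the main step, I would argue by contradiction using Lemma \ref{lem:Sq64}. Suppose $c_{20} = 0$. Then in the splitting of $C/\lambda$, the map $\pi f/\lambda$ has the form $(c_{83}, 0)$, and hence the cofiber $D/\lambda$ splits as a wedge $\mathrm{cofib}(c_{83}) \vee S^{20,4}/\lambda$ in the category of $S^{0,0}/\lambda$-modules. Under the equivalence between this category and the derived category of modules over the Steenrod algebra $A$, this splitting corresponds to a splitting of the classical cohomology $H^*(D)$ as an $A$-module with $\Sigma^{20}\F_2$ appearing as a direct summand. But then $\Sq^{64}$ applied to the degree-$20$ generator of this summand must land in degree $84$ of the same summand, which is zero, contradicting Lemma \ref{lem:Sq64}. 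Therefore $c_{20} = h_6$, completing the proof.

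The hard part is the second step, which rests on translating the $S^{0,0}/\lambda$-module splitting of $D/\lambda$ into an $A$-module splitting of $H^*(D)$ so that Lemma \ref{lem:Sq64} can be brought to bear; the first step is essentially bookkeeping once $\pi$ is adjusted within its indeterminacy.
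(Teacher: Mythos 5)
Your proposal is correct and takes essentially the same route as the paper: both arguments reduce the statement to the two Ext components of $\pi f/\lambda$, obtain the $h_0$ component from the fact that the $(83,4)$-component of $\pi f$ is $\hsf$ (detected by $h_0$), and obtain the $h_6$ component by applying Lemma \ref{lem:Sq64} to the cofiber $D$ of $\pi f$. Your contradiction-via-splitting step is just a rephrasing of the paper's direct reading that the bottom cell of $D/\lambda$ is attached to the top cell by $h_6$, and your adjustment of $\pi$ so that $\bar\pi = p_Q$ is cell-level bookkeeping that the paper leaves implicit in its cell-diagram description of $\pi f$ (Figure \ref{fig:pi-f}).
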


Here we are using the splitting of $\Xonefour/\lambda$ given in Remark \ref{rmk:split/L}.
Figure \ref{fig:pi-f} illustrates Lemma \ref{lem:pi-f/L} with
cell diagrams.

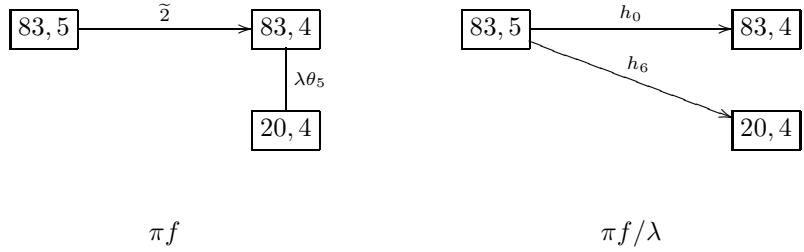
\begin{figure}[h!]
\caption{The maps $\pi f$ and $\pi f / \lambda$ \label{fig:pi-f}}
\[
\xymatrix{
*+[F]{83,5} \ar[rr]^-{\hsf} && *+[F]{83,4} \ar@{-}[d]^-{\lambda \theta_5} && *+[F]{83,5} \ar[rr]^-{h_0} \ar[rrd]^-{h_6}  && *+[F]{83,4} \\
 && *+[F]{20,4} && && *+[F]{20,4} \\
 & \pi f & && & \pi f / \lambda
}
\]
\end{figure}

\begin{proof}
Unlike all of the other $S^{0,0}/\lambda$-modules under consideration,
$D/\lambda$ does not split.  In fact, the middle cell is attached
to the top cell by $h_0$ because $\hsf$ is detected by $h_0$.
Lemma \ref{lem:Sq64} implies that 
$\Sq^{64}$ is non-zero on $D/\lambda$.
Therefore, the bottom cell is attached to the top cell by $h_6$.
A cell diagram for $D/\lambda$ is shown on the right side
of Figure \ref{fig:D}.

The cofiber sequence 
\[
\xymatrix{
S^{83,5} \ar[r]^-{\pi f} & \Xonefour \ar[r] & D
}
\]
that defines $D$ induces a cofiber sequence
\[
\xymatrix{
S^{83,5}/\lambda \ar[r]^-{\pi f/\lambda} & \Xonefour/\lambda \ar[r] & D/\lambda.
}
\]
The computation of the attaching maps for $D/\lambda$ is equivalent
to the computation of $\pi f/\lambda$.
\end{proof}

\begin{lemma}
\label{lem:a63,1}
The value of $a_{63,1}$ is $h_6$.
\end{lemma}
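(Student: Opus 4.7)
The plan is to extract $a_{63,1}$ from the map $f/\lambda$ by projecting through $\pi/\lambda$, and then compare with Lemma \ref{lem:pi-f/L}. Since $X/\lambda$ splits as a wedge of six suspensions of $S^{0,0}/\lambda$ (Proposition \ref{prop:split/L}) and $\Xonefour/\lambda$ splits as a wedge of two suspensions (Remark \ref{rmk:split/L}), the map $\pi/\lambda$ is described by a $2 \times 6$ matrix whose entries are elements of the classical Adams $E_2$-page. I want to argue that the matrix entry corresponding to the composition $S^{20,4}/\lambda \hookrightarrow X/\lambda \xrightarrow{\pi/\lambda} \Xonefour/\lambda \twoheadrightarrow S^{20,4}/\lambda$ is the identity, while every other matrix entry into the $S^{20,4}/\lambda$ summand either has zero source in $f/\lambda$ or is itself forced to be zero by degree considerations.

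First I would analyze $\pi/\lambda$ one cell at a time. By construction (Step \ref{step:D}), $\pi$ restricted to the subcomplex $S^{20,4} \vee S^{0,-2} \hookrightarrow X$ is the map $(i_\Xonefour, 0)$, so after $/\lambda$ the restriction to the $S^{20,4}/\lambda$ summand is the identity inclusion, and the restriction to the $S^{0,-2}/\lambda$ summand is zero. For the four remaining summands $S^{3,1}/\lambda$, $S^{81,6}/\lambda$, $S^{83,4}/\lambda$, $S^{83,7}/\lambda$ of $X/\lambda$, I would check the potential contribution to the $S^{20,4}/\lambda$ summand of $\Xonefour/\lambda$: these correspond to Adams $E_2$-page elements in stems $-17$, $61$, $63$, $63$ and filtrations $-3$, $2$, $0$, $3$ respectively. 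The $-17$ stem vanishes trivially, and the filtration $0$ group in stem $63$ is zero, so only the $(61,2)$ and $(63,3)$ entries remain a priori possible.

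Next I would eliminate the contribution of these remaining entries by invoking the vanishing of the corresponding $a_{s,f}$'s. From the analysis at the beginning of the proof of Lemma \ref{lem:gf/L}, $a_{2,-1}$ and (correcting the typo) $a_{0,-2}$ already vanish for filtration reasons; and $a_{0,1} = h_0$ composed with the $(63,3)$ matrix entry lies in the $S^{20,4}/\lambda$ component. So the $(h_0) \cdot (\text{entry})$ contribution must be computed, but since the only possible entries in stem $63$, filtration $3$ are $h_0$-multiples of lower-filtration classes (which are already shown to vanish), this contribution vanishes. Thus the only surviving contribution to the $S^{20,4}/\lambda$ component of $\pi f/\lambda$ is $a_{63,1}$ itself, and Lemma \ref{lem:pi-f/L} identifies this component with $h_6$.

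The main obstacle I anticipate is the bookkeeping on the cross-terms, specifically checking that the entries of the $\pi/\lambda$ matrix mapping the high-filtration summands $S^{83,4}/\lambda$ and $S^{81,6}/\lambda$ of $X/\lambda$ into the $S^{20,4}/\lambda$ summand of $\Xonefour/\lambda$ either vanish or combine with the $a_{s,f}$'s to yield zero. Once the Adams $E_2$-page is consulted in stem $61$ and stem $63$ at low filtrations, this should reduce to a direct check, but it is the one place where a careful chart lookup is genuinely needed rather than a formal argument.
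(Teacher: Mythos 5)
Your route is the same as the paper's: the paper simply observes that $\pi/\lambda$ is the standard projection of $X/\lambda$ onto the wedge summands $S^{83,4}/\lambda$ and $S^{20,4}/\lambda$ and reads off $a_{63,1}=h_6$ from Lemma \ref{lem:pi-f/L}; your $2\times 6$ matrix bookkeeping is an expanded version of that observation, and the skeleton (restriction of $\pi$ to $S^{20,4}\vee S^{0,-2}$ is $(i_\Xonefour,0)$ by Step \ref{step:X1,4} --- not Step \ref{step:D} --- plus elimination of cross-terms) is sound.

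One step of your elimination is garbled, though. The $(63,3)$ entry of $\pi/\lambda$ is the component $S^{83,7}/\lambda \to S^{20,4}/\lambda$, and in the composite $(\pi/\lambda)\circ(f/\lambda)$ it multiplies the component of $f/\lambda$ landing in $S^{83,7}/\lambda$, namely $a_{0,-2}$, not $a_{0,1}=h_0$; the class $h_0$ instead multiplies the $(63,0)$ entry coming from $S^{83,4}/\lambda$, which you already killed since filtration-$0$ Ext vanishes in positive stems. Moreover, your stated reason for discarding the $(63,3)$ term --- that stem-$63$, filtration-$3$ classes are ``$h_0$-multiples of lower-filtration classes which are already shown to vanish'' --- is false as written: $h_0^2 h_6$ is a nonzero class there and $h_0 h_6\neq 0$ in the $E_2$-page; what vanish are the components $a_{2,-1}$ and $a_{0,-2}$ of $f/\lambda$, not any Ext groups. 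The correct (and immediate) fix is already contained in your setup: the $(63,3)$ entry is composed with $a_{0,-2}=0$ and the $(61,2)$ entry with $a_{2,-1}=0$, so the only surviving contribution to the $S^{20,4}/\lambda$ component of $\pi f/\lambda$ is $a_{63,1}$ itself, which Lemma \ref{lem:pi-f/L} identifies with $h_6$. With that pairing straightened out, your argument is complete and agrees with the paper's.
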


\begin{proof}
The map $\pi/\lambda: X/\lambda \map \Xonefour/\lambda$
is the standard projection onto two wedge summands of
$X/\lambda$.  The computation of
$\pi f/\lambda$ in Lemma \ref{lem:pi-f/L} 
gives that $a_{63,1}$ is equal to $h_6$.
\end{proof}

\begin{thm} \label{thm:perm-h6g+h2e2}
The composition $g f: S^{83,5} \map S^{0,0}$
is detected by $h_6 g + h_2 e_2$ in the synthetic Adams spectral
sequence.
\end{thm}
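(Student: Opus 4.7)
The plan is to compute $gf/\lambda$ in the classical Adams $E_2$-page and then lift the answer to a detection statement for $gf$ itself in the synthetic Adams spectral sequence. By Lemma \ref{lem:gf/L} and Lemma \ref{lem:a63,1}, we already have
\[
gf/\lambda = h_6 g + h_2 \, a_{80,4}
\]
in the classical Adams $E_2$-page, so everything reduces to identifying $h_2 \, a_{80,4}$ with $h_2 \, e_2$.

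To pin down $a_{80,4}$, I would construct one more auxiliary quotient of $X$, analogous to the complex $D$ used in the proof of Lemma \ref{lem:Sq64} and Lemma \ref{lem:pi-f/L}. The key point is that $a_{80,4}$ is exactly the component of $f/\lambda$ going into the $(3,1)$-cell of $X/\lambda$ under the splitting of Proposition \ref{prop:split/L}, so the attaching data of the quotient (after smashing with $S^{0,0}/\lambda$) reads off this element. Comparing with the relations in the classical $E_2$-page in stem $80$, filtration $4$ and the neighboring bidegrees---in particular, using the relation $h_5^2 g = h_2 x_1$ together with $d_2(e_2) = h_0 x_1$, which were the motivation for the appearance of $h_2 e_2$ in the first place---should force $h_2 \, a_{80,4} = h_2 \, e_2$. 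Substituting this back yields $gf/\lambda = h_6 g + h_2 e_2$ in the classical $E_2$-page.

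For the lifting step, let $z$ be any element of the synthetic Adams $E_\infty$-page detecting $gf$; then $z$ has stem $83$ and synthetic degree $5$, and the $\lambda$-reduction map sends $z$ to $gf/\lambda = h_6 g + h_2 e_2$. Since the synthetic Adams spectral sequence of $S^{0,0}/\lambda$ collapses to the classical Adams $E_2$-page, and since $h_6 g + h_2 e_2$ is the unique $\F_2[\lambda]$-module generator in this tridegree with no higher-filtration class available to account for $gf/\lambda$, we conclude $z = h_6 g + h_2 e_2$.

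The hardest step is the identification of $a_{80,4}$. Unlike the case $a_{63,1} = h_6$, which was extracted from Adams's classical secondary operation $\Phi_{5,5}$ on the cofiber of $\theta_5$, the element $a_{80,4}$ sits behind two layers of attaching maps in $X$---namely $\lambda\mu$ and $\lambda^2\eta$ (see Steps \ref{step:X[2,4]}--\ref{step:X[0,5]})---so its computation requires careful bookkeeping of the null-homotopy chosen in Step \ref{step:X[2,4]} and the compatible choices of $\alpha$ and $\beta$ made in Steps \ref{step:f[2,5]} and \ref{step:g[0,4]}.
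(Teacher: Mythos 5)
There is a genuine gap, and it sits exactly where you admit the argument is hardest: the identification $h_2\,a_{80,4} = h_2\,e_2$. Your proposal to read off $a_{80,4}$ from an auxiliary quotient of $X$ analogous to $D$ is only a sketch ("should force"), and it is not likely to work as stated: the component of $f/\lambda$ into the $(3,1)$-cell sits behind the two attaching maps $\lambda\mu$ and $\lambda^2\eta$, so the analogue of the $D$-argument would require controlling a tertiary-type operation rather than a known secondary operation like $\Phi_{5,5}$; no such computation is available. The relations $h_5^2 g = h_2 x_1$ and $d_2(e_2) = h_0 x_1$ are purely motivational and do not pin down $a_{80,4}$. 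In fact the paper never computes $a_{80,4}$ at all: it is only determined up to the ambiguity $h_1^2 h_4 h_6$, and the remark following the theorem makes this explicit.

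The step you are missing is an indirect one. By inspection of the classical $E_2$-page in stem $80$, filtration $4$, the element $a_{80,4}$ is a linear combination of $e_2$ and $h_1^2 h_4 h_6$, and $h_2\cdot h_1^2 h_4 h_6 = 0$, so $h_2\,a_{80,4}$ is either $0$ or $h_2 e_2$. Since $gf$ is an actual (synthetic) homotopy class, the element $h_6 g + h_2 a_{80,4}$ detecting it must be a permanent cycle; but $h_6 g$ alone is known not to be a permanent cycle by \cite{IWX}. Hence $h_2 a_{80,4}\neq 0$, forcing $h_2 a_{80,4} = h_2 e_2$ and $gf$ to be detected by $h_6 g + h_2 e_2$. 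Your reduction to $gf/\lambda = h_6 g + h_2 a_{80,4}$ via Lemmas \ref{lem:gf/L} and \ref{lem:a63,1}, and the passage from the mod-$\lambda$ computation to the synthetic detection statement, agree with the paper; it is this final elimination argument, not a direct computation of $a_{80,4}$, that completes the proof.
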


\begin{proof}
Combining Lemma \ref{lem:gf/L} with Lemma \ref{lem:a63,1},
we get that $g f/\lambda$ is
equal to $h_6 g + h_2 a_{80,4}$ for some value of $a_{80,4}$.
Therefore,
$g f$ is detected by $h_6 g + h_2 a_{80,4}$ in the synthetic
Adams $E_2$-page.

By inspection, $a_{80,4}$ must be a linear combination of 
$e_2$ and $h_1^2 h_4 h_6$.
Since $h_6 g$ itself is known to not
be a permanent cycle \cite{IWX}, 
we must have that $h_2 a_{80,4}$ is non-zero.  The only possibility
is that $h_2 a_{80,4}$ equals $h_2 e_2$.
\end{proof}

\begin{rmk}
The careful reader will note that we did not compute $a_{80,4}$.
It could be either $e_2$ or $e_2 + h_1^2 h_4 h_6$.  In either case,
we conclude that $h_6 g + h_2 e_2$ is a permanent cycle.
\end{rmk}

\section{Computations}
\label{sctn:computations}

\begin{prop}
\label{prop:d5-Ph5e0}
\lemdeg{56, 9, 9}
$d_5(P h_5 e_0) = \lambda^4 i l$.
\end{prop}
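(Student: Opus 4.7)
The plan is to prove this differential by realizing $Ph_5 e_0$ as a Massey product on the synthetic $E_3$-page and then computing $d_5$ via a bracket-juggling argument, using the synthetic version of Moss's convergence theorem.

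First, I would verify that $Ph_5 e_0$ survives to the $E_5$-page. The classical differential $d_3(e_0) = h_1^2 d_0$ lifts to the synthetic $d_3(e_0) = \lambda^2 h_1^2 d_0$. By the Leibniz rule, $d_3(Ph_5 e_0) = \lambda^2 h_1^2 \cdot Ph_5 d_0$, and a bidegree inspection on the $E_3$-page (using the IWX tabulation) confirms that this product vanishes. Analogous checks dispose of $d_2$ and $d_4$, giving survival to $E_5$.

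Moss's convergence theorem then implies that on $E_3$ the class $Ph_5 e_0$ lies in the Massey product $\langle Ph_5,\, \lambda^2 h_1^2,\, d_0 \rangle$, up to indeterminacy. To compute $d_5$ of this bracket, I would use a parallel Massey product presentation of $il$ involving $h_1^2$, $d_0$, and a class in the bidegree of $Ph_5$; bracket juggling, combined with the standard synthetic formula $d_r(x) = \lambda^{r-1} y$ specialized to $r=5$, then produces $d_5(Ph_5 e_0) = \lambda^4 il$. The coefficient $\lambda^4$ arises naturally from the $r=5$ case of the synthetic differential convention together with the $\lambda^2$ already present in the first bracket.

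The main obstacle I expect is indeterminacy control. In the target bidegree, one must enumerate all possible contributions to the Massey product on the $E_5$-page and rule them out using the detailed Adams chart data from IWX. This is a finite but delicate check, and the synthetic degree bookkeeping is what ultimately pins down the precise coefficient $\lambda^4$ rather than some other power. A secondary subtlety is ensuring that the bracket identities used for juggling hold not only on $E_2$ but persist to $E_3$, which should follow from the vanishing of the intermediate $d_2$-differentials in the relevant bidegrees.
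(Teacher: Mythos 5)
There is a genuine gap here: your plan never identifies the input that actually forces the differential to be nonzero. Two specific problems. First, the bookkeeping is off at the start: the relevant classical differential is $d_2(e_0) = h_1^2 d_0$ (synthetically $d_2(e_0) = \lambda h_1^2 d_0$), not a $d_3$, so the bracket you want to form would be $\langle Ph_5, \lambda h_1^2, d_0 \rangle$ with a different $\lambda$-power, and the claim that ``the coefficient $\lambda^4$ arises from the $r=5$ convention'' is circular --- the rule $d_r(x) = \lambda^{r-1}y$ translates a known classical $d_5$ into a synthetic one, which is exactly what is to be proved, not a device for producing it. Second, and more seriously, Moss's convergence theorem relates Massey products on $E_r$-pages to Toda brackets in homotopy; it does not provide a presentation of $Ph_5 e_0$ as an $E_3$-Massey product, and even granting such a presentation there is no general Leibniz-type formula that computes $d_5$ of an $E_3$-bracket by ``juggling'' it against a hypothetical parallel presentation of $il$ (which you never exhibit). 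The differential $d_5(Ph_5 e_0) = \lambda^4 il$ has genuine homotopy-theoretic content that cannot be extracted from $E_2$/$E_3$ algebra plus indeterminacy checks alone; the prior proof in the literature needed fourfold brackets for precisely this reason.

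The paper's argument supplies that missing content through two hidden extensions: the hidden $\eta$ extension from $g^2$ to $\lambda \D h_0^2 e_0$ (from the IWX tables), which after multiplication by $d_0$ gives a hidden $\eta$ extension from $e_0^2 g$ to $\lambda i l$, together with the hidden $\hsf$ extension from $\lambda^2 h_0 h_5 i$ to $\lambda^4 e_0^2 g$ coming from Burklund's $54$-stem result. These show that $\lambda^5 i l$ would detect a multiple of $\hsf \eta = 0$, so it must die, and a bidegree inspection shows $d_5(Ph_5 e_0)$ is the only possible source. If you want to salvage your approach, you would need to replace the vague ``parallel Massey product presentation of $il$'' with concrete homotopy-level input of this kind; as written, the survival-to-$E_5$ check is the only part of the proposal that goes through.
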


\begin{rmk}
Proposition \ref{prop:d5-Ph5e0} immediately follows from \cite{I20}*{Lemma 3.92}, which uses fourfold brackets.  We give a simpler proof.
\end{rmk}

\begin{proof}
Recall the hidden $\eta$ extension from $g^2$ to $\lambda \D h_0^2 e_0$
\cite{IWX}*{Table 18}.
Multiply by $d_0$ to see that there is also a hidden $\eta$ extension
from $e_0^2 g$ to $\lambda i l$.
Also, 
an immediate consequence of the main result of \cite{Burklund21} is that
there is a hidden $\hsf$ extension from
$\lambda^2 h_0 h_5 i$ to $\lambda^4 e_0^2 g$.

Therefore, $\lambda^5 i l$ detects a multiple of $\hsf \eta = 0$,
so it must be hit by a differential.  There is only one possibility.
\end{proof}

\begin{lemma}
\label{lem:eta,h^2kappabar2,h}
\lemdeg{46, 7, 7}
The Toda bracket
$\langle \eta, \hsf^2 \kappabar_2, \hsf \rangle$
is detected by $M h_1$. 
The indeterminacy is generated by 
$\lambda^3 \eta \{ \D h_1 g \}$, which is detected by $\lambda^4 d_0 l$.
\end{lemma}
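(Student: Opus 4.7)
The plan is to identify the Toda bracket with a Massey product in the synthetic Adams $E_2$-page via Moss's convergence theorem, and then determine the indeterminacy by direct inspection of the $E_\infty$-page.

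First I would verify that the bracket is defined. We need both $\eta \cdot \hsf^2 \kappabar_2 = 0$ and $\hsf^2 \kappabar_2 \cdot \hsf = \hsf^3 \kappabar_2 = 0$ in the synthetic stable stems. The first vanishes because $h_0 h_1 = 0$ on the $E_2$-page and a sweep of the $E_\infty$-page in the relevant synthetic bidegrees rules out any hidden extension that could revive the product. The second is checked analogously, using that $h_0^3 g_2$ vanishes (or is hit by an Adams differential) and that no hidden synthetic extension resurrects it in the appropriate degree.

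Next I would identify the bracket via the Massey product $\langle h_1, h_0^2 g_2, h_0 \rangle$ in the synthetic Adams $E_2$-page. Since the synthetic $E_2$-page is a free $\F_2[\lambda]$-module on the classical generators, this Massey product reduces to a cobar computation at the classical level; it identifies with $M h_1$ modulo indeterminacy lying in strictly higher Adams filtration. Applying the synthetic version of Moss's convergence theorem (with the null-homotopies at the correct filtration) then yields that $M h_1$ detects a representative of $\langle \eta, \hsf^2 \kappabar_2, \hsf \rangle$.

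Finally I would compute the indeterminacy $\eta \cdot \pi_{45, 6} + \pi_{46, 6} \cdot \hsf$. The second summand vanishes by inspection of the synthetic $E_\infty$-page (using \cite{IWX}), while the first is generated by $\eta \cdot \lambda^3 \{\D h_1 g\}$, since $\lambda^3 \{\D h_1 g\}$ is the only non-trivial class in $\pi_{45, 6}$ at the relevant synthetic degree. The identification of this generator with $\lambda^4 d_0 l$ follows by propagating the classical hidden $\eta$-extension from $g^2$ to $\lambda \D h_0^2 e_0$ (the starting point of the proof of Proposition \ref{prop:d5-Ph5e0}) to the $\D h_1 g$ summand by a multiplication-by-$d_0$ argument. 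The main obstacle will be the Massey product identification in the second step: $M h_1 \in \langle h_1, h_0^2 g_2, h_0 \rangle$ must either be verified by explicit cobar calculation or extracted from known Massey product tables in the cohomology of the Steenrod algebra, and the hypotheses for Moss's convergence theorem in the synthetic setting require care in tracking the Adams filtrations and synthetic degrees of the relevant null-homotopies.
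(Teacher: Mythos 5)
Your main line of argument coincides with the paper's: the paper quotes the Massey product $M h_1 = \langle h_1, h_0^2 g_2, h_0 \rangle$ from \cite{IWX}*{Table 3}, applies the Moss convergence theorem to conclude that $M h_1$ detects $\langle \eta, \hsf^2 \kappabar_2, \hsf \rangle$, and computes the indeterminacy by inspection; your first three steps are exactly this, with the well-definedness checks made explicit.

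The one place your proposal goes astray is the justification of the final claim that $\lambda^3 \eta \{\D h_1 g\}$ is detected by $\lambda^4 d_0 l$. Multiplying the hidden $\eta$ extension from $g^2$ to $\lambda \D h_0^2 e_0$ by $d_0$ produces a hidden $\eta$ extension from $e_0^2 g$ in the $54$-stem to $\lambda i l$ in the $55$-stem --- that is precisely the input to Proposition \ref{prop:d5-Ph5e0}, and it says nothing about the $45$- and $46$-stem classes $\D h_1 g$ and $d_0 l$; there is no multiplication-by-$d_0$ route from $g^2$ (stem $40$) to $\D h_1 g$ (stem $45$), so this propagation argument cannot establish the detection statement. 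The detection of $\eta \{\D h_1 g\}$ by $d_0 l$ is instead a piece of known $\eta$-extension data in the $45$-stem to be read off directly from \cite{IWX} (this is what the paper's ``computed by inspection'' amounts to), combined with the observation that $\pi_{46,6} \cdot \hsf$ contributes nothing to the indeterminacy. With that citation substituted for your $d_0$-propagation step, your proof is complete and agrees with the paper's.
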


\begin{rmk}
The Toda bracket in Lemma \ref{lem:eta,h^2kappabar2,h} has a classical
analogue $\langle \eta, 4 \kappabar_2, 2 \rangle$.
The classical bracket contains zero because $\eta \theta_{4.5}$
is detected by $M h_1$.
Synthetically, the product $\eta \theta_{4.5}$ is detected by
$\lambda^3 M h_1$, but $M h_1$ itself does not detect a multiple
of $\eta$.
\end{rmk}

\begin{proof}
Start with
the Massey product $M h_1 = \langle h_1, h_0^2 g_2, h_0 \rangle$
\cite{IWX}*{Table 3}.
Apply the Moss Convergence Theorem \cite{BK21} \cite{Moss70}  to
obtain that 
the Toda bracket
$\langle \eta, \hsf^2 \kappabar_2, \hsf \rangle$ is detected by
$M h_1$.
The indeterminacy is computed by inspection.
\end{proof}

\begin{lemma}
\label{lem:h^2kappabar2,h,Dh1h3}
\lemdeg{77, 12, 12}
The Toda bracket
$\langle \hsf^2 \kappabar_2, \hsf, \{\D h_1 h_3\} \rangle$
is detected by the element $M \D h_1 h_3$, with no indeterminacy.
\end{lemma}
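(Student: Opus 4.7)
The plan is to mirror the proof of Lemma \ref{lem:eta,h^2kappabar2,h}, namely, to identify $M \D h_1 h_3$ as a Massey product on the Adams $E_2$-page, apply the Moss Convergence Theorem, and then verify that the indeterminacy vanishes by inspection of the $E_\infty$-page in the relevant degrees.

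First I would establish the Massey product identity
\[
M \D h_1 h_3 = \langle h_0^2 g_2, h_0, \D h_1 h_3 \rangle
\]
on the Adams $E_2$-page. This should follow from the known Massey product $M = \langle g_2, h_0, h_0^2 g_2, h_0 \rangle$ or, more directly, by shuffling the threefold Massey product $M h_1 = \langle h_1, h_0^2 g_2, h_0 \rangle$ of \cite{IWX}*{Table 3} combined with the factorization of $M h_1 h_3 \cdot \D$ through the relation $h_1 \cdot \D h_1 h_3 = 0$ witnessing the null-composition at the right entry. Concretely, one checks that $h_0 \cdot \D h_1 h_3 = 0$ and $h_0^2 g_2 \cdot \D h_1 h_3 = 0$ on the $E_2$-page, so the threefold bracket is defined, and a cobar-level representative computation identifies its value as $M \D h_1 h_3$.

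Next I would invoke Moss's Convergence Theorem \cite{BK21} \cite{Moss70} to translate this Massey product into the statement that the Toda bracket $\langle \hsf^2 \kappabar_2, \hsf, \{\D h_1 h_3\}\rangle$ is detected by $M \D h_1 h_3$. The hypothesis of Moss's theorem is the absence of crossing differentials on the three inputs, which I would check by consulting the known differentials in the ranges of $h_0$, $h_0^2 g_2$, and $\D h_1 h_3$ as recorded in \cite{IWX}.

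Finally, for the indeterminacy, I would argue that both summands of the Toda bracket indeterminacy vanish. The indeterminacy consists of $\hsf^2 \kappabar_2 \cdot \pi_{31,5}$ together with $\{\D h_1 h_3\} \cdot \pi_{7,5}$ (and $\lambda$-multiples that drop filtration). A quick inspection of the synthetic Adams $E_\infty$-page in stems $31$ and $7$ at the appropriate filtrations shows that every candidate product lies strictly above the filtration of $M \D h_1 h_3$ or is already known to vanish; in particular, the analog of the $\lambda^3 \eta \{\D h_1 g\}$ contribution that appeared in Lemma \ref{lem:eta,h^2kappabar2,h} has no counterpart here because the corresponding element is zero in this bidegree. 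The main obstacle I expect is bookkeeping: verifying the no-crossing-differentials hypothesis of Moss's theorem across several possible $d_r$'s on $\D h_1 h_3$-related classes, and ruling out every element in the indeterminacy range one by one.
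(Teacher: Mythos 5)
Your route---identify $M \D h_1 h_3$ as an $E_2$-page Massey product and apply the Moss Convergence Theorem---is genuinely different from the paper's, but as written it has gaps at exactly the load-bearing points. The central claim, that $M \D h_1 h_3 = \langle h_0^2 g_2, h_0, \D h_1 h_3 \rangle$ on the $E_2$-page, is only asserted; it is not among the Massey products recorded in \cite{IWX}, and the shuffle you gesture at from $M h_1 = \langle h_1, h_0^2 g_2, h_0 \rangle$ only yields that \emph{some} element $x$ of $\langle h_0^2 g_2, h_0, \D h_1 h_3 \rangle$ satisfies $h_1 x = M h_1 \cdot \D h_1 h_3$; one must then still inspect the $E_2$-page in the $77$-stem, filtration $12$, to identify $x$ as $M \D h_1 h_3$ modulo indeterminacy. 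Relatedly, you verify the wrong vanishing conditions: definedness of $\langle h_0^2 g_2, h_0, \D h_1 h_3 \rangle$ requires $h_0^2 g_2 \cdot h_0 = h_0^3 g_2 = 0$ and $h_0 \cdot \D h_1 h_3 = 0$; the product $h_0^2 g_2 \cdot \D h_1 h_3$ of the two outer entries is irrelevant, so the check you propose does not establish that the bracket is defined. Finally, your indeterminacy bookkeeping is off: for $a = \hsf^2 \kappabar_2 \in \pi_{44,6}$, $b = \hsf \in \pi_{0,1}$, $c \in \{\D h_1 h_3\} \subseteq \pi_{32,6}$, the indeterminacy is $\hsf^2 \kappabar_2 \cdot \pi_{33,6} + \pi_{45,6} \cdot \{\D h_1 h_3\}$, not products with $\pi_{31,5}$ and $\pi_{7,5}$, so the inspection you describe would be carried out in the wrong degrees.

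For comparison, the paper avoids introducing any new Massey product or invoking Moss here. It multiplies by $\eta$ and shuffles: $\eta \langle \hsf^2 \kappabar_2, \hsf, \{\D h_1 h_3\} \rangle = \langle \eta, \hsf^2 \kappabar_2, \hsf \rangle \{\D h_1 h_3\}$, which by Lemma \ref{lem:eta,h^2kappabar2,h} is detected by the nonzero product $M h_1 \cdot \D h_1 h_3$; this forces the original bracket to be detected by $M \D h_1 h_3$, and the indeterminacy is zero by inspection. Your approach could in principle be repaired, but the Massey product identification, the correct definedness conditions, and the corrected indeterminacy computation would all have to be carried out rather than assumed, whereas the paper's shuffle reduces everything to the bracket already established in Lemma \ref{lem:eta,h^2kappabar2,h}.
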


\begin{proof}
The shuffle
\[
\eta \langle \hsf^2 \kappabar_2, \hsf, \{\D h_1 h_3\} \rangle =
\langle \eta, \hsf^2 \kappabar_2, \hsf \rangle \{\D h_1 h_3\}
\]
shows that 
$\eta \langle \hsf^2 \kappabar_2, \hsf, \{\D h_1 h_3\} \rangle$
is detected by $M h_1 \cdot \D h_1 h_3$.  
Here we are using the Toda bracket from Lemma \ref{lem:eta,h^2kappabar2,h}.
It follows that
$\langle \hsf^2 \kappabar_2, \hsf, \{\D h_1 h_3\} \rangle$
is detected by $M \D h_1 h_3$.

The indeterminacy is zero by inspection.
\end{proof}

\begin{prop}
\label{prop:nu-MDh1h3}
\lemdeg{77, 12, 12}
There is a hidden $\nu$ extension from 
$M \D h_1 h_3$ to $\lambda^2 M h_1 e_0^2$.
\end{prop}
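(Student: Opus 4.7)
The plan is to leverage the Toda bracket representation provided by Lemma \ref{lem:h^2kappabar2,h,Dh1h3}, namely $\{M \Delta h_1 h_3\} \in \langle \hsf^2 \kappabar_2, \hsf, \{\Delta h_1 h_3\} \rangle$ with zero indeterminacy. Multiplying by $\nu$ and applying the standard Toda bracket shuffle that moves a factor from outside the bracket into its first slot yields
\[
\nu \cdot \{M \Delta h_1 h_3\} \in \langle \nu, \hsf^2 \kappabar_2, \hsf \rangle \cdot \{\Delta h_1 h_3\},
\]
modulo indeterminacy. The shuffle is legitimate provided the two boundary products $\nu \cdot \hsf^2 \kappabar_2$ and $\hsf \cdot \{\Delta h_1 h_3\}$ vanish, both of which I would confirm by inspecting the Adams chart in the relevant stems together with the known hidden extensions recorded in \cite{IWX}.

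Next I would identify the bracket $\langle \nu, \hsf^2 \kappabar_2, \hsf \rangle$ via the Moss Convergence Theorem applied to the Massey product $\langle h_2, h_0^2 g_2, h_0 \rangle$ in the Adams $E_2$-page, exactly mirroring the argument for Lemma \ref{lem:eta,h^2kappabar2,h}, where the analogous Massey product $\langle h_1, h_0^2 g_2, h_0 \rangle$ equals $M h_1$. One expects the resulting detecting class to be (a multiple of) $M h_2$ or a closely related element in the same tridegree. Multiplying by $\{\Delta h_1 h_3\}$ then reduces the problem to identifying the product of this detecting class with $\Delta h_1 h_3$ in the $E_\infty$-page, and matching it with $\lambda^2 M h_1 e_0^2$ via a relation of the form $h_2 \cdot \Delta h_1 h_3 \equiv \lambda^2 h_1 e_0^2$ modulo higher Adams filtration (which would account for the explicit $\lambda^2$ in the target). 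Throughout, $M$ passes through as a multiplier since it is a permanent cycle.

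The main obstacle is the last step: pinning down precisely what $\langle \nu, \hsf^2 \kappabar_2, \hsf \rangle \cdot \{\Delta h_1 h_3\}$ is in stem $80$, synthetic degree $14$, and Adams filtration $14$, and verifying that, up to the indeterminacy of the shuffle and the Toda bracket, it coincides with $\lambda^2 \{M h_1 e_0^2\}$ and with no other element in the same group. This will require careful bookkeeping of the Adams chart in that tridegree together with any hidden extensions into $\lambda$-torsion classes.

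If the direct shuffle approach leaves too much slack (for example if the indeterminacy of $\langle \nu, \hsf^2 \kappabar_2, \hsf \rangle$ is uncomfortably large once multiplied by $\{\Delta h_1 h_3\}$), the backup plan is to construct a small cell complex in the spirit of Section \ref{sctn:h6g+h2e2}, built from explicit null-homotopies of $\nu \cdot \hsf^2 \kappabar_2$ and $\hsf \cdot \{\Delta h_1 h_3\}$, and then read off the value of $\nu \cdot \{M \Delta h_1 h_3\}$ after smashing with $S^{0,0}/\lambda$ to trivialize all attaching maps that are $\lambda$-multiples. The $/\lambda$ reduction together with Lemma \ref{lem:eta,h^2kappabar2,h} applied factorwise should then force the composite to land on $\lambda^2 M h_1 e_0^2$.
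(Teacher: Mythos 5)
Your opening move is the same as the paper's (start from Lemma \ref{lem:h^2kappabar2,h,Dh1h3} and shuffle $\nu$ into the bracket), but you shuffle in the wrong direction, and that is where the argument breaks down. You move $\nu$ into the \emph{first} slot, producing $\langle \nu, \hsf^2\kappabar_2, \hsf\rangle \cdot \{\D h_1 h_3\}$. This has three problems. First, for that bracket even to be defined you need $\nu\cdot\hsf^2\kappabar_2=0$ synthetically, which you have not verified (classical vanishing is not enough, since a $\lambda$-torsion value would obstruct the bracket). Second, your identification of $\langle \nu, \hsf^2\kappabar_2, \hsf\rangle$ via the Massey product $\langle h_2, h_0^2 g_2, h_0\rangle$ is a guess by analogy; nothing in the paper or in your argument pins down its value. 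Third, and most seriously, even granting a detecting class in filtration $7$ of the $48$-stem, its $E_2$-product with $\D h_1 h_3$ vanishes (since $h_2 h_3=0$, exactly as $h_2\cdot\D h_1 h_3=0$), so the final step ``match the product with $\lambda^2 M h_1 e_0^2$'' is itself a hidden extension in essentially the same tridegree as the statement you are trying to prove. Your proposed relation ``$h_2\cdot\D h_1 h_3 \equiv \lambda^2 h_1 e_0^2$ modulo higher filtration'' is not an $E_2$ (or $E_\infty$) relation at all; the true statement is the \emph{hidden} $\nu$ extension from $\D h_1 h_3$ to $\lambda^2 h_1 e_0^2$ in homotopy (\cite{IWX}*{Table 21}), a homotopy-level fact about $\nu\cdot\{\D h_1 h_3\}$ --- and by moving $\nu$ away from $\{\D h_1 h_3\}$ you have made it impossible to use. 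You flag this final identification as ``the main obstacle,'' which is accurate: it is the whole content of the proposition, and your plan relocates it rather than resolving it. The fallback cell-complex sketch is too vague to close the gap.

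The missing idea is to shuffle the other way: keep $\nu$ next to $\{\D h_1 h_3\}$. The paper writes
\[
\langle \hsf^2\kappabar_2, \hsf, \{\D h_1 h_3\}\rangle\,\nu
= \langle \hsf^2\kappabar_2, \hsf, \nu\{\D h_1 h_3\}\rangle
= \langle \hsf^2\kappabar_2, \hsf, \lambda^2\eta\kappa\kappabar\rangle
= \langle \hsf^2\kappabar_2, \hsf, \eta\rangle\,\lambda^2\kappa\kappabar,
\]
using exactly the known hidden $\nu$ extension on $\D h_1 h_3$ in the second equality (with the middle bracket having no indeterminacy), and then Lemma \ref{lem:eta,h^2kappabar2,h} identifies the remaining bracket as detected by $M h_1$, so the product is detected by the non-hidden product $M h_1\cdot\lambda^2 e_0^2 = \lambda^2 M h_1 e_0^2$. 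This direction converts the unknown hidden extension into a bracket already computed plus an ordinary $E_\infty$-page multiplication, which is precisely what your version fails to achieve.
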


\begin{rmk}
\label{rmk:nu-MDh1h3}
The hidden extension in Proposition \ref{prop:nu-MDh1h3}
is purely synthetic; it has no classical (nor motivic) analogue.
In computational terms, the target $\lambda^2 M h_1 e_0^2$ of the extension
is annihilated by $\lambda^2$, so it becomes zero after
$\lambda$-periodicization.

However, we do not know yet that $\lambda^2 M h_1 e_0^2$ is 
annihilated by $\lambda^2$.  That fact is a consequence
of Proposition \ref{prop:d5-A'}, whose proof depends on
Proposition \ref{prop:nu-MDh1h3}.
\end{rmk}

\begin{proof}
Lemma \ref{lem:h^2kappabar2,h,Dh1h3} shows that
the Toda bracket
$\langle \hsf^2 \kappabar_2, \hsf, \{\D h_1 h_3\} \rangle$ is detected
by $M \D h_1 h_3$.
We have
\[
\langle \hsf^2 \kappabar_2, \hsf, \{\D h_1 h_3\} \rangle \nu =
\langle \hsf^2 \kappabar_2, \hsf, \lambda^2 \eta \kappa \kappabar \rangle
\]
because of the hidden $\nu$ extension from $\D h_1 h_3$ to
$\lambda^2 h_1 e_0^2$ \cite{IWX}*{Table 21}.  
Equality holds in the displayed formula because the right side
has no indeterminacy by inspection.

We also have
\[
\langle \hsf^2 \kappabar_2, \hsf, \lambda^2 \eta \kappa \kappabar \rangle =
\langle \hsf^2 \kappabar_2, \hsf, \eta \rangle \lambda^2 \kappa \kappabar,
\]
Because of Lemma \ref{lem:eta,h^2kappabar2,h},
this last expression is detected by $M h_1 \cdot \lambda^2 e_0^2$.
\end{proof}

\begin{prop}
\label{prop:d5-A'}
\lemdeg{61, 6, 6}
$d_5(A') = \lambda^4 M h_1 d_0$.
\end{prop}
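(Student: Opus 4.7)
The plan is to combine a census of candidates for $d_5(A')$ with the hidden $\nu$ extension of Proposition \ref{prop:nu-MDh1h3} to pin down the differential. First I would inspect the synthetic $E_5$-page in the tri-degree of the target, and verify that the only non-zero candidate for $d_5(A')$ is $\lambda^4 M h_1 d_0$. Hence it suffices to rule out $d_5(A')=0$.

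To rule this out, I would argue by contradiction. Suppose $d_5(A')=0$, so $A'$ survives to $E_6$. By the Leibniz rule, products $A'\cdot z$ for any permanent cycle $z$ (such as $e_0$, $d_0$, or $g$, which detect $\overline{\epsilon}$, $\kappa$, and $\kappabar$ respectively) also survive to $E_6$. I would then identify a multiplicative relation in the Adams $E_2$-page (for instance via the standard relations governing $e_0 \cdot d_0$ and $d_0 \cdot g$) that connects some such product $A'\cdot z$ to an expression involving $M h_1 e_0^2$, or dually to $M \Delta h_1 h_3$. Combined with Proposition \ref{prop:nu-MDh1h3}, which provides the hidden extension detecting $\nu \cdot \{M \Delta h_1 h_3\}$ by $\lambda^2 M h_1 e_0^2$, this would force $\lambda^2 M h_1 e_0^2$ to behave in a way incompatible with the hypothesized survival of $A'$: specifically, the target $\lambda^2 M h_1 e_0^2$ would have to be both $\lambda$-torsion (from the hidden extension) and $\lambda$-periodic (from the propagated survival), a contradiction.

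The main obstacle is locating the precise multiplicative relation that drives the contradiction: one must identify the specific product $A' \cdot z$ whose Leibniz-rule differential, under the assumption $d_5(A')=0$, forces the incompatibility with Proposition \ref{prop:nu-MDh1h3}. Once that relation is secured, the Leibniz-rule bookkeeping and the comparison with the hidden $\nu$ extension are routine, and we conclude $d_5(A') = \lambda^4 M h_1 d_0$ as the unique remaining possibility.
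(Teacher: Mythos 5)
There is a genuine gap, and it sits exactly where you acknowledge it: you never produce the input that forces the target class to die, and that input is the entire content of the proof, not routine bookkeeping. The paper's argument is not a contradiction from the survival of $A'$; it shows directly that $\lambda^4 M h_1 e_0^2$ must be hit by a differential, as follows. By comparison to $\C$-motivic homotopy (\cite{IWX}*{Table 18}) there is a hidden $\eta$ extension from $x_{76,9}$ to $\lambda^2 M \D h_1 h_3$, and Proposition \ref{prop:nu-MDh1h3} gives a hidden $\nu$ extension from $\lambda^2 M \D h_1 h_3$ to $\lambda^4 M h_1 e_0^2$. Since $\nu\eta = 0$, the class $\lambda^4 M h_1 e_0^2$ would detect zero, so it must be hit; motivic comparison (\cite{IWX}*{Table 6}) rules out $d_3(\D^2 p)$ as the source, leaving $d_5(g A') = \lambda^4 M h_1 e_0^2$, and since $g$ is a permanent cycle with $d_0 g = e_0^2$ this forces $d_5(A') = \lambda^4 M h_1 d_0$. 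Your plan of ``locating a multiplicative relation connecting $A'\cdot z$ to $M h_1 e_0^2$'' is a placeholder for precisely this chain of hidden extensions together with the relation $\nu\eta=0$; without it, nothing rules out $d_5(A') = 0$.

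Moreover, the contradiction mechanism you sketch is not sound as stated. You claim that under the hypothesis $d_5(A')=0$ the class $\lambda^2 M h_1 e_0^2$ would be ``both $\lambda$-torsion (from the hidden extension) and $\lambda$-periodic (from the propagated survival).'' But Proposition \ref{prop:nu-MDh1h3} does not make $\lambda^2 M h_1 e_0^2$ $\lambda$-torsion: as Remark \ref{rmk:nu-MDh1h3} states explicitly, the fact that $\lambda^2 M h_1 e_0^2$ is annihilated by $\lambda^2$ is a \emph{consequence} of Proposition \ref{prop:d5-A'}, not an input to it, so using it would be circular. What the hidden $\nu$ extension gives you is only a statement about which homotopy classes $\lambda^2 M h_1 e_0^2$ detects (if it survives); to turn that into a contradiction you must exhibit a specific homotopy class in the relevant degree that is simultaneously detected by this element and equal to zero, which is exactly what the $\eta$ extension on $x_{76,9}$ and $\nu\eta=0$ accomplish. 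Your first step (a census showing $\lambda^4 M h_1 d_0$ is the only possible nonzero value of $d_5(A')$) is reasonable but does not address this, and so the proposal does not constitute a proof.
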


\begin{rmk}
Proposition \ref{prop:d5-A'} immediately follows from
\cite{WX17}*{Theorem 12.1}, which requires 8 pages of shuffling several Toda brackets in a delicate way.
We give a simpler proof.
\end{rmk}

\begin{proof}
We will show that $d_5(g A') = \lambda^4 M h_1 e_0^2$, from which the desired
formula follows immediately.

By comparison to motivic homotopy \cite{IWX}*{Table 18}, there is a hidden
$\eta$ extension from $x_{76,9}$ to $\lambda^2 M \D h_1 h_3$.
Proposition \ref{prop:nu-MDh1h3} shows that there is a hidden
$\nu$ extension from $\lambda^2 M \D h_1 h_3$ to
$\lambda^4 M h_1 e_0^2$.  

Therefore, $\lambda^4 M h_1 e_0^2$ must be hit by a differential, but 
there are two possibilities.
By comparison to motivic homotopy,
$d_3(\D^2 p)$ cannot equal $\lambda^2 M h_1 e_0^2$
\cite{IWX}*{Table 6}.
The only remaining possibility is that
$d_5(g A')$ equals $\lambda^4 M h_1 e_0^2$.
\end{proof}

\begin{prop}
\label{prop:nu-h1p1}
\lemdeg{71, 5, 5}
There is no hidden $\nu$ extension on $h_1 p_1$.
\end{prop}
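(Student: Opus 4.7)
The plan is to reduce the statement to the classical relation $\eta \nu = 0$ by exploiting the factorization $h_1 p_1 = h_1 \cdot p_1$ on the Adams $E_2$-page. Any homotopy class $\alpha \in \pi_{71}$ detected by $h_1 p_1$ should be expressible as $\alpha = \eta \beta + \gamma$, where $\beta$ is a homotopy class represented by $p_1$ (via a direct lift if $p_1$ is a permanent cycle on an appropriate page, or via a Moss convergence representation of $p_1$ as a Massey product otherwise), and $\gamma$ has strictly higher Adams filtration than $h_1 p_1$. Since $\nu \eta = 0$, we obtain $\nu \alpha = \nu \gamma$, so only the higher-filtration correction $\gamma$ can contribute.

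The next step is a finite inspection: I would enumerate the classes on the synthetic Adams $E_\infty$-page in stem $71$ above filtration $5$, using the data of \cite{IWX} together with the corrections to $\pi_{71}$ noted in the introduction, and check for each such $\gamma$ that $\nu \gamma$ is either zero or already detected in Adams filtration at most $6$ in stem $74$. The elements to consider are essentially $h_0$-towers, $P$-multiples, and $\lambda$-torsion classes, and their $\nu$-actions can be read off from the already-established hidden $\nu$ extensions or by direct multiplicative comparison with classes in lower stems.

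The main obstacle will be the bookkeeping in the first step, namely ensuring the reduction to $\eta\beta + \gamma$ form goes through cleanly in the synthetic setting, and handling the indeterminacy when $p_1$ is represented only as a Toda bracket rather than as a permanent cycle. In fact, in light of the differential $d_5(h_1 p_1) = \lambda^4 h_1 h_3(\Delta e_1 + C_0)$ established in \cite{LWX} and noted in the introduction, much of this bookkeeping collapses: $h_1 p_1$ has no classes in its native synthetic degree surviving to $E_\infty$, and the only surviving $\lambda$-multiples are multiples of $\eta$-detected classes, for which the vanishing of $\nu \eta$ immediately forces any hidden $\nu$ extension to be trivial.
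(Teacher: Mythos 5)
Your central reduction has a genuine gap: writing a class $\alpha$ detected by $h_1 p_1$ as $\eta\beta + \gamma$ with $\beta$ detected by $p_1$ presupposes that $p_1$ detects a homotopy class, i.e.\ that it is a permanent cycle (up to higher filtration). It is not. The element in that position which survives to the $E_5$-page is $p_1 + h_0^2 h_3 h_6$, and it supports a nonzero $d_5$ (this was already asserted in \cite{IWX}*{Table 8}, and the corrected value $\lambda^4 h_2^2 C' + \lambda^4 h_3(\D e_1 + C_0)$ comes from \cite{LWX}). So there is no class $\beta$, and a class detected by $h_1 p_1$ need not be an $\eta$-multiple modulo higher filtration; $h_1 p_1$ is exactly the delicate kind of product whose factors do not survive. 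Invoking Moss convergence does not repair this: Moss's theorem lets a surviving Massey product detect a Toda bracket, but it cannot manufacture a homotopy class detected by a non-surviving $p_1$. Even if some factorization were available, your ``finite inspection'' of $\nu\gamma$ over all higher-filtration $\gamma$ in the $71$-stem is essentially the same amount of work as the paper's argument, which instead enumerates the five candidate targets in the $74$-stem ($\lambda P h_0 h_2 h_6$, $\lambda^2 P h_1^3 h_6$, $\lambda^2 x_{74,8}$, $\lambda^8 \D^2 h_2^2 g$, $\lambda^{10} e_0^2 g^2$), uses the absence of a hidden $\hsf$ extension on $h_1 p_1$ (via \cite{IWX}*{Lemma 7.28}) to force any target to be killed by $\hsf$, and eliminates the remaining candidates by comparison to $S^{0,0}/\tau$ and to $\mmf$.

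Your fallback, appealing to the differential $d_5(h_1 p_1) = \lambda^4 h_1 h_3(\D e_1 + C_0)$, also does not do what you claim. Since the corresponding classical $d_5$ is nonzero, the target $h_1 h_3(\D e_1 + C_0)$ is $\lambda$-free on the $E_5$-page, so every $\lambda$-multiple $\lambda^k h_1 p_1$ supports a nonzero $d_5$ and nothing in that tower survives; in particular it is not the case that ``the only surviving $\lambda$-multiples are multiples of $\eta$-detected classes.'' What this observation would actually give is that the statement becomes vacuous at $E_\infty$, which is not a proof in the sense the paper intends (its argument is independent of that differential and proceeds by ruling out targets), and it rests entirely on the announced machine computation rather than on the structural inputs the paper uses.
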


\begin{proof}
The elements
$\lambda P h_0 h_2 h_6$, $\lambda^2 P h_1^3 h_6$, $\lambda^2 x_{74,8}$,
$\lambda^8 \D^2 h_2^2 g$, and $\lambda^{10} e_0^2 g^2$
are the possible values for a hidden $\nu$ extension on $h_1 p_1$.
As shown in \cite{IWX}*{Lemma 7.28}, the element $\tau h_1 p_1$ does not
support a motivic hidden $2$ extension.  Therefore,
$h_1 p_1$ does not support a classical hidden $2$ extension.
Also, $h_1 p_1$ does not support a synthetic hidden $\hsf$ extension
because all of the possible targets are $\lambda$-periodic.
Therefore, the target of a synthetic hidden $\nu$ extension
on $h_1 p_1$ must be annihilated by $\hsf$.
This rules out $\lambda P h_0 h_2 h_6$ and $\lambda^2 x_{74,8}$.

The remaining three cases would imply motivic
hidden $\nu$ extensions from $\tau h_1 p_1$ to
$P h_1^3 h_6$, $\tau \D^2 h_2^2 g$, and $\tau^5 e_0^2 g^2$
respectively.  The first is ruled out by comparison
to $S^{0,0}/\tau$, and the last two are ruled out by comparison to $\mmf$.
\end{proof}

\begin{prop}
\label{prop:h-h3n1}
\lemdeg{74, 6, 6}
There is a hidden $\hsf$ extension from $h_3 n_1$ to $\lambda x_{74,8}$.
\end{prop}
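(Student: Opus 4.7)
The plan is to express the homotopy class $\{h_3 n_1\}$ as a Toda bracket involving $\hsf$, and then shuffle with a second copy of $\hsf$ to produce an element detected in filtration $8$ by $\lambda x_{74,8}$. Since the filtration jump from $h_3 n_1$ (filtration $6$) to $x_{74,8}$ (filtration $8$) exceeds the filtration of $\hsf$ by one, the target of the synthetic $\hsf$-extension should carry exactly one factor of $\lambda$, consistent with the statement. Unlike the situation of Proposition \ref{prop:nu-MDh1h3}, the target is not $\lambda$-power torsion but merely off by one filtration, so a classical hidden $2$-extension is the underlying classical shadow.

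Concretely, I would first search the Adams $E_2$-page for a Massey product of the form $h_3 n_1 = \langle a, h_0, b \rangle$ (or a suitable matric Massey product of similar shape) and apply the Moss Convergence Theorem \cite{BK21} \cite{Moss70} to promote it to a synthetic Toda bracket representation $\{h_3 n_1\} \subseteq \langle \alpha, \hsf, \beta \rangle$, with $\lambda$-powers adjusted to match the synthetic degrees. Multiplying by $\hsf$ and juggling via
\[
\hsf \cdot \langle \alpha, \hsf, \beta \rangle \subseteq \langle \hsf, \alpha, \hsf \rangle \cdot \beta
\]
reduces the computation to evaluating the Toda bracket $\langle \hsf, \alpha, \hsf \rangle$ on the sphere (which should have a clean Massey product representative on the $E_2$-page) and then multiplying by $\beta$. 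Matching the resulting class against $\lambda x_{74,8}$ in the stem $74$, filtration $8$ column completes the identification.

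The main obstacle will be locating the correct Massey product and controlling the indeterminacy at each juggling step, since the stems involved are large and the Adams $E_2$-page is dense near $(74,8)$. If a direct Massey-product representation proves elusive, I would fall back to the rule-out-possible-targets strategy of Proposition \ref{prop:nu-h1p1}: enumerate the candidate targets for a hidden $\hsf$ extension on $h_3 n_1$, eliminate those that are $\lambda$-periodic (for then they would already appear classically and could be compared to known motivic data, e.g.\ via $S^{0,0}/\tau$ or $\mmf$), and argue that the remaining candidate must actually be hit, for instance by producing a lower-filtration witness whose $\hsf$-multiple cannot vanish. As a last resort, I would mimic the cell-complex constructions of Section \ref{subsctn:complex}, attaching cells along the relations needed to realize the bracket and reading off the extension from an explicit map.
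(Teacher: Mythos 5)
Your degree bookkeeping (one factor of $\lambda$, classical shadow a hidden $2$-extension) is right, but the proposal never supplies the positive input that forces the extension to exist, and that is the whole content of the statement. You say you ``would search'' for a Massey product $h_3 n_1 = \langle a, h_0, b \rangle$, but no such bracket is exhibited, no Moss hypotheses (crossing differentials, indeterminacy) are checked, and the subsequent shuffle only helps if $\langle \hsf, \alpha, \hsf \rangle$ can actually be evaluated and shown nonzero against $\lambda x_{74,8}$ in a dense region of the $E_\infty$-page. The known bracket-style proof (\cite{IWX}*{Lemma 7.35}) needs fourfold brackets, which is some evidence that a clean threefold decomposition of the kind you hope for is not available. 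Your first fallback is worse: the ``enumerate and rule out targets'' strategy of Proposition \ref{prop:nu-h1p1} can only prove the \emph{absence} of a hidden extension; to prove presence you must still produce a nonzero witness for $\hsf \cdot \{h_3 n_1\}$, which is exactly the step left unspecified (``producing a lower-filtration witness whose $\hsf$-multiple cannot vanish'' is the missing argument, not a method).

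The paper's mechanism is different and is worth contrasting with your plan: it works one stem lower and transports the extension by $\sigma$-multiplication. Let $\gamma \in \pi_{67,5}$ be detected by $Q_3 + n_1$. The two Adams differentials $d_3(d_2) = \lambda^2 h_0 (Q_3 + n_1)$ and $d_4(h_0 d_2) = \lambda^3 X_3$, fed through projection to the top cell of $S^{0,0}/\lambda$, manufacture a hidden $\hsf$ extension from $h_0^2(Q_3+n_1)$ to $\lambda X_3$ in the $67$-stem; then the $E_2$-relation $h_3 X_3 = h_0^2 x_{74,8}$ shows $\lambda h_0^2 x_{74,8}$ detects $\hsf^3 \sigma\gamma$, and since $\sigma\gamma$ is detected by $h_3 n_1$ the claimed extension follows. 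So the essential inputs are a specific pair of differentials plus the multiplicative relation $h_3 X_3 = h_0^2 x_{74,8}$, none of which appear in your outline. As written, your proposal is a strategy sketch with the decisive computation still to be found.
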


\begin{rmk}
Proposition \ref{prop:h-h3n1} follows from \cite{IWX}*{Lemma 7.35}, which uses fourfold
brackets.  We give a simpler proof.
\end{rmk}

\begin{proof}
Let $\gamma$ be an element of $\pi_{67,5}$ that is detected by 
$Q_3 + n_1$.
Because of the differentials $d_3(d_2) = \lambda^2 h_0 (Q_3 + n_1)$
and $d_4(h_0 d_2) = \lambda^3 X_3$,
projection to the top cell of $S^{0,0}/\lambda$ gives a
hidden $\hsf$ extension from $h_0^2 (Q_3 + n_1)$ to $\lambda X_3$.
Since $h_3 X_3 = h_0^2 x_{74,8}$, we conclude that
$\lambda h_0^2 x_{74,8}$ detects $\hsf^3 \sigma \gamma$.
This implies the desired hidden extension since
$\sigma \gamma$ is detected by $h_3 n_1$.
\end{proof}

\begin{prop} \label{prop:nu-m1}
\lemdeg{77,7,7}
There is no hidden $\nu$ extension on $m_1$.
\end{prop}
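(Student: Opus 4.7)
The plan is to read the claim straight off of Lemma~\ref{lem:mu}. That lemma produces a specific element $\mu \in \pi_{77,7}$ detected by $m_1$, one of whose listed properties is $\nu\mu = 0$. So for this particular lift of $m_1$, the $\nu$-product vanishes on the nose.

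To upgrade this to the statement about $m_1$ itself, I would then deal with the indeterminacy of the lift. Any other homotopy class detected by $m_1$ has the form $\mu + \epsilon$, where $\epsilon$ is detected in strictly higher Adams filtration in stem $77$; multiplying by $\nu$ raises filtration by at least one, so $\nu\epsilon$ lies in filtration $\geq 9$ in stem $80$. A hidden $\nu$ extension on $m_1$, by convention, would be a non-zero element of $\nu\{m_1\}$ detected in filtration strictly above $8$ (i.e., strictly higher than $h_2 m_1$). Since $0 = \nu\mu \in \nu\{m_1\}$, the coset $\nu\{m_1\}$ contains zero, so there is no hidden $\nu$ extension on $m_1$.

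I do not anticipate a major obstacle here: the entire content of the proposition is packaged into the property $\nu\mu = 0$ already demanded in the construction of $\mu$. In that sense, the real work will have been done earlier, in the proof of Lemma~\ref{lem:mu}. The only subtlety worth spelling out in the write-up is the indeterminacy remark above, to make precise that ``hidden extension on $m_1$'' is a statement about the coset $\nu\{m_1\}$ rather than about a single lift.
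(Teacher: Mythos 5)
There is a genuine gap: your argument is circular. In the paper, Lemma~\ref{lem:mu} comes \emph{after} Proposition~\ref{prop:nu-m1}, and its proof begins by invoking exactly this proposition: the existence of a lift $\mu$ of $m_1$ with $\nu\mu=0$ is \emph{deduced from} the absence of a hidden $\nu$ extension on $m_1$, not the other way around. (The paper is explicit that results in Section~\ref{sctn:computations} are ordered so that each computation depends only on previous ones.) So reading the proposition ``straight off of Lemma~\ref{lem:mu}'' defers all of the content to a statement that itself rests on the proposition you are trying to prove; your remark that ``the real work will have been done earlier, in the proof of Lemma~\ref{lem:mu}'' has the logical dependence backwards. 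Your secondary point about the indeterminacy of the lift is fine as a matter of the conventions for hidden extensions, but it does not supply the missing substance.

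What is actually needed, and what the paper does, is to rule out all potential targets directly. Since \cite{IWX} shows that $m_1$ supports no motivic (hence no classical) hidden $\nu$ extension, no $\lambda$-free element of the synthetic $E_\infty$-page can be the target of a hidden $\nu$ extension on $m_1$; one then checks by inspection of the $80$-stem that there are no $\lambda$-power-torsion candidates either. If you want to keep your framing, you must replace the appeal to Lemma~\ref{lem:mu} with an argument of this kind (comparison along $\lambda$-localization plus inspection of the torsion classes), since that is precisely where the proof lives.
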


\begin{proof}
As shown in \cite{IWX}, the element $m_1$ does not support
a motivic hidden $\nu$ extension, which means that $m_1$ does not
support a classical hidden $\nu$ extension.
Therefore, a $\lambda$-free element cannot be the target of a 
synthetic hidden $\nu$ extension on $m_1$.  By inspection, there are no possible
$\lambda$-torsion targets for a hidden $\nu$ extension on $m_1$.
\end{proof}

\begin{prop} \label{prop:eta-L^2m1}
\lemdeg{77,7,5}
There is no hidden $\eta$ extension on $\lambda^2 m_1$.
\end{prop}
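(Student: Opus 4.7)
The plan is to mimic the two-step strategy used in the proof of Proposition \ref{prop:nu-m1}. The source $\lambda^2 m_1$ has tridegree $(77, 7, 5)$, so the target of any hidden $\eta$ extension must live in stem $78$ and synthetic degree $6$. Because the synthetic differential $d_3(x_1) = \lambda^2 h_1 m_1$ kills the naive product $\lambda^2 h_1 m_1$ in the $E_\infty$-page, any such target must be detected in Adams filtration strictly greater than $8$.

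First I would rule out $\lambda$-free (that is, $\lambda$-periodic) targets by comparison with classical data. A $\lambda$-free synthetic class in tridegree $(78, \geq 9, 6)$ corresponds, after inverting $\lambda$, to the target of a classical hidden $\eta$ extension on $m_1$ of filtration at least $9$. The calculations of \cite{IWX} show that no such classical extension exists, so no $\lambda$-free synthetic class can detect $\lambda^2 \eta \mu$.

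Next I would rule out $\lambda$-power torsion targets by direct inspection. Any $\lambda^k$-torsion class in this tridegree arises as the target of a synthetic Adams differential of length $k+1$, so the list of candidates can be read off from the known differentials in and near this stem. A short case analysis then eliminates each one, typically either because the candidate is already known to be hit by a further differential, or because its $\lambda$-divisibility profile is incompatible with being $\eta$ times a class detected by $\lambda^2 m_1$.

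The main obstacle will be the second step: correctly enumerating every $\lambda$-torsion candidate in the relevant tridegree and confirming that none of them can serve as the target. This is a finite bookkeeping task using the synthetic Adams chart, but it requires care because the $\lambda$-torsion structure in this region is richer than the classical picture, and overlooking a single candidate would invalidate the argument.
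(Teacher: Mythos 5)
Your proposal follows the paper's own argument essentially verbatim: note that $h_1\cdot\lambda^2 m_1$ vanishes on the $E_\infty$-page because of $d_3(x_1)=\lambda^2 h_1 m_1$, rule out $\lambda$-free targets using the absence of a classical (equivalently, motivic) hidden $\eta$ extension on $m_1$ from \cite{IWX}, and dispose of $\lambda$-torsion targets by inspection of the chart. The paper does exactly this, also settling the torsion case simply by inspection, so your outline is correct and takes the same route.
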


\begin{proof}
The product $h_1 \cdot \lambda^2 m_1$ equals zero in the
synthetic Adams $E_\infty$-page because of the differential
$d_3(x_1) = \lambda^2 h_1 m_1$.

As shown in \cite{IWX}, the element $\tau m_1$ does not support a motivic hidden $\eta$ extension, which means that $m_1$ does not support a classical hidden $\eta$ extension.
Therefore, a $\lambda$-free element cannot be the target of a synthetic 
hidden $\eta$ extension on $\lambda^2 m_1$.
By inspection, there are no possible
$\lambda$-torsion targets for a hidden $\eta$ extension on $\lambda^2 m_1$.
\end{proof}

\begin{lemma} \label{lem:mu}
\lemdeg{77, 7, 7}
There exists an element $\mu$ in $\pi_{77,7}$ that
is detected by $m_1$ such that
$\lambda^2 \eta \mu$ and $\nu \mu$ are both zero.
\end{lemma}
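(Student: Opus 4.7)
The plan is to show that both required vanishings hold automatically for any element $\mu \in \pi_{77,7}$ detected by $m_1$, so that $\mu$ can be taken to be an arbitrary such lift rather than a specially constructed one. The argument consists of handling the $\eta$- and $\nu$-products separately, in each case splitting off a visible contribution (read from the $E_\infty$-page) and a hidden contribution (controlled by one of the two preceding propositions).

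For $\lambda^2 \eta \mu = 0$, I would invoke the synthetic Adams differential $d_3(x_1) = \lambda^2 h_1 m_1$ used in the proof of Proposition \ref{prop:eta-L^2m1}. This kills $\lambda^2 h_1 m_1$ on the $E_\infty$-page, so the leading-order contribution to $\eta \cdot (\lambda^2 \mu)$ already vanishes. Any remaining nonzero value of $\lambda^2 \eta \mu$ would then constitute a hidden $\eta$-extension on $\lambda^2 m_1$ in strictly higher Adams filtration, and Proposition \ref{prop:eta-L^2m1} explicitly rules out all such hidden extensions. Combining these two facts yields $\lambda^2 \eta \mu = 0$.

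For $\nu \mu = 0$, I would argue analogously: first check by inspection of the tables of \cite{IWX} that the candidate leading term $h_2 m_1$ in tridegree $(80,8,8)$ does not survive to the $E_\infty$-page, so that the visible contribution to $\nu \mu$ vanishes. Then Proposition \ref{prop:nu-m1}, which excludes both $\lambda$-free and $\lambda$-torsion targets for a hidden $\nu$-extension on $m_1$, forces $\nu \mu = 0$.

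The main (and essentially only nontrivial) obstacle is the bookkeeping verification that $h_2 m_1$ is killed in the Adams spectral sequence. This point is essential to the whole strategy: if $h_2 m_1$ were a permanent cycle, then $\nu \mu$ would have a nonzero detected value for every lift $\mu$ of $m_1$, and the lemma would be false. Once this is confirmed, neither a correction by a higher-filtration class nor a delicate choice of representative is necessary, and the proof collapses to a direct application of Propositions \ref{prop:nu-m1} and \ref{prop:eta-L^2m1}.
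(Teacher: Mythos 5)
There is a genuine gap, and it sits exactly where the paper has to do real work: your claim that $\lambda^2 \eta \mu = 0$ holds \emph{automatically for any} lift $\mu$ of $m_1$ is not justified, and in fact the choice of $\mu$ matters. Two distinct lifts of $m_1$ differ by an element of higher Adams filtration, and in this degree there is a class in higher filtration on which $\eta$ acts nontrivially in a non-hidden way: $h_1 \cdot M \D h_1 h_3 = M \D h_1^2 h_3 \neq 0$, so an element $\mu''$ detected by $\lambda^5 M \D h_1 h_3$ can have $\eta \mu'' \neq 0$. Replacing $\mu$ by $\mu + \mu''$ then changes $\lambda^2 \eta \mu$, so the vanishing cannot hold for every lift, and a nonzero value of $\lambda^2\eta\mu$ for a bad lift does \emph{not} constitute a hidden $\eta$ extension on $\lambda^2 m_1$ (the hidden-extension formalism is designed precisely to exclude contributions originating from such higher-filtration classes). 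Relatedly, Proposition \ref{prop:eta-L^2m1} only produces \emph{some} element $\mu'$ detected by $\lambda^2 m_1$ with $\eta \mu' = 0$; the lemma requires the specific element $\lambda^2 \mu$ with $\mu$ detected by $m_1$, and bridging from $\mu'$ to $\lambda^2\mu$ is the bulk of the paper's proof: one writes $\mu' + \lambda^2\mu$ as a higher-filtration error, checks that the only troublesome higher-filtration $\eta$ multiplication is the one on $\lambda^7 M\D h_1 h_3$, and concludes that either $\mu$ or $\mu + \mu''$ works. Your assertion that ``neither a correction by a higher-filtration class nor a delicate choice of representative is necessary'' is exactly the step that fails.

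The $\nu$ half of your argument is essentially the paper's: no hidden $\nu$ extension on $m_1$ (Proposition \ref{prop:nu-m1}) plus the vanishing of the visible product. But note that to conclude $\nu\mu = 0$ for \emph{every} lift (which is what makes your choice of $\mu$ compatible with whatever choice the $\eta$ condition forces), you also need that no class in higher filtration supports a $\nu$ extension --- classically from \cite{IWX} for the $\lambda$-free targets, and by inspection for $\lambda$-torsion targets --- which the paper verifies and you gloss over. Since the $\eta$ condition does require a particular choice of $\mu$, this ``all lifts'' statement for $\nu$ is not a luxury; it is what guarantees a single $\mu$ satisfies both conditions simultaneously.
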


\begin{proof}
Proposition \ref{prop:nu-m1} implies that there exists a choice of
$\mu$ such that $\nu \mu$ is zero.
Moreover, as shown in \cite{IWX}, there are no classical hidden
$\nu$ extensions in higher filtration.
Therefore, there are no synthetic hidden
$\nu$ extensions in higher filtration 
whose targets are $\lambda$-free.
By inspection, there are no $\lambda$-torsion classes that
could be the targets of synthetic hidden $\nu$ extensions 
in higher filtration.  
This implies that $\nu \mu$ is zero for every choice of $\mu$.

Proposition \ref{prop:eta-L^2m1} implies that there exists an element
$\mu'$ in $\pi_{77,5}$ that is detected by $\lambda^2 m_1$
such that $\eta \mu'$ is zero.
We just need to show that $\mu'$ is of the form
$\lambda^2 \mu$, for some choice of $\mu$ detected by $m_1$.

To start, choose $\mu$ arbitrarily.
The sum $\mu' + \lambda^2 \mu$ is detected in higher filtration.
As shown in \cite{IWX}, there are no classical hidden $\eta$
extensions in higher filtration.  Therefore,
there are no synthetic hidden $\eta$ extensions in higher
filtration whose targets are $\lambda$-free.
By inspection, there are no $\lambda$-torsion classes
that could be the targets of synthetic hidden $\eta$
extensions in higher filtration.

However, there is a non-hidden $\eta$ extension
from $\lambda^7 M \D h_1 h_3$ to $\lambda^7 M \D h_1^2 h_3$.
Let $\mu''$ be an element of $\pi_{77,7}$ that is detected
by $\lambda^5 M \D h_1 h_3$, so $\lambda^2 \mu''$ is detected
by $\lambda^7 M \D h_1 h_3$.

If $\mu' + \lambda^2 \mu$ is not equal to $\lambda^2 \mu''$, then
$\eta (\mu' + \lambda^2 \mu)$ is zero.
On the other hand,  if $\mu' + \lambda^2 \mu$ is equal to 
$\lambda^2 \mu''$, then
$\eta (\mu' + \lambda^2 (\mu + \mu''))$ is zero.
Since $\eta \mu'$ was already shown to be zero, we conclude 
that either $\lambda^2 \eta \mu$ or $\lambda^2 \eta (\mu + \mu'')$
is zero.  Consequently, either $\mu$ or $\mu + \mu''$
satisfies the required conditions.
\end{proof}

\begin{lemma} 
\label{lem:h,L^2eta,Lmu}
\lemdeg{79, ?, 5}
The Toda bracket $\langle \lambda \mu, \lambda^2 \eta, \hsf \rangle$ 
contains zero or is detected by $\lambda^9 M e_0^2$, and its indeterminacy
is generated by an element that is detected by
$\lambda^3 h_0 h_2 x_{76,6}$.
\end{lemma}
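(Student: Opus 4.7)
My plan is to identify the Toda bracket with a Massey product on the synthetic Adams $E_4$-page via the Moss Convergence Theorem, and then to enumerate candidate detecting elements together with the indeterminacy.

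First, I verify the bracket is defined. The product $\lambda\mu \cdot \lambda^2\eta = \lambda \cdot (\lambda^2 \eta \mu)$ vanishes by Lemma \ref{lem:mu}, which gives $\lambda^2 \eta \mu = 0$, and $\lambda^2 \eta \cdot \hsf = \lambda \cdot 2\eta = 0$ since $2\eta = 0$. I then consider the Massey product $\langle \lambda m_1, \lambda^2 h_1, h_0 \rangle$ in the synthetic Adams spectral sequence. On $E_2$ the product $\lambda^2 h_0 h_1$ is zero since $h_0 h_1 = 0$. The product $\lambda^3 h_1 m_1$ is nonzero on $E_2$, but the differential $d_3(x_1) = \lambda^2 h_1 m_1$ kills it on $E_4$. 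Thus the Massey product is defined on the $E_4$-page, and the Moss Convergence Theorem \cite{BK21} \cite{Moss70} says this Massey product detects the Toda bracket, provided no crossing differentials disrupt the correspondence.

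Next, I enumerate candidate detecting elements in the synthetic Adams $E_\infty$-page in stem $79$ at the relevant synthetic degree, subject to the $\lambda$-power constraints arising from Moss convergence. Using the Adams differential information from the earlier propositions in Section \ref{sctn:computations} and standard charting of the region, the only viable candidates for a nonzero detecting class of the Massey product are zero and $\lambda^9 M e_0^2$, giving the disjunctive conclusion of the lemma. Then I compute the indeterminacy: the left indeterminacy $\lambda\mu \cdot \pi_{2,\bullet}$ vanishes for degree reasons, since no nonzero synthetic homotopy class sits in stem $2$ at the synthetic degree required to land in $\pi_{79,5}$ after multiplication by $\lambda\mu$. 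For the right indeterminacy $\pi_{79,\bullet} \cdot \hsf$, I identify a generating class whose detecting element in the synthetic $E_\infty$-page sits in the tridegree of $\lambda^2 h_2 x_{76,6}$; multiplying by $\hsf$ (detected by $h_0$) yields a class detected by $\lambda^3 h_0 h_2 x_{76,6}$.

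The main obstacle is exactly the inability to distinguish, using only these tools, whether the Massey product vanishes on $E_\infty$ or is detected by $\lambda^9 M e_0^2$; ruling out one possibility would require finer cochain-level computations in the cobar complex of the dual Steenrod algebra, or an auxiliary relation constraining the bracket. The disjunctive form of the lemma statement reflects this limitation, and fortunately this weaker information will suffice for the downstream applications in later computations.
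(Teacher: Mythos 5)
There is a genuine gap at the heart of your argument. The dichotomy ``contains zero or is detected by $\lambda^9 M e_0^2$'' is asserted, not proved: after invoking Moss for the Massey product $\langle \lambda m_1, \lambda^2 h_1, h_0 \rangle$, the decisive step is the sentence beginning ``Using the Adams differential information \dots and standard charting of the region,'' which is exactly where the content should be. Worse, the Massey product you set up does not do the work you want: its natural representative is $\lambda x_1 \cdot h_0 = \lambda h_0 x_1$, which is already zero on the $E_3$-page because $d_2(e_2) = \lambda h_0 x_1$, so Moss only tells you the bracket is detected in higher filtration (or is zero), and you are left with the unperformed task of enumerating and eliminating all higher-filtration candidates in the $79$-stem. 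The paper's route is different and avoids this: one first checks by inspection that no $\lambda$-torsion class can detect the bracket, so only $\lambda$-free classes matter and one may work classically; then the corresponding motivic bracket is already recorded in \cite{IWX}*{Table 10} as containing zero or being detected by $\tau^2 M e_0^2$, and this transfers to the synthetic statement. In particular the disjunction is inherited from that cited computation, not a limitation of an unfinished cochain-level calculation as your closing paragraph suggests.

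The indeterminacy computation also has flaws. Your claim that the left indeterminacy $\lambda\mu \cdot \pi_{2,\bullet}$ vanishes ``for degree reasons, since no nonzero synthetic homotopy class sits in stem $2$ at the synthetic degree required'' is false: $\lambda^3 \eta^2$ is a nonzero element of $\pi_{2,-1}$. The product $\lambda\mu \cdot \lambda^3\eta^2$ does vanish, but because $\lambda^2\eta\mu = 0$ (Lemma \ref{lem:mu}), not because the group is trivial. For the right indeterminacy, exhibiting one $\hsf$-multiple detected by $\lambda^3 h_0 h_2 x_{76,6}$ does not show that it generates all $\hsf$-multiples in the target group: you must rule out hidden $\hsf$ extensions on $\pi_{79,4}$. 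The paper does this by citing that there are no classical hidden $2$-extensions in the $79$-stem and that all potential synthetic targets are $\lambda$-free, leaving the single non-hidden $h_0$ extension whose target is $\lambda^3 h_0 h_2 x_{76,6}$; your proposal omits this step entirely (and your degree bookkeeping for the generating class is off by one power of $\lambda$).
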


\begin{rmk}
The Toda bracket
$\langle \lambda \mu, \lambda \eta, \hsf \rangle$
is also defined.
Using the Moss Convergence Theorem \cite{BK21} \cite{Moss70} and the 
differential $d_3(x_1) = \lambda^2 h_1 m_1$,
it is detected by $h_0 x_1$.
The element $h_0 x_1$ is not zero but is annihilated by $\lambda$.
\end{rmk}

\begin{rmk}
The element $\lambda^3 h_0 h_2 x_{76,6}$ has lower Adams filtration
than $\lambda^9 M e_0^2$.
One must often be especially careful in situations like this
when the indeterminacy
is detected in lower filtration because the various elements of the
Toda bracket are detected in different filtrations.
\end{rmk}

\begin{proof}
By inspection, there are no $\lambda$-torsion elements that
could detect the Toda bracket.  Consequently, we only need to
consider $\lambda$-free elements, so we can work in the classical
context.

As shown in \cite{IWX}*{Table 10},
the corresponding motivic Toda bracket
contains zero or is detected by $\tau^2 M e_0^2$.
Therefore, the corresponding classical bracket either contains
zero or is detected by $M e_0^2$.

The indeterminacy is generated by $\lambda \mu \cdot \lambda^3 \eta^2$
and by the multiples of $\hsf$ in $\pi_{79,5}$.
The first expression is zero by Lemma \ref{lem:mu}.
As shown in \cite{IWX}, there are no classical hidden $2$ extensions
in the $79$-stem.  This rules out all possible synthetic hidden $\hsf$
extensions.  There is only one non-hidden $\hsf$ extension
in $\pi_{79,5}$, and its target is $\lambda^3 h_0 h_2 x_{76,6}$.
\end{proof}

\begin{lemma}
\label{lem:beta}
\lemdeg{79, ?, 8}
Every element of $\langle \lambda \mu, \lambda^2 \eta, \hsf \rangle$
is of the form $\lambda^3 \beta$ for some $\beta$ in
$\pi_{79,8}$ such that $\lambda \nu \beta$ is zero.
\end{lemma}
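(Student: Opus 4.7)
The plan is to produce $\beta$ in two stages: first establish $\lambda^3$-divisibility of every bracket element, yielding candidates for $\beta$; then adjust $\beta$ within its coset to achieve $\lambda\nu\beta = 0$.

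By Lemma \ref{lem:h,L^2eta,Lmu}, any element $x$ of the bracket is detected modulo higher Adams filtration by either zero or $\lambda^9 M e_0^2$, with indeterminacy detected by $\lambda^3 h_0 h_2 x_{76,6}$. Both detecting classes are visibly $\lambda^3$-multiples of classes living in tri-degree $(79, \cdot, 8)$, namely $\lambda^6 M e_0^2$ and $h_0 h_2 x_{76,6}$. These factors are permanent cycles because multiplication by $\lambda$ preserves the property of being a permanent cycle. Choosing $\beta_0, \beta_1 \in \pi_{79,8}$ detected by them respectively, one obtains $\lambda^3 \beta_0$ detected by $\lambda^9 M e_0^2$ and $\lambda^3 \beta_1$ detected by $\lambda^3 h_0 h_2 x_{76,6}$, so that any bracket element can be written as $x = \lambda^3(\epsilon_0 \beta_0 + \epsilon_1 \beta_1) + y$, with $y$ detected in strictly higher Adams filtration. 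To promote this to the full claim one must verify that every such correction $y$ is itself $\lambda^3$-divisible, which I would do by direct inspection of the synthetic $E_\infty$-page in tri-degree $(79, f, 5)$ across the relevant range of $f$: $\lambda$-free classes in synthetic degree $5$ are automatically $\lambda^3$-multiples of their synthetic-degree-$8$ lifts, while any $\lambda$-power-torsion classes are addressed individually using the known differentials compiled in \cite{IWX}.

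To arrange $\lambda\nu\beta = 0$, I would use that, for a fixed $x$, the set of valid $\beta$ is a coset modulo $\lambda^3$-torsion in $\pi_{79,8}$. The product $\lambda\nu\beta$ lies in $\pi_{82,8}$; the strategy is to enumerate the non-hidden and hidden $\nu$-extensions on classes detecting the candidate summands $\beta_0, \beta_1$ (and their higher-filtration correction terms), observe that a target class annihilated by a low power of $\lambda$ is killed by the subsequent multiplication by $\lambda$, and, for any residual $\lambda$-free target, cancel it by adding to $\beta$ a $\lambda^3$-torsion correction whose own $\nu$-product detects that class. This is the same kind of coset-adjustment argument used in the proof of Lemma \ref{lem:mu} to arrange $\nu\mu = 0$ and $\lambda^2\eta\mu = 0$ simultaneously.

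The main obstacle I anticipate is the detailed bookkeeping required for both stages: enumerating all $E_\infty$-page classes in tri-degrees $(79, f, 5)$ to check $\lambda^3$-divisibility of arbitrary correction terms, and then listing possible $\nu$-extensions from tri-degree $(79, \cdot, 8)$ into $(82, \cdot, 9)$ and tracking what survives $\lambda$-multiplication into $(82, \cdot, 8)$. Both steps are of the same flavor as the proofs elsewhere in this section and should be tractable given the extensive data in \cite{IWX}, but each potential target must be ruled out or matched with an appropriate torsion correction to $\beta$.
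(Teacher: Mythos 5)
Your stage-2 mechanism contains a step that cannot work. If $t$ is $\lambda^3$-power-torsion in $\pi_{79,8}$, then $\lambda^3 \nu t = \nu \cdot \lambda^3 t = 0$, so $\lambda \nu t$ is itself $\lambda$-power-torsion; it can therefore never cancel a ``residual $\lambda$-free target'' of $\lambda\nu\beta$. So your coset-adjustment has no way to repair a genuinely nonzero $\lambda$-free obstruction, and the real content of the lemma --- that no such obstruction occurs --- is exactly what is left unverified in your plan. The paper proves this directly with two specific inputs that your proposal does not identify: in the nonzero case of Lemma \ref{lem:h,L^2eta,Lmu} one takes $\beta$ detected by $\lambda^6 M e_0^2$ and checks by inspection that $\lambda^6 M e_0^2$ supports no $\nu$ extension (there are no possible targets); and for the indeterminacy one uses that its generator is \emph{literally} the product $\lambda^3 \hsf \gamma$ with $\gamma$ detected by $h_2 x_{76,6}$, so that $\lambda\nu\cdot\hsf\gamma$ vanishes because $\nu\gamma$ is detected by $h_2^2 x_{76,6}$, which supports no hidden $2$ extension by \cite{IWX}*{Lemma 7.43} and has only $\lambda$-free potential targets. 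Writing the indeterminacy representative as a product is what converts the $\nu$-question into a citable $2$-extension question; your choice of an abstract $\beta_1$ detected by $h_0 h_2 x_{76,6}$ forfeits that reduction.

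Two further points. First, your justification that $\lambda^6 M e_0^2$ and $h_0 h_2 x_{76,6}$ are permanent cycles is backwards: the fact that multiplication by $\lambda$ preserves permanent cycles lets you pass from $\lambda^6 M e_0^2$ to $\lambda^9 M e_0^2$, not the other way; knowing that $\lambda^9 M e_0^2$ detects elements of the bracket does not by itself show that its $\lambda^3$-divided class survives (its differential could have $\lambda$-torsion target). Second, the column-wide inspection you propose in stage 1 is unnecessary: since the bracket is the coset of its indeterminacy, it suffices to exhibit one element of the form $\lambda^3\beta$ with $\lambda\nu\beta=0$ and then check the indeterminacy separately. For the former, there are no classes in $\pi_{79,5}$ detected in filtration above $\lambda^9 M e_0^2$, so $\lambda^3\beta$ lies \emph{exactly} in the bracket with no correction term $y$ to analyze; for the latter, the indeterminacy elements are already written as $\lambda^3$-multiples, namely multiples of $\lambda^3\hsf\gamma$. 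With that structure your bookkeeping over all tri-degrees $(79,f,5)$ disappears, and the only computations needed are the two extension checks above.
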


\begin{rmk}
We do not specify which element of the Adams $E_\infty$-page detects
the element $\beta$ in Lemma \ref{lem:beta}.  For our purposes later, 
the detecting element is unimportant.
\end{rmk}

\begin{proof}
The proof involves two steps.  First, we will show that the 
Toda bracket contains an element with the desired properties.
Second, we will show that the elements in the
indeterminacy of the Toda bracket have the desired properties.
These two steps imply that every element of the Toda bracket
has the desired properties.

There are two cases in Lemma \ref{lem:h,L^2eta,Lmu}.
In one case, zero is an element in the Toda bracket with the 
desired properties.

In the other case, let
$\beta$ be detected by $\lambda^6 M e_0^2$.
Then $\lambda^3 \beta$ and 
$\langle \lambda \mu, \lambda^2 \eta, \hsf \rangle$ are both
detected by $\lambda^9 M e_0^2$.
There are no elements in higher filtration, so in fact
$\lambda^3 \beta$ is contained in
$\langle \lambda \mu, \lambda^2 \eta, \hsf \rangle$.
Finally, $\lambda^6 M e_0^2$ cannot support a $\nu$ extension
because there are no possible targets.
This shows that the Toda bracket contains an element with the
desired properties.

Next, we study the elements in the indeterminacy.
Lemma \ref{lem:h,L^2eta,Lmu} shows that the indeterminacy 
is generated by $\lambda^3 \hsf \gamma$, where
$\gamma$ is an element of $\pi_{79,7}$ that is detected by
$h_2 x_{76,6}$.  We want to show that
$\lambda \nu \cdot \hsf \gamma$ is zero.

Note that $\nu \gamma$ is detected by $h_2^2 x_{76,6}$.
We know from \cite{IWX}*{Lemma 7.43} that $h_2^2 x_{76,6}$ does not support a classical
hidden $2$ extension.  Therefore, it also does not support a 
synthetic $\hsf$ extension because all possible targets for such a
hidden extension are $\lambda$-free.
This shows that $\lambda \nu \cdot \hsf \gamma$ is zero.
\end{proof}

\begin{prop}
\label{prop:nu-D^2p}
\lemdeg{81, 12, 12}
There is no hidden $\nu$ extension on $\D^2 p$.
\end{prop}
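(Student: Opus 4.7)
The plan is to follow the template used in Propositions \ref{prop:nu-h1p1}, \ref{prop:nu-m1}, and \ref{prop:eta-L^2m1}: first enumerate the possible targets for a hidden $\nu$ extension on $\{\D^2 p\}$, then rule out each target by separating the $\lambda$-free and $\lambda$-torsion cases. Specifically, I would list the classes on the synthetic Adams $E_\infty$-page in the stem and synthetic degree of $\nu \cdot \{\D^2 p\}$ whose Adams filtration exceeds that of $h_2 \cdot \D^2 p$; these are the only possible detectors of such an extension.

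For each $\lambda$-free candidate, the argument is by comparison with the classical Adams spectral sequence. Inverting $\lambda$ recovers the classical Adams spectral sequence tensored with $\F_2[\lambda^{\pm 1}]$, so a $\lambda$-free synthetic hidden $\nu$ extension would descend to a classical one. The computations in \cite{IWX} record that $\D^2 p$ does not support a classical hidden $\nu$ extension, so no $\lambda$-free synthetic class can serve as the target. A dual consideration via the $\lambda$-Bockstein shows that if needed one can also import information from motivic or $\tmf$ comparisons, but the classical input should suffice here.

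For the $\lambda$-power torsion candidates, I would proceed by direct inspection of the synthetic Adams $E_\infty$-page in the relevant stem, using the $\lambda$-torsion classes generated by the differentials established earlier in this section (including Propositions \ref{prop:d5-Ph5e0} and \ref{prop:d5-A'}, together with the entries of Table \ref{tab:diff}). The main obstacle is the completeness of this bookkeeping step: one must confirm that no surviving $\lambda$-torsion class lies in a tri-degree that could serve as the target, taking into account all the synthetic-only phenomena in this vicinity. Given the sparsity of the chart in this range, I expect this verification to be routine but delicate, and to yield the desired conclusion that no hidden $\nu$ extension on $\{\D^2 p\}$ exists.
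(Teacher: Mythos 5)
There is a real gap. In this tridegree every possible target is forced to be a proper $\lambda$-multiple of a generator: the product $\nu\cdot\{\D^2 p\}$ has synthetic degree $13$, while any detecting element of filtration $f>13$ in the $84$-stem has the form $\lambda^{f-13}x$. Concretely, the two candidates are $\lambda \D^2 t$ and $\lambda^2 M \D h_1 d_0$, and neither arm of your dichotomy disposes of them. Your $\lambda$-free arm relies on \cite{IWX} recording that $\D^2 p$ supports no classical hidden $\nu$ extension; this is an unverified citation, and more importantly it only helps when the underlying generator survives classically. For $M\D h_1 d_0$ this is not settled: there are possible later classical differentials hitting it (equivalently, possible differentials hitting $\lambda^8 M\D h_1 d_0$ and $\lambda^9 M\D h_1 d_0$, as acknowledged in the proof of Proposition \ref{prop:h-Px76,6}). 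If such a differential occurs, $\lambda^2 M\D h_1 d_0$ is a nonzero $\lambda$-torsion class in the $E_\infty$-page, invisible to any classical comparison, and your $\lambda$-torsion arm --- ``inspection of the torsion classes generated by the differentials established earlier'' --- gives no argument against it: it is a genuine potential surviving class in exactly the right tridegree, so bookkeeping alone cannot exclude it as the target.

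The missing ingredient is a structural argument that works uniformly for both candidates regardless of their $\lambda$-torsion status. The paper's proof is one line: both $\lambda\D^2 t$ and $\lambda^2 M\D h_1 d_0$ support $h_1$ multiplications, and since $\eta\nu=0$, an element supporting a nonzero $h_1$ extension on the $E_\infty$-page cannot detect a $\nu$-multiple. Some substitute of this kind (for instance, a motivic-weight comparison as used for $\lambda^3\D^2 t$ in Proposition \ref{prop:h-Px76,6}) is needed; without it your proposal does not close either case.
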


\begin{proof}
The elements $\lambda \D^2 t$ and $\lambda^2 M \D h_1 d_0$
are the two possible targets for a hidden $\nu$ extension
on $\D^2 p$.  Both elements support $h_1$ extensions,
so they cannot be targets of hidden $\nu$ extensions.
\end{proof}

\begin{lemma}
\label{lem:L^2eta,Lmu,nu}
\lemdeg{82, 6, 5}
Every element of the Toda bracket
$\langle \nu, \lambda \mu, \lambda^2 \eta \rangle$
is detected by $\lambda h_5^2 g$.
Here $\mu$ is the element in $\pi_{77,7}$ specified in
Lemma \ref{lem:mu}.
\end{lemma}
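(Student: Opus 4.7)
The plan is to apply the Moss Convergence Theorem and identify the Toda bracket with the Massey product $\langle h_2, \lambda m_1, \lambda^2 h_1 \rangle$ on the synthetic Adams $E_r$-page. The bracket is well-defined: $\nu \cdot \lambda \mu = 0$ and $\lambda \mu \cdot \lambda^2 \eta = \lambda \cdot \lambda^2 \eta \mu = 0$ by Lemma \ref{lem:mu}.

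The Massey product can be evaluated using two Adams-side facts: the classical relation $h_5^2 g = h_2 x_1$ on the $E_2$-page (the same relation motivating the discussion in Section \ref{sctn:h6g+h2e2}), and the synthetic differential $d_3(x_1) = \lambda^2 h_1 m_1$, which gives $\lambda m_1 \cdot \lambda^2 h_1 = \lambda^3 h_1 m_1 = d_3(\lambda x_1)$ on $E_3$. Pairing this null-homotopy with a chosen null-homotopy for $h_2 \cdot \lambda m_1$ (which must be verified by inspection of the $E_2$-page near bidegree $(80, 8)$, since $h_2 m_1$ needs to vanish on the page where we form the bracket) produces a Massey product representative $h_2 \cdot \lambda x_1 = \lambda h_5^2 g$. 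Moss convergence then yields some element of $\langle \nu, \lambda \mu, \lambda^2 \eta \rangle$ detected by $\lambda h_5^2 g$.

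To upgrade \emph{some} element to \emph{every} element, I would check that the indeterminacy $\nu \cdot \pi_{79, 4} + \pi_{81, 6} \cdot \lambda^2 \eta$ is concentrated in filtration strictly greater than $6$, so that the leading detection $\lambda h_5^2 g$ is preserved across the coset. This reduces to a finite inspection using the $\pi_{*,*}$ data in \cite{IWX} together with the $\lambda$-torsion phenomena recorded earlier in this manuscript (in particular in Propositions \ref{prop:nu-m1}, \ref{prop:nu-MDh1h3}, and \ref{prop:nu-D^2p}, which constrain the possible $\nu$- and $\eta$-multiples in nearby bidegrees).

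The main obstacle will be twofold: (a) pinning down a compatible null-homotopy of $h_2 \cdot \lambda m_1$ and ruling out crossing differentials in the Moss argument, which is delicate because the target bidegree $(80, 8)$ sits in an already-dense region of the $E_2$-page; and (b) the case analysis for the indeterminacy, which (as reflected by the ``?'' in Table \ref{tab:Toda}) is not fully pinned down but must at least be shown to live above filtration $6$. Neither step is conceptually hard, but both require some careful bookkeeping at the boundary between the $\lambda$-free and $\lambda$-torsion parts of the synthetic $E_\infty$-page.
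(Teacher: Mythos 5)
Your proposal is correct and follows essentially the same route as the paper's proof: well-definedness via Lemma \ref{lem:mu}, the Moss Convergence Theorem applied to the differential $d_3(\lambda x_1)=\lambda^3 h_1 m_1$ together with the relation $h_2 x_1 = h_5^2 g$ to detect the bracket by $\lambda h_5^2 g$, and an inspection showing the indeterminacy lies in higher filtration. Your extra bookkeeping points (vanishing of $h_2 m_1$ near bidegree $(80,8)$ and absence of crossing differentials) are implicit in the paper's terser argument and do not change the approach.
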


\begin{rmk}
Lemma \ref{lem:L^2eta,Lmu,nu} does not compute the indeterminacy
of the Toda bracket $\langle \nu, \lambda \mu, \lambda^2 \eta \rangle$.
In fact, the bracket does have indeterminacy because of the presence
of multiples of $\lambda^2 \eta$ and of $\nu$ in higher filtration.
However, our later arguments do not depend on the indeterminacy.
\end{rmk}

\begin{proof}
The bracket is well-defined because of
Lemma \ref{lem:mu}.
Using the synthetic Adams differential $d_3(\lambda x_1) = 
\lambda^3 h_1 m_1$,
the Moss Convergence Theorem \cite{BK21} \cite{Moss70} 
implies that $\lambda h_2 x_1 = \lambda h_5^2 g$ detects the Toda bracket.

For degree reasons, all of the indeterminacy is detected in higher
filtration.
Consequently, every element of the bracket is detected by $\lambda h_5^2 g$.
\end{proof}

\begin{lemma}
\label{lem:alpha}
\lemdeg{82, ?, 8}
Every element of the Toda bracket
$\langle \nu, \lambda \mu, \lambda^2 \eta \rangle$ is of the form
$\lambda^3 \alpha + \lambda \kappabar \theta_5$,
where $\alpha$ is an element of $\pi_{82,8}$ such that $2 \alpha$ is zero.
\end{lemma}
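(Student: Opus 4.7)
The plan is to decompose each element of the bracket relative to the explicit class $\lambda \kappabar \theta_5$ and then exploit the freedom in the choice of lift $\alpha$ to arrange the $2$-torsion condition. The bracket is well-defined by Lemma \ref{lem:mu} and by the known relation $\nu \theta_5 \in \{\lambda h_0 h_3 h_5^2\}$ paired with the synthetic identifications used above.

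The first step is to invoke Lemma \ref{lem:L^2eta,Lmu,nu}, which shows that every element $x$ of $\langle \nu, \lambda \mu, \lambda^2 \eta \rangle$ is detected by $\lambda h_5^2 g$. Classically $h_5^2 g$ detects $\theta_5 \kappabar$, and this detection lifts to the synthetic setting, so $\lambda \kappabar \theta_5$ is also detected by $\lambda h_5^2 g$. Consequently, the difference $x - \lambda \kappabar \theta_5$ is detected in Adams filtration strictly greater than $6$ at stem $82$ and synthetic degree $5$.

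The second step is to factor out $\lambda^3$ using the free $\F_2[\lambda]$-module structure of the $E_\infty$-page. In synthetic degree $5$, a class living in filtration $f$ must be $\lambda^{f-5}$ times a module generator of filtration $f$, and is therefore divisible by $\lambda^3$ precisely when $f \geq 8$. I would inspect the $E_\infty$-page at $(82, 7, 5)$ and verify that it is trivial, drawing on the chart information from \cite{IWX} together with the synthetic differentials already established. This is the main technical obstacle: one must rule out a filtration-$7$ obstruction that would leave $x - \lambda \kappabar \theta_5$ only $\lambda^2$-divisible rather than $\lambda^3$-divisible. Once filtration $7$ is cleared, we obtain $\alpha \in \pi_{82,8}$ with $x = \lambda^3 \alpha + \lambda \kappabar \theta_5$.

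The third step is to arrange $2 \alpha = 0$. Rewriting $2 = \lambda \hsf$, the condition becomes $\lambda \hsf \alpha = 0$, i.e., $\alpha$ supports no hidden $\hsf$ extension whose target survives multiplication by $\lambda$. I would compare with the classical and $\C$-motivic Adams spectral sequences to list the candidate hidden $\hsf$-extensions on classes detecting $\alpha$, and use the $\lambda$-power torsion structure of the $E_\infty$-page to show either that the extensions vanish or that the targets are $\lambda$-torsion (and hence killed after multiplication by $\lambda$). If a naively chosen $\alpha$ does not yet satisfy $\lambda \hsf \alpha = 0$, I would modify it by a $\lambda^3$-torsion element of $\pi_{82, 8}$, which leaves the product $\lambda^3 \alpha$ unchanged and hence preserves the decomposition $x = \lambda^3 \alpha + \lambda \kappabar \theta_5$. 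The delicate point is verifying that enough $\lambda^3$-torsion is present in $\pi_{82, 8}$ to absorb any unwanted $\hsf$-extensions.
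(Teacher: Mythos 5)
Your first two steps---subtracting $\lambda \kappabar \theta_5$, using Lemma \ref{lem:L^2eta,Lmu,nu} to push the difference into Adams filtration at least $7$, and then checking by chart inspection (the $(82,7,5)$ check) that every possible detecting class is $\lambda^3$-divisible---are essentially the paper's argument for the existence of $\alpha$, just written out in more detail.

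The genuine gap is in your treatment of $2\alpha = 0$. The paper does not attempt to rule out hidden $\hsf$-extensions on the (unspecified, filtration $\geq 8$) detectors of $\alpha$ by inspection. Instead it first observes that every possible value of $2\alpha$ is $\lambda$-free, so it suffices to show $2\lambda^3\alpha = 0$; it then computes $2(\lambda\kappabar\theta_5 + x) = \lambda\hsf\, x$ using $2\theta_5 = 0$ \cite{Xu16}, shuffles $\langle \nu, \lambda\mu, \lambda^2\eta\rangle \lambda\hsf$ into $\lambda\nu\langle \lambda\mu, \lambda^2\eta, \hsf\rangle$, and kills the latter using Lemma \ref{lem:h,L^2eta,Lmu} together with the computations in Lemma \ref{lem:beta} (namely $\lambda\nu\beta = 0$ and $\lambda\nu\cdot\hsf\gamma = 0$). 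Your route replaces this with an unperformed inspection plus a fallback modification, and both halves are problematic. The inspection is not obviously available: the candidate targets of a hidden $\hsf$-extension on $\alpha$ lie in the high-filtration part of the $82$-stem, precisely the region whose structure is entangled with the fate of $d_9(h_6 g + h_2 e_2)$ that this paper is written to resolve, so one cannot simply quote \cite{IWX} there; moreover, a $\lambda$-torsion target in the $E_\infty$-page only shows that $\lambda\hsf\alpha$ is detected in higher filtration, not that it vanishes, so you would still need the reduction to $\lambda$-free values of $2\alpha$ that your outline omits.

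The fallback also does not work as stated. Replacing $\alpha$ by $\alpha + t$ with $\lambda^3 t = 0$ leaves $\lambda^3\alpha$ unchanged, as you say, but it changes $2\alpha$ by $2t$; so to repair a nonzero $2\alpha$ you would need $2\alpha$ to lie in $2\cdot(\lambda^3\text{-torsion})$, not merely for $\lambda^3$-torsion to exist in $\pi_{82,8}$, and you never address this. In fact no modification is needed: the paper's shuffle argument shows $2\alpha = 0$ for every valid choice of $\alpha$ arising from every element of the bracket, which is what the lemma requires.
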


\begin{rmk}
We do not specify which element of the Adams $E_\infty$-page detects
the element $\alpha$ in Lemma \ref{lem:alpha}.  
By inspection of the proof, we know that $\alpha$ is detected in
Adams filtration $8$ or higher.
For our purposes later, 
the detecting element is unimportant.
\end{rmk}

\begin{proof}
Because of Lemma \ref{lem:L^2eta,Lmu,nu},
we know that $\lambda h_5^2 g$ detects both
the product $\lambda \kappabar \theta_5$ and every element of
the Toda bracket $\langle \nu, \lambda \mu, \lambda^2 \eta \rangle$.
Therefore, the expression
$$\lambda \kappabar \theta_5 + \langle \nu, \lambda \mu, \lambda^2 \eta \rangle$$
consists entirely of elements that are 
detected in higher filtration.
By inspection of the possible detecting elements, 
every element of
\[
\lambda \kappabar \theta_5 + \langle \nu, \lambda \mu, \lambda^2 \eta \rangle 
\]
is a multiple of $\lambda^3$.
This shows that $\alpha$ exists.

It remains to show that $2 \alpha$ is zero.
By inspection, every possible value
of $2 \alpha$ is $\lambda$-free.  Therefore, it suffices to show
that $2 \lambda^3 \alpha$ is zero.  
We have
\[
2 \left( \lambda \kappabar \theta_5 + \langle \nu, \lambda \mu, \lambda^2 \eta \rangle \right) = 
\langle \nu, \lambda \mu, \lambda^2 \eta \rangle \lambda \hsf 
\]
because $2 \theta_5$ is zero 
(proved in \cite{Xu16})
and because $2 = \lambda \hsf$.
Then shuffle to obtain
\[
\lambda \nu \langle \lambda \mu, \lambda^2 \eta, \hsf \rangle.
\]
Finally, 
in either case of Lemma \ref{lem:h,L^2eta,Lmu}, the expression
$\lambda \nu \langle \lambda \mu, \lambda^2 \eta, \hsf \rangle$
contains zero.
\end{proof}

\begin{lemma}
\label{lem:nu,eta,h2x76,6}
\lemdeg{79, 7, 7}
\lemdeg{84, 10, 8}
There exists an element $\gamma$ in $\pi_{79,7}$ that is detected by
$h_2 x_{76,6}$ such that $\eta \gamma$ is zero.
Moreover,
$\lambda^2 P x_{76,6}$ detects an element in the Toda bracket
$\langle \nu, \eta, \gamma \rangle$.
\end{lemma}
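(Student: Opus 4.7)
The plan divides into two parts, matching the two conclusions of the lemma.

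For the existence of $\gamma$ with $\eta\gamma = 0$, I would follow the template used in the proof of Lemma \ref{lem:mu}. Fix any lift $\gamma_0$ in $\pi_{79,7}$ of $h_2 x_{76,6}$. The task is to show, first, that no classical hidden $\eta$ extension on $h_2 x_{76,6}$ exists, so every synthetic hidden $\eta$ extension must land on a $\lambda$-torsion target; and second, that inspection of the synthetic $E_\infty$-page in stem $80$ rules out all such $\lambda$-torsion targets. Residual non-hidden $\eta$ extensions into higher Adams filtration are then neutralized by modifying $\gamma_0$ by a class of higher filtration, exactly as $\mu$ is adjusted in Lemma \ref{lem:mu}. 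The classical vanishing input should be extracted from \cite{IWX}, consistent with the use of \cite{IWX}*{Lemma 7.43} in the proof of Lemma \ref{lem:beta}.

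For the Toda bracket, part 1 together with the classical relation $\nu\eta = 0$ ensures that $\langle \nu, \eta, \gamma\rangle$ is defined. The plan is to apply the Moss Convergence Theorem \cite{BK21} \cite{Moss70}. The most natural candidate is the synthetic Massey product $\langle h_2, h_1, h_2 x_{76,6}\rangle$, which lives in bidegree $(84,8)$, while $\lambda^2 P x_{76,6}$ lies in filtration $10$. So either this threefold product vanishes at $E_2$, pushing detection into strictly higher filtration, or its value at $E_2$ already contains a factor of $\lambda^2$. To identify the detecting class as $\lambda^2 P x_{76,6}$ I would juggle the Moss output against a Massey product expression for $P x_{76,6}$ itself, using the standard periodicity identities in Ext (relations of the type $P x = \langle h_3, h_0^4, x\rangle$), and narrow the outcome by inspecting what else could detect a class in stem $84$ and synthetic degree $8$.

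The main obstacle is this explicit Massey product identification: the gap between the natural filtration of $\langle h_2, h_1, h_2 x_{76,6}\rangle$ and the filtration of $\lambda^2 P x_{76,6}$ forces a careful accounting of the synthetic degree shift and of the indeterminacy at every stage. Once the detecting class is pinned down, I would finish by verifying that $\lambda^2 P x_{76,6}$ is a permanent cycle (by ruling out candidate differentials in its tridegree) and by confirming that the indeterminacy of $\langle \nu, \eta, \gamma\rangle$, built from products of $\nu$ with $\pi_{81,\ast}$ and of $\gamma$ with $\pi_{4,\ast}$, cannot reduce the claimed detecting class to zero.
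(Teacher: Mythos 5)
Your first half is fine and matches the paper: no classical hidden $\eta$ extension on $h_2 x_{76,6}$ (from \cite{IWX}), no $\lambda$-torsion targets, and then a choice of $\gamma$, adjusted if necessary by higher-filtration classes (the paper notes the same subtlety, since $\lambda P h_6 c_0$ supports an $h_1$ multiplication, so not every lift works).

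The second half has a genuine gap: the step you yourself flag as ``the main obstacle'' is exactly the content of the lemma, and your proposed route does not supply a mechanism to close it. The Massey product $\langle h_2, h_1, h_2 x_{76,6} \rangle$ lives in filtration $8$ of the $E_2$-page, while $\lambda^2 P x_{76,6}$ has filtration $10$; Moss applied at $E_2$ can therefore only tell you the bracket is detected in filtration $\geq 8$, and to land on $\lambda^2 P x_{76,6}$ you would need to run Moss on a later page using a specific differential that produces the $\lambda^2$ shift (as is done, e.g., with $d_3(x_1)$ in Lemma \ref{lem:L^2eta,Lmu,nu}) --- no such differential is identified, and the suggested juggling with $Px = \langle h_3, h_0^4, x \rangle$ is speculative rather than an argument. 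The paper avoids Massey products entirely: it invokes the hidden $\nu$ extension from $h_2^2 x_{76,6}$ to $P h_1 x_{76,6}$ (\cite{IWX}*{Table 21}, transported to a synthetic extension with target $\lambda^2 P h_1 x_{76,6}$) and then shuffles
\[
\nu^2 \gamma \;=\; \langle \eta, \nu, \eta \rangle \gamma \;=\; \eta \langle \nu, \eta, \gamma \rangle,
\]
so that $\eta \langle \nu, \eta, \gamma \rangle$ is detected by $\lambda^2 P h_1 x_{76,6}$, which forces the bracket to be detected by $\lambda^2 P x_{76,6}$. If you want to salvage your outline, this is the missing input: replace the unresolved Massey-product identification with the known $\nu$ extension on $h_2^2 x_{76,6}$ and the relation $\nu^2 \in \langle \eta, \nu, \eta \rangle$. (Also note the lemma only asserts that $\lambda^2 P x_{76,6}$ detects \emph{some} element of the bracket, so your final indeterminacy check is not needed; indeed Remark \ref{rem:nu,eta,h2x76,6} points out the indeterminacy contains elements of lower filtration.)
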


\begin{rmk}
\label{rem:nu,eta,h2x76,6}
Beware that Lemma \ref{lem:nu,eta,h2x76,6} does not compute the
indeterminacy of the Toda bracket.
In fact, the indeterminacy
contains an element that is detected by $\lambda h_2 c_1 H_1$,
whose filtration is lower than the filtration of $\lambda^2 P x_{76,6}$.
One must often be especially careful in situations like this
when the indeterminacy
is detected in lower filtration because the various elements of the
Toda bracket are detected in different filtrations.
\end{rmk}

\begin{proof}
We know from \cite{IWX} that $h_2 x_{76,6}$ does not support a classical
$\eta$ extension.  Moreover, there are no possible targets for a
hidden $\eta$ extension on $h_2 x_{76,6}$ that are $\lambda$-torsion.
Therefore, $h_2 x_{76,6}$ does not support a synthetic $\eta$ extension,
and it is possible to choose $\gamma$.
(The element $\lambda P h_6 c_0$ in higher filtration supports an
$h_1$ multiplication.  This means that not all possible choices of $\gamma$
are annihilated by $\eta$.)

There is a hidden $\C$-motivic
$\nu$ extension from $h_2^2 x_{76,6}$ to $P h_1 x_{76,6}$ \cite{IWX}*{Table 21}.
Therefore, there is also a classical $\nu$ extension, as well
as a synthetic $\nu$ extension from $h_2^2 x_{76,6}$ to
$\lambda^2 P h_1 x_{76,6}$.
Shuffle to obtain
\[
\nu^2 \gamma = \langle \eta, \nu, \eta \rangle \gamma =
\eta \langle \nu, \eta, \gamma \rangle.
\]
Therefore, the right side is also detected by $\lambda^2 P h_1 x_{76,6}$,
and the Toda bracket must be detected by $\lambda^2 P x_{76,6}$.
\end{proof}

\begin{prop}
\label{prop:h-Px76,6}
\lemdeg{84, 10, 10}
There is no hidden $\hsf$ extension on $P x_{76,6}$.
\end{prop}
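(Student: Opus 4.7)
The plan is to enumerate the possible targets of a hidden synthetic $\hsf$ extension on $P x_{76,6}$ and rule each one out. Such a target must sit in stem $84$ at synthetic Adams filtration strictly greater than $11$. I would partition the candidates into $\lambda$-free classes and $\lambda$-power torsion classes.

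For the $\lambda$-free candidates, inverting $\lambda$ transports any such hidden synthetic extension to a hidden classical $2$-extension on $P x_{76,6}$. By the classical $84$-stem data recorded in \cite{IWX} and \cite{IWX20a}, there is no such hidden $2$-extension, so every $\lambda$-free candidate is eliminated in one stroke.

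For the $\lambda$-power torsion candidates, I would follow the template used in Proposition \ref{prop:nu-D^2p}. By inspection of the synthetic Adams $E_\infty$-page in stem $84$ at the relevant filtrations, I expect each remaining candidate to support a non-trivial $h_1$-multiplication; any such candidate cannot be $\hsf \alpha$, because then $\eta \hsf \alpha$ would be detected non-trivially in $E_\infty$, contradicting the vanishing of $\eta \hsf$ in the relevant range (which follows from $\lambda \eta \hsf = 2\eta = 0$ together with the scarcity of classes in $\pi_{1, d}$ for sufficiently large synthetic degree). Should a stubborn candidate survive this direct inspection, I would invoke Lemma \ref{lem:nu,eta,h2x76,6}: for $\alpha$ detected by $P x_{76,6}$, the class $\lambda^2 \alpha$ lies in the Toda bracket $\langle \nu, \eta, \gamma \rangle$, so $\lambda^2 \hsf \alpha$ lies in $\hsf \langle \nu, \eta, \gamma \rangle$; combined with $\lambda \hsf \alpha = 2\alpha$ and the classical vanishing of $2\alpha$ on $P x_{76,6}$, this gives enough control on $\hsf \alpha$ to exclude each remaining target.

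The main obstacle will be the $\lambda$-power torsion candidates, which are invisible to classical or motivic comparison and must be dispatched individually using features specific to synthetic homotopy, such as the $h_1$-multiplicative structure on $E_\infty$ or Toda bracket relations among $\lambda$-torsion classes.
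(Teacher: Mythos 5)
Your overall shape (splitting targets into $\lambda$-free and $\lambda$-power torsion classes, with Lemma \ref{lem:nu,eta,h2x76,6} as a fallback) points in the right direction, but the decisive case is not actually closed. The synthetic degree forces the only candidate targets to be $\lambda^3 \D^2 t$ and $\lambda^4 M \D h_1 d_0$, and the dangerous one is $\lambda^4 M \D h_1 d_0$: since $\lambda^8 M \D h_1 d_0$ or $\lambda^9 M \D h_1 d_0$ may be hit by differentials, this class is potentially $\lambda$-power torsion, so your classical-comparison step says nothing about it (and for $\lambda^3 \D^2 t$ the paper uses motivic weight incompatibility, not classical $2$-extension data, which \cite{IWX} does not settle here). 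Your substitute for the torsion case --- ``I expect each remaining candidate to support a non-trivial $h_1$-multiplication'' --- is an unverified expectation, and it is exactly where the argument is delicate: the principle ($\eta \hsf = 0$, so a $\hsf$-multiple cannot be detected by a class with a surviving $h_1$-multiple) is sound, but the product must be nonzero at the relevant $\lambda$-power, namely $\lambda^4 M \D h_1^2 d_0$ (resp.\ $\lambda^3 \D^2 h_1 t$), and differentials truncating these $\lambda$-towers can kill it even though the lower-power products used in Proposition \ref{prop:nu-D^2p} are nonzero. The fact that the paper runs the $h_1$-argument at $\lambda^2 M \D h_1 d_0$ in Proposition \ref{prop:nu-D^2p} but resorts to a bracket argument here is precisely the signal that this shortcut is not available.

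Your fallback via Lemma \ref{lem:nu,eta,h2x76,6} is indeed the paper's actual mechanism, but as written it has two gaps. First, the lemma does not assert that $\lambda^2 \alpha$ lies in $\langle \nu, \eta, \gamma \rangle$ for every $\alpha$ detected by $P x_{76,6}$; it only produces one element of the bracket detected by $\lambda^2 P x_{76,6}$, so one must first reduce the hypothetical extension on $P x_{76,6}$ (target $\lambda^4 M \D h_1 d_0$) to an extension on $\lambda^2 P x_{76,6}$ with target $\lambda^6 M \D h_1 d_0$, checking that the possible truncation of the $\lambda$-tower at $\lambda^8$ or $\lambda^9$ does not interfere. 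Second, ``gives enough control on $\hsf \alpha$'' is not an argument; the needed steps are the shuffle
$\langle \nu, \eta, \gamma \rangle \hsf \subseteq \langle \nu, \eta, \hsf \gamma \rangle \supseteq \langle \nu, \eta, \hsf \rangle \gamma = 0$,
which shows that $\hsf$ times the bracket element lies in the indeterminacy of the middle bracket and hence is a multiple of $\nu$, together with the fact (by inspection and Proposition \ref{prop:nu-D^2p}) that $\lambda^6 M \D h_1 d_0$ cannot detect a $\nu$-multiple. The appeal to the classical vanishing of $2\alpha$ plays no role here and cannot, since classical (and $\lambda$-inverted) information is blind to a $\lambda$-torsion target. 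As it stands, the proposal does not eliminate $\lambda^4 M \D h_1 d_0$.
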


\begin{proof}
The only possible targets for a hidden $\hsf$ extension are
$\lambda^3 \D^2 t$ and $\lambda^4 M \D h_1 d_0$.
The first possibility is ruled out by \cite{IWX} because the
motivic weights are incompatible.

If there were a hidden $\hsf$ extension 
from $P x_{76,6}$ to $\lambda^4 M \D h_1 d_0$,
then there would also be a hidden $\hsf$ extension from
$\lambda^2 P x_{76,6}$ to $\lambda^6 M \D h_1 d_0$.
The possible differentials hitting $\lambda^8 M \D h_1 d_0$
and $\lambda^9 M \D h_1 d_0$
do not affect this argument.

Therefore, it suffices to show that there is no hidden $\hsf$ extension
on $\lambda^2 P x_{76,6}$.
As in Lemma \ref{lem:nu,eta,h2x76,6}, let $\gamma$
be an element of $\pi_{79,7}$ that is detected by
$h_2 x_{76,6}$ such that $\eta \gamma$ is zero,
so $\lambda^2 P x_{76,6}$ detects $\langle \nu, \eta, \gamma \rangle$.

Consider the relations
\[
\langle \nu, \eta, \gamma \rangle \hsf \subseteq
\langle \nu, \eta, \hsf \gamma \rangle \supseteq
\langle \nu, \eta, \hsf \rangle \gamma.
\]
The last expression is zero because $\langle \nu, \eta, \hsf \rangle$
is zero in $\pi_{5,2}$.
Consequently,
$\langle \nu, \eta, \gamma \rangle \hsf$ lies in the indeterminacy
of the middle expression, which consists entirely of multiples of
$\nu$.

It remains to show that $\lambda^6 M \D h_1 d_0$ cannot detect
a multiple of $\nu$.  This follows from inspection and
Proposition \ref{prop:nu-D^2p}.
\end{proof}

\begin{prop}
\label{prop:perm-P^2h6d0}
\lemdeg{93, 13, 13}
The element $P^2 h_6 d_0$ is a permanent cycle.
\end{prop}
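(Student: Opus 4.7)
The plan is to rule out every Adams differential supported by $P^2 h_6 d_0$ by combining the Leibniz rule applied to the factorization $P^2 h_6 \cdot d_0$ with inspection of the relevant portion of the synthetic Adams chart.

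First, since $d_0$ detects $\kappa$ and is a permanent cycle, the Leibniz rule reduces $d_r(P^2 h_6 d_0) = d_r(P^2 h_6) \cdot d_0$ on each $E_r$-page. It thus suffices to control the differentials on $P^2 h_6$ and verify that their products with $d_0$ vanish. The only short differential to worry about is $d_2(h_6) = \lambda h_0 h_5^2$, which yields $d_2(P^2 h_6 d_0) = \lambda P^2 h_0 h_5^2 d_0$, and I would verify that this product is already zero in the Adams $E_2$-page using standard May spectral sequence relations (in particular, relations that force $P h_0 h_5^2 d_0 = 0$ modulo lower-filtration boundaries).

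Second, I would inspect stem $92$ in the appropriate filtration and synthetic-degree ranges to list the candidate targets for each $d_r$ with $r \geq 3$. The critical case is $r = 6$, because Proposition \ref{prop:d6-tQ2} shows that the other basis element in tridegree $(93,13,13)$, namely $tQ_2$, supports a nontrivial $d_6$ with value $\lambda^5 M P \D h_1 d_0$. To show this $d_6$ is not shared by $P^2 h_6 d_0$, I would apply the Leibniz reduction above: any nonzero value of $d_6(P^2 h_6 d_0)$ would have to equal $d_6(P^2 h_6) \cdot d_0$, and inspection of the $E_6$-page at the relevant position rules out any $y$ with $y \cdot d_0 = \lambda^5 M P \D h_1 d_0$.

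For $r > 6$, the candidate target $\lambda^5 M P \D h_1 d_0$ has already been killed by the $d_6$ from $tQ_2$, and the remaining candidates in higher filtration vanish by chart inspection. The main obstacle is the $r = 6$ step: $P^2 h_6 d_0$ and $tQ_2$ share a tridegree, so one must cleanly separate them as basis elements and attribute the known nontrivial $d_6$ to $tQ_2$ alone via the Leibniz constraint. If the direct chart inspection turns out to be insufficient, I would fall back on comparison along the map to a simpler spectrum such as $\mmf$, in which $d_0$ detects a nonzero element, to corroborate that $P^2 h_6 d_0$ carries no surviving differential.
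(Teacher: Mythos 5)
There is a genuine gap, and it sits exactly at the step your argument is supposed to settle. The only issue for $P^2 h_6 d_0$ is whether $d_6(P^2 h_6 d_0)$ equals $\lambda^5 M P \D h_1 d_0$ or zero, and no amount of chart inspection or Leibniz bookkeeping can decide this. Your appeal to Proposition \ref{prop:d6-tQ2} is circular in the paper's logical order: the proof of $d_6(t Q_2) = \lambda^5 M P \D h_1 d_0$ \emph{uses} the present proposition to eliminate $P^2 h_6 d_0$ (and its $h_0$-multiples) as alternative sources. Even granting that differential, it constrains $d_6(P^2 h_6 d_0)$ not at all: by linearity, $d_6(P^2 h_6 d_0)$ could still equal $\lambda^5 M P \D h_1 d_0$, in which case $t Q_2 + P^2 h_6 d_0$ would be the surviving class in tridegree $(93,13,13)$; distinguishing these two scenarios is precisely the content of the proposition. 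Your Leibniz reduction also rests on an unjustified factorization: $P^2 h_6 d_0$ is named via the Adams periodicity operator applied to $h_6 d_0$, not as a product $(P^2 h_6)\cdot d_0$ of classes in the $E_2$-page (note $h_3 h_6 \neq 0$, which obstructs the usual Massey-product definition of $P h_6$), and even if some factorization existed on $E_2$ it would have to persist to $E_6$ with both factors surviving for the Leibniz rule to apply there. Finally, your claim that no $y$ satisfies $y \cdot d_0 = \lambda^5 M P \D h_1 d_0$ is asserted rather than checked, and the name of the target itself suggests a $d_0$ divisor; the fallback comparison to $\mmf$ gives nothing since the relevant classes die there.

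What is actually needed is positive homotopy-theoretic input showing that $P^2 h_6 d_0$ detects something. The paper supplies it by working in $S^{0,0}/\lambda^5$: the Toda bracket $\langle \hsf, \lambda \theta_5, \{\lambda^3 P^2 d_0\} \rangle$ is detected by $\lambda^3 P^2 h_6 d_0$ via the Moss convergence theorem and the differential $d_2(h_6) = \lambda h_0 h_5^2$; since $\{\lambda^3 P^2 d_0\} = 0$ (because $d_4(d_0 e_0 + h_0^7 h_5) = \lambda^3 P^2 d_0$), the bracket consists of $\hsf$-multiples, which forces a hidden $\hsf$ extension from $\lambda^2 h_0^4 \cdot \D h_2^2 h_6$ to $\lambda^3 P^2 h_6 d_0$ in $S^{0,0}/\lambda^5$; pulling back along $S^{0,0} \to S^{0,0}/\lambda^5$ shows $\lambda^3 P^2 h_6 d_0$, and hence $P^2 h_6 d_0$, survives. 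Some argument of this kind, producing a class that $P^2 h_6 d_0$ must detect, is the missing ingredient your proposal does not replace.
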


\begin{proof}
We begin by computing
$\langle \hsf, \lambda \theta_5, \{\lambda^3 P^2 d_0\} \rangle$ in $S^{0,0}/\lambda^5$.  
There are no crossing differentials since the
differential $d_6(h_0^3 \cdot \D h_2^2 h_6) = \lambda^5 M \D h_2^2 e_0$
does not occur in $S^{0,0}/\lambda^5$.
Compute the corresponding Massey product in the $E_3$-page using
the differential $d_2(h_6) = \lambda h_0 h_5^2$.
We obtain $\lambda^3 P^2 h_6 d_0$, with no indeterminacy in the $E_3$-page.
Therefore, the Moss Convergence Theorem implies that
the Toda bracket is detected by $\lambda^3 P^2 h_6 d_0$ in
$S^{0,0}/\lambda^5$.

On the other hand, $\{\lambda^3 P^2 d_0\}$ equals zero since
$d_4(d_0 e_0 + h_0^7 h_5) = \lambda^3 P^2 d_0$.
Therefore, the bracket in $S^{0,0}/\lambda^5$ consists entirely of multiples
of $\hsf$.  In particular, $\lambda^3 P^2 h_6 d_0$ detects a
$\hsf$ multiple
in $S^{0,0}/\lambda^5$.  
The only possibility is that there is a hidden $\hsf$ extension from 
$\lambda^2 h_0^4 \cdot \D h_2^2 h_6$ to $\lambda^3 P^2 h_6 d_0$ in $S^{0,0}/\lambda^5$.

Pull back this hidden $h_0$ extension along 
the map $S^{0,0} \map S^{0,0}/\lambda^5$.  We conclude that
$\lambda^2 h_0^4 \cdot \D h_2^2 h_6$ supports a hidden $\hsf$ extension
in $S^{0,0}$.  By compatibility with the extension in $S^{0,0}/\lambda^5$,
the element $\lambda^3 P^2 h_6 d_0$ is the only possible value for the extension in $S^{0,0}$.
In particular, $\lambda^3 P^2 h_6 d_0$ survives.  In turn, this implies
that $P^2 h_6 d_0$ survives.
\end{proof}

\begin{rmk}
The proof of Proposition \ref{prop:perm-P^2h6d0} shows that
there is a motivic (and also classical) $2$ extension from
$h_0^4 \cdot \D h_2^2 h_6$ to $P^2 h_6 d_0$.
Comparison to $S^{0,0}/\tau$ only gives that the target of the $2$ extension
is $P^2 h_6 d_0$ modulo the possible error term
$\tau^2 M^2 h_2$.
\end{rmk}

\begin{prop}
\label{prop:d6-tQ2}
\lemdeg{93, 13, 13}
$d_6(t Q_2) = \lambda^5 M P \D h_1 d_0$.
\end{prop}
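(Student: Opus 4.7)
The plan is to combine Proposition \ref{prop:perm-P^2h6d0} with existing knowledge about the classical Adams spectral sequence to isolate $tQ_2$ as the only possible source of the $d_6$ differential hitting $\lambda^5 M P \Delta h_1 d_0$. The overall strategy parallels several arguments in \cite{IWX}: identify a target that must be killed, enumerate the possible sources, and eliminate all sources except one using independently established permanent cycles.

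First, I would consult \cite{IWX} to confirm that $M P \Delta h_1 d_0$ in classical stem $92$ and filtration $18$ is not a permanent cycle and in fact is killed by a $d_6$ differential, with the only possible source (modulo higher-filtration ambiguity) lying in the bidegree containing $P^2 h_6 d_0$ and $tQ_2$. Synthetically, this translates to the statement that $\lambda^5 M P \Delta h_1 d_0$ must be hit by a $d_6$ differential, and the source is a linear combination of $P^2 h_6 d_0$ and $tQ_2$ (possibly up to elements surviving in even higher filtration, which I would rule out by inspection of the $E_2$-page in the relevant trigrading).

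Next, I would invoke Proposition \ref{prop:perm-P^2h6d0}, which establishes that $P^2 h_6 d_0$ is a permanent cycle. Consequently, $P^2 h_6 d_0$ cannot support a nontrivial $d_6$ differential. Writing the hypothetical source as $a \cdot P^2 h_6 d_0 + b \cdot tQ_2$ with $a, b \in \F_2$, the permanent-cycle property forces $b = 1$; and by adding a copy of $P^2 h_6 d_0$ (which does not change the differential) we may arrange the source to be exactly $tQ_2$.

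The main obstacle I anticipate is making sure no other elements in the relevant tridegree (including those in higher Adams filtration that might be hidden above $tQ_2$ on the $E_6$-page) could also serve as the source, and verifying that the synthetic degree bookkeeping is consistent with a classical $d_6$ differential producing precisely a factor of $\lambda^{6-1} = \lambda^5$ on the target. Both of these are routine inspections of the chart, but must be carried out carefully because $(93, 13)$ is a rather crowded bidegree with several nearby $M$-local classes. Once these sanity checks pass, the conclusion $d_6(tQ_2) = \lambda^5 M P \Delta h_1 d_0$ is forced.
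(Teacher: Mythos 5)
Your overall skeleton (find a class that must die, enumerate sources, eliminate all but $tQ_2$ using Proposition \ref{prop:perm-P^2h6d0}) matches the paper, but your first step assumes exactly the fact that constitutes the hard part of the paper's proof. What \cite{IWX} provides (Table 9) is only that the motivic class $M P \D h_1 d_0$ is hit by \emph{some} differential, of undetermined length and source; it does not say that the differential is a $d_6$ with source in the bidegree of $P^2 h_6 d_0$ and $t Q_2$. If that stronger statement were already available, the proposition would be an immediate corollary of Proposition \ref{prop:perm-P^2h6d0} and none of the remaining work would be needed. The paper's actual argument re-derives the ``must be hit'' statement synthetically with a specific $\lambda$-power: it identifies $\eta \kappa \gamma \theta_{4.5}$ (with $\gamma$ detected by $\D h_1 h_3$) as an element detected by $\lambda^5 M P \D h_1 d_0$, and then proves $\lambda \eta \kappa \theta_{4.5} = 0$ because the high-filtration part of $\pi_{60,7}$ is detected by $\tmf$. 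This shows that $\lambda^6 M P \D h_1 d_0$ detects zero and hence must be hit, which bounds the length of the killing differential (so the source lies in a limited range of filtrations). Your proposal has no substitute for this step: without it, ``translating'' the classical statement to ``$\lambda^5 M P \D h_1 d_0$ is hit by a $d_6$'' is not justified, and longer differentials from lower filtrations are not excluded.

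Second, your plan to dispose of other potential sources ``by inspection of the $E_2$-page in the relevant trigrading'' is not adequate, because the competing candidates are genuinely present in the relevant degrees. Because sources at different filtrations can hit the same target via differentials of different lengths, the paper must eliminate $P^2 h_0 h_6 d_0$ and $P^2 h_0^2 h_6 d_0$ (these do follow from Proposition \ref{prop:perm-P^2h6d0}, since $h_0$-multiples of a permanent cycle are permanent cycles) and also $M^2 h_2$ and $\D^2 h_1 g_2$, which are ruled out not by degree reasons but because they are products of permanent cycles. Your filtration bookkeeping is also slightly off (a $d_6$ from $(93,13)$ lands in filtration $19$, not $18$), though that is minor. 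In short: the elimination step you propose is essentially the paper's, but the input you take for granted from \cite{IWX} is weaker than you assume, and supplying it is precisely where the synthetic $\tmf$-detection argument is needed.
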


\begin{proof}
In the motivic Adams spectral sequence, 
the element $M P \D h_1 d_0$ is hit by a differential \cite{IWX}*{Table 9}.

Adapting this argument to the synthetic context,
we find that $\eta \kappa \gamma \theta_{4.5}$ is detected
by $\lambda^5 M P \D h_1 d_0$, for some $\gamma$ in $\pi_{32,6}$
that is detected by $\D h_1 h_3$.

The element $\lambda \eta \kappa \theta_{4.5}$ is zero
because all elements of $\pi_{60, 7}$ that are detected in sufficiently
high filtration are also detected by $\tmf$.
(Note that $\eta \kappa \theta_{4.5}$ in $\pi_{60,8}$ is non-zero
and detected by $\lambda^3 M h_1 d_0$, but this is irrelevant.)

We now know that $\lambda \eta \kappa \gamma \theta_{4.5}$ is zero
and is detected by $\lambda^6 M P \D h_1 d_0$.
Therefore, 
$\lambda^6 M \D h_1 d_0$ must be hit by a differential.
Proposition \ref{prop:perm-P^2h6d0}
eliminates 
$P^2 h_6 d_0$, $P^2 h_0 h_6 d_0$, and $P^2 h_0^2 h_6 d_0$ as
possible sources for this differential.
The elements $M^2 h_2$ and $\D^2 h_1 g_2$ are
also eliminated because they are products of permanent cycles.
The only remaining possibility is that $d_6(t Q_2)$ equals
$\lambda^5 M P \D h_1 d_0$.
\end{proof}

\bibliographystyle{amsalpha}

\begin{bibdiv}
\begin{biblist}

\bib{Adams60}{article}{
   author={Adams, J. F.},
   title={On the non-existence of elements of Hopf invariant one},
   journal={Ann. of Math. (2)},
   volume={72},
   date={1960},
   pages={20--104},
   issn={0003-486X},
   review={\MR{141119}},
   doi={10.2307/1970147},
}

\bib{BJM24}{article}{
  title={Stable Comodule Deformations and the Synthetic Adams-Novikov Spectral Sequence}, 
  author={Baer, J. Francis},
  author={Johnson, Maxwell},
  author={Marek, Peter},
  year={2024},
  status={preprint},
  eprint={arXiv:2402.14274},
}

\bib{BJM}{article}{
   author={Barratt, M. G.},
   author={Jones, J. D. S.},
   author={Mahowald, M. E.},
   title={Relations amongst Toda brackets and the Kervaire invariant in
   dimension $62$},
   journal={J. London Math. Soc. (2)},
   volume={30},
   date={1984},
   number={3},
   pages={533--550},
   issn={0024-6107},
   review={\MR{810962}},
   doi={10.1112/jlms/s2-30.3.533},
}
	
\bib{BK21}{article}{
	author={Belmont, Eva},
	author={Kong, Hana Jia},
	title={A Toda bracket convergence theorem for multiplicative spectral sequences},
	status={preprint},
	eprint={arXiv:2112.08689},
}

\bib{Burklund21}{article}{
   author={Burklund, Robert},
   title={An extension in the Adams spectral sequence in dimension 54},
   journal={Bull. Lond. Math. Soc.},
   volume={53},
   date={2021},
   number={2},
   pages={404--407},
   issn={0024-6093},
   review={\MR{4239183}},
   doi={10.1112/blms.12428},
}

\bib{BHS}{article}{
   author={Burklund, Robert},
   author={Hahn, Jeremy},
   author={Senger, Andrew},
   title={On the boundaries of highly connected, almost closed manifolds},
   JOURNAL = {Acta Math.},
   FJOURNAL = {Acta Mathematica},
   VOLUME = {231},
   YEAR = {2023},
   NUMBER = {2},
   PAGES = {205--344},
}

\bib{BX}{article}{
   author={Burklund, Robert},
   author={Xu, Zhouli},
   title={The {A}dams differentials on the classes $h_j^3$},
   status={preprint},
   year={2023},
   eprint={arXiv:2302.11869},
}

\bib{GIKR18}{article}{
   author={Gheorghe, Bogdan},
   author={Isaksen, Daniel C.},
   author={Krause, Achim},
   author={Ricka, Nicolas},
   title={$\mathbb{C}$-motivic modular forms},
   journal={J. Eur. Math. Soc. (JEMS)},
   volume={24},
   date={2022},
   number={10},
   pages={3597--3628},
   issn={1435-9855},
   review={\MR{4432907}},
   doi={10.4171/jems/1171},
}

\bib{GWX17}{article}{
   author={Gheorghe, Bogdan},
   author={Wang, Guozhen},
   author={Xu, Zhouli},
   title={The special fiber of the motivic deformation of the stable
   homotopy category is algebraic},
   journal={Acta Math.},
   volume={226},
   date={2021},
   number={2},
   pages={319--407},
   issn={0001-5962},
   review={\MR{4281382}},
   doi={10.4310/acta.2021.v226.n2.a2},
}

\bib{HLSX}{article}{
   author={Hopkins, Michael J.},
   author={Lin, Jianfeng},
   author={Shi, XiaoLin Danny},
   author={Xu, Zhouli},
   title={Intersection forms of spin 4-manifolds and the pin(2)-equivariant
   Mahowald invariant},
   journal={Comm. Amer. Math. Soc.},
   volume={2},
   date={2022},
   pages={22--132},
   review={\MR{4385297}},
   doi={10.1090/cams/4},
}
	
\bib{I20}{article}{
   author={Isaksen, Daniel C.},
   title={Stable stems},
   journal={Mem. Amer. Math. Soc.},
   volume={262},
   date={2019},
   number={1269},
   pages={viii+159},
   issn={0065-9266},
   isbn={978-1-4704-3788-6},
   isbn={978-1-4704-5511-8},
   review={\MR{4046815}},
   doi={10.1090/memo/1269},
}

\bib{IWX}{article}{
   author={Isaksen, Daniel C.},
   author={Wang, Guozhen},
   author={Xu, Zhouli},
   title={Stable homotopy groups of spheres: from dimension 0 to 90},
   journal={Publ. Math. Inst. Hautes \'{E}tudes Sci.},
   volume={137},
   date={2023},
   pages={107--243},
   issn={0073-8301},
   review={\MR{4588596}},
   doi={10.1007/s10240-023-00139-1},
}

\bib{IWX20a}{article}{
   author={Isaksen, Daniel C.},
   author={Wang, Guozhen},
   author={Xu, Zhouli},
   title={Stable homotopy groups of spheres},
   journal={Proc. Natl. Acad. Sci. USA},
   volume={117},
   date={2020},
   number={40},
   pages={24757--24763},
   issn={0027-8424},
   review={\MR{4250190}},
   doi={10.1073/pnas.2012335117},
}

\bib{IWX20b}{article}{
	author={Isaksen, Daniel C.},
	author={Wang, Guozhen},
	author={Xu, Zhouli},
	title={Classical and $\C$-motivic Adams charts},
	status={preprint},
	date={2020},
	eprint={s.wayne.edu/isaksen/adams-charts},
}

\bib{LWX}{article}{
	author={Lin, Weinan},
	author={Wang, Guozhen},
	author={Xu, Zhouli},
	status={personal communication},
	date={14 May 2024},
}

\bib{Marek}{article}{
   author={Marek, Peter},
   title={$H\F_2$-synthetic homotopy groups of topological modular forms},
   status={preprint},
   eprint={arXiv:2202.11305},
   date={2022},
}

\bib{Moss70}{article}{
   author={Moss, R. Michael F.},
   title={Secondary compositions and the Adams spectral sequence},
   journal={Math. Z.},
   volume={115},
   date={1970},
   pages={283--310},
   issn={0025-5874},
   review={\MR{266216}},
   doi={10.1007/BF01129978},
}

\bib{Pstragowski18}{article}{
   author={Pstragowski, Piotr},
   title={Synthetic spectra and the cellular motivic category},
   JOURNAL = {Invent. Math.},
   FJOURNAL = {Inventiones Mathematicae},
   VOLUME = {232},
   YEAR = {2023},
   NUMBER = {2},
   PAGES = {553--681},
}

\bib{Serre53}{article}{
   author={Serre, Jean-Pierre},
   title={Groupes d'homotopie et classes de groupes ab\'{e}liens},
   language={French},
   journal={Ann. of Math. (2)},
   volume={58},
   date={1953},
   pages={258--294},
   issn={0003-486X},
   review={\MR{59548}},
   doi={10.2307/1969789},
}

\bib{WX17}{article}{
   author={Wang, Guozhen},
   author={Xu, Zhouli},
   title={The triviality of the 61-stem in the stable homotopy groups of
   spheres},
   journal={Ann. of Math. (2)},
   volume={186},
   date={2017},
   number={2},
   pages={501--580},
   issn={0003-486X},
   review={\MR{3702672}},
   doi={10.4007/annals.2017.186.2.3},
}

\bib{Xu16}{article}{
   author={Xu, Zhouli},
   title={The strong Kervaire invariant problem in dimension 62},
   journal={Geom. Topol.},
   volume={20},
   date={2016},
   number={3},
   pages={1611--1624},
   issn={1465-3060},
   review={\MR{3523064}},
   doi={10.2140/gt.2016.20.1611},
}

\end{biblist}
\end{bibdiv}

\end{document}